\numberwithin{equation}{section}
\newcounter{Main}
\theoremstyle{plain} 
\theoremstyle{definition} 
\newtheorem{Def}{Definition}[section] 
\newtheorem{Def,Thm}[Def]{Definition and theorem} 
\newtheorem{Def,Prop}[Def]{Proposition-definition} 
\newtheorem{Not}[Def]{Notation}
\theoremstyle{plain} 
\newtheorem{Prop}[Def]{Proposition} 
\newtheorem{Lem}[Def]{Lemma} 
\newtheorem{Thm}[Def]{Theorem} 
\newtheorem{Cor}[Def]{Corollary} 
\theoremstyle{remark} 
\newtheorem{Rq}[Def]{Remark} 
\renewcommand{\cal}[1]{\mathcal{#1}}
\newcommand{\bb}[1]{\mathbb{#1}}
\newcommand{\F}{\mathbb{F}}
\newcommand{\Z}{\mathbb{Z}}
\newcommand{\GL}{\mathrm{GL}}
\newcommand{\SL}{\mathrm{SL}}
\newcommand{\spec}{\mathrm{Spec}}
\newcommand{\pic}{\mathrm{Pic}}
\newcommand{\su}{\mathrm{SU}}
\newcommand{\stab}{\mathrm{Stab}}
\newcommand{\tr}{T}
\newcommand{\gal}{\mathrm{Gal}}
\newcommand{\q}{\mathfrak{q}}
\newcommand{\bk}{\mathfrak{b}}
\newcommand{\p}{\mathfrak{p}}
\newcommand{\rf}{\mathfrak{r}}
\newcommand{\cf}{\mathfrak{c}}
\newcommand{\id}{\mathrm{id}}
\title{Quotients of the Bruhat-Tits tree by arithmetic subgroups of special unitary groups} 
\author{Luis Arenas-Carmona, Claudio Bravo, Benoit Loisel, Giancarlo Lucchini Arteche} 
\date{}
\begin{document} 

\maketitle

\begin{abstract}
Let $K$ be the function field of a curve $C$ over a field $\bb F$ of either odd or zero characteristic. Following the work by Serre and Mason on $\SL_2$, we study the action of arithmetic subgroups of $\mathrm{SU}(3)$ on its corresponding Bruhat-Tits tree associated to a suitable completion of $K$. More precisely, we prove that the quotient graph ``looks like a spider'', in the sense that it is the union of a set of cuspidal rays (the ``legs''), parametrized by an explicit Picard group, that are attached to a connected graph (the ``body''). We use this description in order to describe these arithmetic subgroups as amalgamated products and study their homology. In the case where $\bb F$ is a finite field, we use a result by Bux, K\"ohl and Witzel in order to prove that the ``body'' is a finite graph, which allows us to get even more precise applications.\\

\textbf{MSC codes:} primary 20E08, 20H25, 14H05; secondary 20J06, 20G30, 11F75.\\
\textbf{Keywords:} Algebraic function fields, arithmetic subgroups, Bruhat-Tits trees, quotient graphs, special unitary groups.
\end{abstract}

\section{Introduction}\label{sec intro}

Let $C$ be a smooth, projective, geometrically integral curve over a field $\F$ and let $K$ be its function field. In \cite{S}, Serre considers the action of $\SL_2(K)$ on the Bruhat-Tits tree, as well as the action of some arithmetic subgroups, such as $\SL_2(A)$, where $A$ is the ring of functions on $C$ that are regular ouside a closed point $P$. In order to study these groups, Serre gave the following description of the corresponding quotient graphs.

\begin{Thm} \cite[Ch. II, Th. 9]{S}\label{teo serre quot}
Let $X$ be the local Bruhat-Tits tree defined by the group $\text{SL}_2$ at the completion $K_P$ associated to the valuation induced by $P$. Then, the graph $\overline{X}=\mathrm{SL}_2(A) \backslash X$ is combinatorially finite, i.e. it is obtained by attaching a finite number of infinite half lines, called cusp rays, to a certain finite graph $Y$. Moreover, the set of such cusp rays is indexed by the elements of the Picard group $\mathrm{Pic}(A)=\mathrm{Pic}(\mathcal{C})/\langle P\rangle$.
\end{Thm}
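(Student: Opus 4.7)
My plan is to use the classical dictionary between the Bruhat--Tits tree $X$ of $\SL_2(K_P)$ and homothety classes of rank-$2$ $\mathcal{O}_P$-lattices in $K_P^2$, combined with an adelic/moduli interpretation of the quotient by $\SL_2(A)$ in terms of rank-$2$ vector bundles on $C$ with trivial determinant.

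First, I identify the vertices of $X$ with classes $[L]$ of rank-$2$ $\mathcal{O}_P$-lattices $L\subset K_P^2$ and edges with chains $\pi L \subsetneq L' \subsetneq L$; the group $\SL_2(A)$ acts via $A\hookrightarrow K_P$. Strong approximation for $\SL_2$ over the global field $K$ identifies the orbit set
\[
\SL_2(A)\backslash\SL_2(K_P)/\SL_2(\mathcal{O}_P) \;\simeq\; \SL_2(K)\backslash \SL_2(\mathbb{A}_K)/\textstyle\prod_v \SL_2(\mathcal{O}_v),
\]
and by Weil uniformization this right-hand side classifies isomorphism classes of rank-$2$ vector bundles $E$ on $C$ with $\det E\cong\mathcal{O}_C$. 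An analogous statement for edges (using an Iwahori subgroup at $P$) equips the whole quotient graph $\overline{X}$ with a bundle-theoretic meaning.

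Next, I describe the cusps. To each bundle $E$ I associate a maximal sub-line-bundle $\mathcal{L}\subset E$, namely the first step of its Harder--Narasimhan filtration. The extension class lies in $\mathrm{Ext}^1(\mathcal{L}^{-1},\mathcal{L})\cong H^1(C,\mathcal{L}^{\otimes 2})$, which by Serre duality vanishes as soon as $\deg\mathcal{L}>g-1$; in that range, $E\cong \mathcal{L}\oplus \mathcal{L}^{-1}$ splits canonically. Moving one step further out along a cusp ray corresponds, via elementary modification at $P$, to replacing $\mathcal{L}$ by $\mathcal{L}(P)$, so the whole ray is labelled by a single class in $\mathrm{Pic}(C)/\langle [P]\rangle = \mathrm{Pic}(A)$. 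Conversely, every such class is realized: a sufficiently positive representative gives a split bundle that sits far out along exactly one cusp ray.

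Finally, I show that the body $Y$ is finite. Vertices outside the cusps correspond to bundles $E$ whose maximal sub-line-bundle has degree roughly $\le g-1$; the stack of rank-$2$ bundles with trivial determinant and bounded HN-slopes is of finite type, and passing to isomorphism classes leaves only finitely many vertices, whence $Y$ is finite. The main technical obstacle is to set the boundary between ``body'' and ``ray'' cleanly enough that three things happen simultaneously beyond the threshold: the ray is really a half-line (no branching in the tree), the parametrization by $\mathrm{Pic}(A)$ is bijective with no collisions or missed classes, and the cusps and the body intersect exactly in a single vertex apiece. This pinpoints controlling the local structure of $X$ at split-bundle vertices and requires a careful case analysis.
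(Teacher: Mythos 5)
This theorem is quoted from Serre \cite[Ch.~II, Th.~9]{S}; the paper gives no proof of it, and your proposal in fact reconstructs precisely the argument that the authors set out to avoid. Your route is Serre's original one: identify vertices of $X$ with lattice classes, use strong approximation/Weil uniformization to give the quotient a meaning in terms of rank-$2$ bundles of trivial determinant, split the unstable bundles via Serre duality ($H^1(C,\mathcal{L}^{\otimes 2})=0$ for $\deg\mathcal{L}>g-1$), read the cusp rays as elementary modifications at $P$ of the destabilizing line subbundle, and bound the body by boundedness of the semistable locus. This is sound in outline. The paper's own method for the analogous statement (Theorem \ref{principal result}, for $\mathrm{SU}(3)$) is Mason's elementary one \cite{M}: explicit matrix computation of the stabilizers $\stab(u,v,n)$ in terms of the fractional ideals $\q_{u,v}$, Riemann--Roch on the affine curve to compare stabilizers along a ray (Lemmas \ref{lemma ideals} and \ref{lem c(u,v) is a ray}), and an elementary double-coset/ideal-class computation for the cusps (Proposition \ref{prop cusps}) in place of a moduli interpretation. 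The bundle approach buys a conceptual description of the body $Y$ (essentially the semistable locus); Mason's approach buys applicability to groups, such as $\mathrm{SU}(3)$, for which no usable theory of rank-$2$ vector bundles is available --- which is the whole point of the paper.

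A few points in your sketch need tightening. First, $\SL_2(K_P)$ preserves the two types of vertices of $X$, so the double coset $\SL_2(A)\backslash\SL_2(K_P)/\SL_2(\mathcal{O}_P)$ only accounts for half of $v(\overline X)$; you must either run the argument for both types or replace $\SL_2(K_P)$ by $\{g\in\GL_2(K_P):\det g\in K_P^{*2}\mathcal{O}_P^*\}$. Second, the finiteness of $Y$ and of the number of cusps both require $\bb F$ to be finite (so that $\pic(A)$ is finite and a finite-type stack has finitely many isomorphism classes of points); this hypothesis is implicit in Serre's statement and should be made explicit. Third, the injectivity of your labelling (distinct classes of $\pic(A)$ give distinct cusps) needs the observation that far out on a ray the destabilizing subbundle $\mathcal{L}$ is the \emph{unique} sub-line-bundle of positive degree, so that $\mathcal{L}\oplus\mathcal{L}^{-1}\cong\mathcal{M}\oplus\mathcal{M}^{-1}$ with $\deg\mathcal{L},\deg\mathcal{M}\gg 0$ forces $\mathcal{L}\cong\mathcal{M}$ and not merely $\mathcal{L}\cong\mathcal{M}^{\pm1}$. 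The remaining difficulty you flag yourself --- choosing the threshold so that the rays have valency two in the quotient, meet $Y$ in a single vertex and are pairwise disjoint --- is real, and is exactly where the bulk of the work lies in either approach (compare Lemma \ref{lemma valency cusps} and Proposition \ref{prop link}).
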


Then, using Bass-Serre Theory (c.f. \cite[Ch. I, \S 5]{S}), Serre introduces a family of triples $ \lbrace (I_{\sigma}, \mathcal{P}_{\sigma}, \mathcal{B}_{\sigma}) \rbrace_{\sigma \in \mathrm{Pic}(A)}$ and a group $\cal H$ naturally associated to the action of $\SL_2(A)$ on the tree. These groups satisfy the following properties: $I_\sigma$ is an $A$-fractional ideal, $\mathcal{P}_\sigma= (\mathbb{F}^{*}\times \mathbb{F}^{*})\ltimes I_\sigma$, $\mathcal{B}_{\sigma}$ has canonical injections $\mathcal{B}_{\sigma} \rightarrow \mathcal{H}$ and $\mathcal{B}_{\sigma} \rightarrow \mathcal{P}_{\sigma}$ and $\cal H$ is finitely generated if the base field $\F$ is finite. He gets then the following result.

\begin{Thm}\cite[Ch. II, Th. 10]{S}\label{teo serre grup}
The group $\SL_2(A)$ is isomorphic to the sum of $\mathcal{P}_\sigma$, for $\sigma \in \mathrm{Pic}(A)$, and $\mathcal{H}$, amalgamated along their common subgroups $\mathcal{B}_\sigma$ according to the above injections.
\end{Thm}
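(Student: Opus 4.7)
The plan is to apply Bass-Serre theory (\cite[Ch.~I, \S 5]{S}) to the action of $\SL_2(A)$ on the tree $X$ and then to collapse the cusp-ray part of the quotient graph $\overline{X}$ provided by Theorem \ref{teo serre quot}, so that the resulting graph of groups reduces to an amalgam at the level of the body $Y$.

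First, after a possible barycentric subdivision so that the action is without inversions, Bass-Serre theory yields a canonical isomorphism between $\SL_2(A)$ and the fundamental group $\pi_1(\overline{X},\mathcal{G})$ of a graph of groups whose underlying graph is $\overline{X} = \SL_2(A) \backslash X$ and whose vertex and edge groups are the stabilizers in $\SL_2(A)$ of chosen lifts in $X$. By Theorem \ref{teo serre quot}, $\overline{X}$ is the union of a finite graph $Y$ and cusp rays $R_\sigma = (v_0^\sigma, v_1^\sigma, \dots)$, one for each $\sigma \in \pic(A)$, each attached to $Y$ at its base vertex $v_0^\sigma$.

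Second, I would analyze the stabilizers along each cusp ray. Using the lattice description of vertices of $X$, one identifies cuspidal vertices with homothety classes of reducible lattices, so that $\stab(v_n^\sigma)$ is contained in the upper-triangular Borel of $\SL_2$. An explicit computation then yields $\stab(v_n^\sigma) \cong (\F^* \times \F^*) \ltimes J_n^{(\sigma)}$, where the $J_n^{(\sigma)}$ form an ascending chain of $A$-fractional ideals whose union is an ideal $I_\sigma$ of class $\sigma$. The edge stabilizer between consecutive vertices of the ray coincides with the smaller vertex stabilizer, and the base stabilizer $\mathcal{B}_\sigma := \stab(v_0^\sigma)$ injects both into the direct limit $\bigcup_n \stab(v_n^\sigma) = (\F^* \times \F^*) \ltimes I_\sigma =: \mathcal{P}_\sigma$ and into the group of the body via the edge joining $v_0^\sigma$ to $Y$.

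Finally, setting $\mathcal{H} := \pi_1(Y, \mathcal{G}|_Y)$ based at a fixed vertex of $Y$, I can use the standard behaviour of fundamental groups of graphs of groups under edge collapse to contract each cusp ray to a single vertex with group $\mathcal{P}_\sigma$ attached to $\mathcal{H}$ along $\mathcal{B}_\sigma$, giving the amalgamated product of the statement. The main technical obstacle is the second step: identifying the stabilizers along each ray with the explicit semidirect product $(\F^* \times \F^*) \ltimes I_\sigma$ and checking that the class $[I_\sigma] \in \pic(A)$ matches the indexing parameter $\sigma$ of Theorem \ref{teo serre quot}. This is handled through the dictionary between vertices of $X$ and isomorphism classes of rank-two vector bundles on $\mathcal{C}$, via which cuspidal vertices correspond to decomposable bundles and their automorphism groups naturally produce the expected Borel-type stabilizers.
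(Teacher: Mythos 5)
Your proposal is correct and follows essentially the same route as the source: this statement is quoted from Serre \cite[Ch.~II, Th.~10]{S}, and both Serre's argument and this paper's proof of its own analogue (Theorem \ref{thm amalgam}) proceed exactly as you describe --- Bass--Serre theory applied to $\overline{X}$, nested vertex stabilizers along each cusp ray whose edge groups equal the smaller vertex group so that the ray's fundamental group is the direct limit $\mathcal{P}_\sigma$, and the gluing theorem for graphs of groups joined along a tree to obtain the amalgam with $\mathcal{H}=\pi_1(Y,\mathcal{G}|_Y)$. Your identification of the cuspidal stabilizers via decomposable rank-two bundles is precisely Serre's original mechanism (the step this paper replaces by Mason-style explicit matrix computations in its $\mathrm{SU}(3)$ setting), so no gap remains.
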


In the literature, there are many particular cases of groups of the form $\SL_2(A)$ whose explicit structures has been described as amalgamated sums by describing the corresponding quotient graphs.
The most classical example was introduced by Nagao in \cite{N}. This corresponds to the case where $C=\mathbb{P}^1_{\mathbb{F}}$ and $\deg(P)=1$. In this context, up to automorphism, we can assume that $A=\mathbb{F}[t]$. By using this convention, it is not hard to see that the quotient graph is isomorphic to a ray. Consequently, it is possible to establish an isomorphism between $\SL_2(\mathbb{F}[t])$ and the amalgamated sum of the group of upper triangular matrices with coefficients in $\mathbb{F}[t]$ and $\SL_2(\mathbb{F})$ amalgamated along their intersection (see \cite{N} or \cite[Ch. II, \S 1.6]{S}).

Further developments are introduced by Serre in \cite[Ch. II, \S 2.4]{S} where he covers the cases where $C=\mathbb{P}^1_{\mathbb{F}}$ and $\text{deg}(P) \in \lbrace 2,3,4 \rbrace$, or when $C$ is a curve of genus $0$ without rational points and $\text{deg}(P)=2$. The first author of this article extends the study of Serre's and Nagao's explicit examples in \cite{Arenas}, by determining the quotient graphs when the closed point $P$ has degree $5$ or $6$, and giving a method for further computations.

The preceding results are interesting since they describe the combinatorics of $\SL_2(A)$ by characterizing the combinatorics of its corresponding quotient graph. But they can also be applied to the study its homology and cohomology. See for instance the following result by Serre.

\begin{Thm}\label{Th. Serre homology}\cite[Ch. II, \S 2.8]{S} Assume that $\mathbb{F}$ is a finite field of characteristic $p$. Let $M$ be an $\mathrm{SL}_2(A)$-module that is finitely generated as a group. Then, for each $i \geq 2$, $H_i(\mathrm{SL}_2(A), M)$ is the sum of a finite abelian group and a countable $p$-primary torsion group. Moreover, when $M$ has finite order prime to $p$, then  $H_i(\mathrm{SL}_2(A),M)$ is finite for $i\geq 0$. Also, when $M=\mathbb{Q}$, we have $H_i(\mathrm{SL}_2(A), M)=0$ for all $i \geq 2$ and $H_1(\mathrm{SL}_2(A), M)$ is a $\mathbb{Q}$-vector space of finite dimension.
\end{Thm}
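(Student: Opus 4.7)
The plan is to derive the homology of $\SL_2(A)$ from the amalgamated product decomposition supplied by Theorem~B, combined with Bass--Serre theory. The action of $\SL_2(A)$ on the tree $X$ yields a Mayer--Vietoris long exact sequence
\[
\cdots \to \bigoplus_{\sigma \in \pic(A)} H_i(\mathcal{B}_\sigma, M) \to H_i(\mathcal{H}, M) \oplus \bigoplus_\sigma H_i(\mathcal{P}_\sigma, M) \to H_i(\SL_2(A), M) \to H_{i-1}(\dots) \to \cdots
\]
(equivalently, one uses the equivariant spectral sequence $E^1_{pq} = \bigoplus H_q(G_\tau, M) \Rightarrow H_{p+q}(\SL_2(A), M)$ coming from the action on $X$, which degenerates since $X$ is $1$-dimensional). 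All three parts of the statement will then be read off from asymptotic properties of the terms on the left.

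The heart of the argument is a careful analysis of $H_i(\mathcal{P}_\sigma, M)$ for the cusp groups. Since $I_\sigma$ is a fractional $A$-ideal and $A$ is an $\F$-algebra with $\car(\F)=p$, the abelian group underlying $I_\sigma$ is an elementary abelian $p$-group of countably infinite rank (for $\F$ finite). Writing $I_\sigma$ as a filtered colimit of finite-dimensional $\F_p$-subspaces $U_\alpha$, the homology $H_q(U_\alpha,\Z)$ is a finite $p$-group for $q \geq 1$, so $H_q(I_\sigma, \Z) = \varinjlim H_q(U_\alpha, \Z)$ is a countable $p$-primary torsion group. Universal coefficients and the same colimit argument show that $H_q(I_\sigma, M)$ is countable $p$-primary torsion for every finitely generated $M$ and every $q \geq 1$. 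The Lyndon--Hochschild--Serre spectral sequence for $1 \to I_\sigma \to \mathcal{P}_\sigma \to \F^*\times\F^* \to 1$ then transfers this property to $H_q(\mathcal{P}_\sigma, M)$, because the finite quotient $\F^*\times\F^*$ has order coprime to $p$ and hence the spectral sequence collapses onto its bottom row after inverting invertible orders. An identical analysis applies to $\mathcal{B}_\sigma$.

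Next I would control the body contribution. Because $\F$ is finite, $\mathcal{H}$ is finitely generated (as the fundamental group of a finite graph of groups with finitely generated vertex groups, by hypothesis recalled in the introduction), so $H_i(\mathcal{H}, M)$ is a finitely generated abelian group for any finitely generated $M$ and each~$i$; in particular it decomposes as a finite group plus a free abelian group. Feeding these two kinds of input into the Mayer--Vietoris sequence gives statement~(1): for $i\geq 2$, the finite direct sum over body terms contributes a finitely generated abelian piece, while the $\bigoplus_\sigma$ terms contribute countable $p$-primary torsion, and the full cokernel/kernel inherits the same shape. For statement~(2), when $|M|$ is prime to~$p$, the colimit argument above gives $H_q(I_\sigma, M) = 0$ for $q \geq 1$ (each $H_q(U_\alpha, M)$ is killed both by $|M|$ and by $p^*$), which forces $H_q(\mathcal{P}_\sigma, M)$ and $H_q(\mathcal{B}_\sigma, M)$ to vanish for $q\geq 1$; the sequence then reduces to a computation involving only finitely many finite groups in each degree, giving finiteness of $H_i(\SL_2(A), M)$. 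Statement~(3) is the rational version of the same phenomenon: $H_q(I_\sigma, \mathbb{Q})$ vanishes for $q \geq 1$ (a torsion abelian group has no rational higher homology), and $H_q(\F^*\times\F^*, \mathbb{Q}) = 0$ for $q\geq 1$, so all cusp and body stabilizers have trivial rational homology in positive degrees; the spectral sequence then collapses to $H_*(\bar X, \mathbb{Q})$, which vanishes in degree $\geq 2$ because $\bar X$ is homotopy equivalent to its finite body (the cuspidal rays being contractible), and yields a finite-dimensional $\mathbb{Q}$-space in degree $1$.

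The main obstacle I expect is the bookkeeping with the infinite direct sum over $\pic(A)$: one must verify that a countable direct sum of countable $p$-primary torsion groups remains countable (true as soon as $\pic(A)$ is countable, which follows from $\F$ being finite), and that no uncontrolled terms escape from the Mayer--Vietoris when the index set is infinite. A secondary technical point is verifying that the vertex stabilizers appearing in the body---and not merely the amalgamation factor $\mathcal{H}$---are themselves finitely generated with well-behaved homology when $\F$ is finite; this relies on the finiteness of $Y$ guaranteed by the hypothesis that $\F$ is finite, together with the fact that each vertex stabilizer is commensurable with an arithmetic subgroup of a compact-mod-center reductive group, for which finiteness properties are classical.
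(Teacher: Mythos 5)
Your overall architecture is the right one, and it is essentially the one Serre uses in \cite[Ch.~II, \S 2.8]{S} (and the one this paper reproduces for $\mathrm{SU}(3)$ in Propositions \ref{prop exact sequence} and \ref{prop homo}): a Mayer--Vietoris sequence attached to the amalgam of Theorem \ref{teo serre grup}, combined with a Lyndon--Hochschild--Serre analysis of the cusp groups $\cal P_\sigma=(\F^*\times\F^*)\ltimes I_\sigma$ using that $I_\sigma$ is a filtered union of finite $p$-groups and that the quotient has order prime to $p$. That part of your argument is sound, and your direct treatment of the case $M=\bb Q$ via the degeneration of the equivariant spectral sequence onto $H_*(\overline X,\bb Q)$ is a legitimate (slightly different) route to part (3); the paper instead deduces it from the integral statement via $H_i(G,\bb Q)=H_i(G,\Z)\otimes\bb Q$.

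There is, however, a genuine gap in your treatment of the body term $H_i(\cal H,M)$. You argue that $\cal H$ is finitely generated, ``so'' $H_i(\cal H,M)$ is a finitely generated abelian group for every $i$. This implication is false: finite generation of a group gives no control on its homology in degrees $\geq 2$ (already $H_2$ of a finitely generated group can fail to be finitely generated). What is actually needed --- and what Serre proves, mirrored here as Proposition \ref{prop homo}(b) --- is that $\cal H$ is the fundamental group of a \emph{finite} graph of \emph{finite} groups (the vertex stabilizers are finite because each is the intersection of the discrete group $\SL_2(A)$ with a compact open subgroup of $\SL_2(K_P)$, $\F$ being finite), hence is virtually free of finite rank; this is what yields $H_i(\cal H,M)$ finite for $i\geq 2$ and finitely generated for $i\leq 1$. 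Your closing appeal to vertex stabilizers being ``commensurable with arithmetic subgroups of compact-mod-center reductive groups'' points in a far more complicated direction than necessary and does not by itself deliver the virtual freeness you need. A second, smaller point: for the ``finite abelian group'' summand in part (1) you must use that $\pic(A)$ is \emph{finite} (which it is when $\F$ is finite), not merely countable; each $H_i(\cal P_\sigma,M)$ contributes a nonzero finite prime-to-$p$ piece in general, and a countably infinite direct sum of such pieces would not be finite, so your hedge ``true as soon as $\pic(A)$ is countable'' does not suffice for the statement as written.
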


One of the most significant difficulties at the moment of extending Serre's results to other reductive groups is that he extensively uses the theory of vector bundles of rank $2$ in his work, which is a particular interpretation of the Bruhat-Tits tree of $\SL_2$. Fortunately, there is a more elementary method in the literature that is easier to generalize. In \cite{M}, Mason studies the combinatorics of quotient graphs with a point of view that only requires the Riemann-Roch Theorem and some basic notions about Dedekind rings.\\

In order to present more current results, we need to introduce some technical definitions. Recall that, to every discretely valued field $K_P$ and every split reductive $K_P$-group $G$, we associate as in \cite{BruhatTits1} a polysimplicial complex $\mathcal{X}=\mathcal{X}(G,K_P)$. This complex is called the Bruhat-Tits building of $(G,K_P)$. When $G$ has rank one, for instance when $G=\SL_2$, the associated building is actually a tree. The analogs of rays in trees in higher dimensional buildings are called sectors. When $K=\bb F(t)$ and $G$ is a semisimple, simply connected, split $K$-group, Soul\'e describes in \cite{So} the topology and combinatorics of the quotient space $\overline{\mathcal{X}}:= G(\mathbb{F}[t]) \backslash \mathcal{X}$. More precisely, he shows that $\overline{\mathcal{X}}$ is isomorphic to a sector in $\mathcal{X}$, extending Nagao's results. Soul\'e consequently describes the group $G(\mathbb{F}[t])$ as an amalgamated sum of some of its subgroups. Finally, by analyzing the preceding action on $\mathcal{X}$, he obtains some results on the homology groups $H_{*}(G(\mathbb{F}[t]), F)$, for some fields $F$. His results were extended by Margaux in \cite{Margaux} to the case where $G$ is a semisimple, simply connected, isotrivial $K$-group, i.e., when $G$ splits over an extension of the form $L=\ell K$, where $\ell/\mathbb{F}$ is a finite extension.

Going to the particular case of a \emph{finite} base field $\bb F$, one of the strongest current results on the structure of quotient buildings is due to Bux, K\"ohl and Witzel (see \cite{B}). This is written in terms of a certain thin subspace of $\mathcal{X}$ that covers the quotient of the building. We state this result for further use below.

\begin{Thm}\cite[Prop 13.6]{B}\label{thm witzel}
Assume that $\mathbb{F}$ is finite. Let $G$ be an isotropic, non-commutative algebraic $K$-group and let $\mathcal{X}=\mathcal{X}(G,K)$ be the building associated to $G$ and $K$. Let $S$ be a finite set of places of $K$ and denote by $\mathcal{O}_{S}$ the ring of $S$-integers of $K$. Choose a particular realization $G\subset\mathrm{GL}_{n}$, and subsequently define $G$ as $G(K)\cap\mathrm{GL}_{n}(\mathcal{O}_{S})$. Then, there exists a constant $L$ and finitely many sectors $Q_1, \cdots, Q_s$ such that
\begin{itemize}
\item[(1)] The $G$-translates of the $L$-neighborhood of $\bigcup_{i=1}^{s} Q_i$ cover $\mathcal{X}$.
\item[(2)] For $i\neq j$, the $G$-orbits of $Q_i$ and $Q_j$ are disjoint.
\end{itemize} 
\end{Thm}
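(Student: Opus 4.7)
The plan is to apply the reduction theory of $S$-arithmetic groups over the global function field $K$, in the spirit of Harder and Behr, combined with the geometric dictionary between sectors of the Bruhat-Tits building $\mathcal{X}$ and proper parabolic $K$-subgroups of $G$. Since $G$ is $K$-isotropic, such parabolics exist; each proper parabolic $K$-subgroup $P$ stabilizes a chamber at infinity of $\mathcal{X}$, and conversely every sector of $\mathcal{X}$ arises from (a conjugate of) such a $P$. Consequently, $G(\mathcal{O}_S)$-orbits of sectors are in bijection with a disjoint union, over $G(K)$-conjugacy classes of proper parabolic $K$-subgroups, of double coset spaces $G(\mathcal{O}_S)\backslash G(K)/P(K)$.

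First I would prove finiteness. There are finitely many $G(K)$-conjugacy classes of minimal parabolic $K$-subgroups (from the theory of relative root systems), and for each such class the associated double coset space $G(\mathcal{O}_S)\backslash G(K)/P(K)$ is finite; this is the classical statement that $S$-arithmetic subgroups of reductive groups over global function fields with finite constant field have finite class number, and ultimately rests on the compactness of the norm-one adelic quotient of $G$. Choose one sector $Q_i$ per double coset; condition (2) is then immediate, since any $\gamma \in G(\mathcal{O}_S)$ with $\gamma Q_i = Q_j$ forces $Q_i$ and $Q_j$ to represent the same double coset.

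For the covering condition (1), I would attach to each $Q_i$ a Siegel set $\Sigma_i\subset\mathcal{X}$, designed so that $\Sigma_i$ is contained in the $L_i$-neighborhood of $Q_i$ for an explicit constant $L_i$ controlled by a chosen fundamental domain for the unipotent radical of the defining parabolic. The main clause of reduction theory then yields $G(\mathcal{O}_S)\cdot(\Sigma_1\cup\cdots\cup\Sigma_s)=\mathcal{X}$, from which (1) follows by taking $L:=\max_i L_i$.

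The main obstacle lies in translating Siegel sets into the Bruhat-Tits building and establishing that a Siegel set is contained in a bounded neighborhood of its defining sector. In the polysimplicial setting this is not automatic; it requires a careful analysis of horospherical coordinates near the chamber at infinity stabilized by $P$, together with a boundedness estimate for the action of the unipotent radical of $P$ on $\mathcal{X}$. This is where the hypothesis that $\mathbb{F}$ be finite enters essentially, via the discreteness and cocompactness of the unipotent part of the arithmetic group, which in turn ensures that the ``width'' of each $\Sigma_i$ transverse to $Q_i$ is bounded by a uniform constant.
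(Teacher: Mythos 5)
First, a point of order: the paper does not prove this statement at all. Theorem \ref{thm witzel} is imported verbatim from Bux, K\"ohl and Witzel \cite[Prop.~13.6]{B} and used as a black box (its only role is to deduce, in \S\ref{sec proof main}, that the ``body'' $Y$ of the quotient graph is finite when $\bb F$ is finite). So there is no internal proof to compare yours against; what follows assesses your sketch against the actual argument in \cite{B}.

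Your plan is in the right spirit --- the proof in \cite{B} is indeed an elaboration of Harder--Behr reduction theory over global function fields --- but as written it has three genuine gaps. (a) The asserted dictionary ``every sector of $\mathcal{X}$ arises from a conjugate of a proper parabolic $K$-subgroup'' is false: $G(K)/P(K)$ parametrizes only the \emph{rational} points of the spherical building at infinity, whereas the generic chamber at infinity of the relevant (local or $S$-local) Bruhat--Tits building is not rational. This does not sink the strategy, since for condition (1) one only needs \emph{some} finite family of sectors whose orbit-neighborhoods cover $\mathcal{X}$, but the clean bijection you use to organize the argument is not available, and condition (2) must be checked for the sectors actually chosen rather than read off from a global parametrization. (b) You justify the finiteness of $G(\mathcal{O}_S)\backslash G(K)/P(K)$ by ``compactness of the norm-one adelic quotient'', but that quotient is compact only when $G$ is $K$-anisotropic, which is excluded here by the isotropy hypothesis. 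The correct input is Harder's finiteness of the number of cusps (finiteness of $\Gamma\backslash G(K)/P(K)$ for $S$-arithmetic $\Gamma$), which is itself a nontrivial theorem of reduction theory rather than a consequence of compactness; as stated, your argument for finiteness is both circular and based on a false premise. (c) Most importantly, the covering statement (1) --- that Siegel sets sit inside metric neighborhoods of sectors of uniformly bounded width and that finitely many of them cover $\mathcal{X}$ modulo $\Gamma$ --- is precisely the substantive content of \cite[\S 13]{B} (their reduction datum and ``uniform depletion'' machinery). You correctly flag this as the main obstacle but then leave it entirely open, so the proposal is a plausible roadmap rather than a proof.
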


When $G$ has dimension one, then Theorem \ref{thm witzel} reduces to a special case of a theorem due to Lubotzky (\cite[Th. 6.1]{L}), who proves that the corresponding quotient graph is combinatorially finite. The proof of this result strongly involves the use of some strong theorems due to Raghunathan. Using this, Lubotzky gives a general structure theorem for lattices in $G$ and consequently confirms Serre's conjecture that such arithmetic lattices do not satisfy the congruence subgroup property.

Still over finite fields, the homological and cohomological applications of the preceding results go further than the previously described results by Serre and Soul\'e. For instance, we can cite some results by Stuhler in \cite{Stuhler}, which strongly use the combinatorial structure of the quotient of $X=\mathcal{X}(\SL_2,K_P)$ by $\Gamma= \mathrm{PGL}_2(\mathcal{O}_{S})$, where $S$ is a finite set of closed points on $C$ not including $P$. Indeed, by analyzing this quotient graph, Stuhler proves that the cohomology groups $H^{t}(\Gamma, \mathbb{F}_p)$, for $t=\mathrm{Card}(S)$, are infinite dimensional $\mathbb{F}_p$-vector spaces. This has implications on the number of generators, relations, relations among relations, and so on, for the group $\Gamma$. Finally, one of the strongest existing results on rational cohomology of split semisimple simply connected groups was proved by Harder in \cite{H2} by analyzing the actions of the latter on buildings. It essentially states that $H^v(G(\mathcal{O}_{S}), \mathbb{Q})=0$ for $v \not\in\{0, rt\}$, where $r$ is the rank of $G$ and $t=\mathrm{Card}(S)$. It also describes the dimension of the non trivial cohomology groups in terms of representations of the group $\prod_{Q \in S} G(K_Q)$. This analysis is what is currently known as reduction theory.\\

Note that almost all the previous combinatorial and homological results are specific to split groups. Thus, it is natural to seek for a generalization to quasi-split (non-isotrivial) groups. The main goal of this article is to deal with the case where $G$ is the special unitary group $\mathrm{SU}(3,h)$ associated to a three-dimensional hermitian form $h$ defined over $K$. Note that this the natural first step in this direction since $\mathrm{SU}(3,h)$ is the only quasi-split non-split simply connected semisimple group of split rank $1$ (cf. \cite[4.1.4]{BruhatTits2}). Since $\SL_2$ and $\mathrm{SU}(3,h)$ encode the behaviour of all the possible Levi subgroups of rank $1$ of quasi-split reductive groups, it is expectable that one can obtain results in the general case from results on these two cases.

Specifically, in this article we follow Mason's approach in \cite{M} in order to obtain analog results to Theorems \ref{teo serre quot}, \ref{teo serre grup} and \ref{Th. Serre homology} for $\mathrm{SU}(3,h)$. The topological implications of our results are stronger than what one can obtain from Bux, K\"ohl and Witzel's result or from Lubotzky's result, since here the ground field $\mathbb{F}$ is not assumed to be finite, but also because we get a precise control on a particular set of cusp rays in the quotient, which we call the rational cusps, and which constitute the totality of cusps in the finite field case. Moreover, by using Bass-Serre theory, we consequently describe the structure of the considered arithmetic groups as amalgamated sums. We also obtain results on the homology of these groups, for rational and finite modules. Finally, we present some interesting examples with explicit computations, getting in particular what could be considered as a ``quasi-split analog'' of Nagao's Theorem.

\section{Context and main results}\label{sec context}

Let $\F$ be a field of characteristic $\neq 2$, $C$ a smooth, projective, geometrically integral curve over $\F$ and $K$ the function field of $C$. Let $L/K$ be a quadratic extension with $\F$ algebraically closed in $L$ and denote by $\tau$ the generator of $\gal(L/K)$. This corresponds to a $2:1$ morphism of geometrically integral $\bb F$-curves $\psi:D\to C$ with $L$ the function field of $D$.

We consider the $C$-group scheme $G$ defined as follows. Consider an affine subset of $C$ given by $\spec(R)$ for some ring $R\subset K$ such that $K=\mathrm{Quot}(R)$. Let $S\subset L$ be the integral closure of $R$ in $L$. We have that $\spec(S)$ is the fiber of $\psi$ over $\spec(R)$. Let $h_R:S^3\to R$ be the $R$-hermitian form defined by
\[h_R(x_{-1},x_0,x_1):=x_{-1}\bar x_{1}+x_0\bar x_0+x_{1}\bar x_{-1},\]
where $\bar{\phantom a}$ denotes the action of $\tau\in\gal(L/K)$.
Then we define $G_R$ as the group scheme $\su(h_R)$, which is a semisimple group scheme away from ramification points (in the sense of \cite[Exp.~XIX, Def.~2.7]{SGA3III}). Since we can cover $C$ with affine subsets $\spec(R_i)$ with affine intersection, the groups $G_{R_i}$ clearly glue in order to define $G$, which we denote alternatively as $\su(h)$ (see more details on the $C$-group scheme $\su(h)$ in section~\ref{sec param}).

Let $P$ be a closed point in $C$ of degree $d$. The curve $C\smallsetminus\{P\}$ is known to be affine and thus it corresponds to $\spec(A)$ for some subring $A\subset K$ such that $K=\mathrm{Quot}(A)$. As a subset of $K$, it contains precisely the global functions that only have poles in $P$. We also consider the completion $K_P$ of $K$ with respect to the discrete valuation $\omega:=\omega_P$ induced by the point $P$. We write $\cal O_P$ for its ring of integers and $\kappa_P$ for its residue field. It is a degree $d$ extension of $\bb F$.

\emph{We assume that $L/K$ is non-split at $P$} (but it could be either ramified or unramified) and we denote by $Q$ the point in $D$ above $P$. Let $B\subset L$ be the integral closure of $A$ in $L$. Then $\spec(B)=D\smallsetminus\{Q\}$. Denote by $h_A$, $h_K$ and $h_{K_P}$ the respective induced hermitian forms on $B$, $L$ and $L_Q=L\otimes_K K_P$.

We will consider as well the reductive $K$-group $G_K=\su(h_K)$ and the $K_P$-group $G_{K_P}=\su(h_{K_P})$ obtained by base change. By our assumption on $L$ and $P$, $G_{K_P}$ and $G_K$ have semisimple rank $1$. Let $X = X(G,K_P)$ be the Bruhat-Tits tree of $G_{K_P}$ over $K_P$. The group $G_{K_P}(K_P)=G(K_P)$ acts naturally on this tree, and so do $G(K)$ and $G(A)$ as subgroups of $G(K_P)$. We are interested in the quotient graph $\overline X=X/G(A)$.\\

For every graph $\mathfrak{g}$, denote by $v(\mathfrak{g})$ and $e(\mathfrak{g})$ the corresponding sets of vertices and edges, respectively. We have then the following result on the graph $\overline X=X/G(A)$:

\begin{Thm}\label{principal result}
There exist a connected subgraph $Y$ of $\overline X$ and geodesic rays $\cf(\sigma)\subseteq \overline X$ with initial vertex $v_\sigma\in \overline X$ for each $\sigma\in\pic(B)$ such that
\begin{itemize}
\item $\displaystyle \overline X=Y\cup \bigsqcup_{\sigma\in \pic(B)} \cf(\sigma)$;
\item $v(Y)\cap v(\cf(\sigma))=\{v_\sigma\}$;
\item $e(Y)\cap e(\cf(\sigma))=\emptyset$.
\end{itemize}
Moreover, if $\bb F$ is finite, then the subgraph $Y$ is finite.
\end{Thm}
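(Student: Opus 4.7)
The strategy I would follow is Mason's approach in \cite{M}, adapted from rank-two $A$-bundles to three-dimensional hermitian $B$-modules: parametrize vertices of $X$ by global data on the curve $D$, define a numerical invariant that separates the cusps from the body, and use the classification of rank one $B$-modules to index the rays.

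First, I identify the vertices of $X$ with homothety classes of $\cal O_Q$-hermitian lattices in $L_Q^3$, adapted to both the ramified and unramified local models for $\su(3,h_{K_P})$. By intersecting such a lattice with the standard global hermitian module $B\cdot e_{-1}\oplus B\cdot e_{0}\oplus B\cdot e_{1}\subset L^{3}$, I attach to each vertex $x\in v(X)$ a rank-three $B$-submodule $M_x\subset L^3$. Since $B$ is Dedekind, $M_x$ is controlled by its Steinitz class together with its local structure at $Q$, and its isomorphism type is a $G(A)$-invariant. I then define a \emph{height} $\mu:v(\overline X)\to\Z$ by setting $\mu(x)$ to be the maximal degree on $D$ of an isotropic rank one $B$-sub-bundle of $M_x$; the existence of this maximum is granted by Riemann--Roch on the geometrically integral curve $D$ (this is where the hypothesis that $\F$ is algebraically closed in $L$ is exploited).

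The key technical step is a rigidity lemma: there exists $N_0$ such that for every vertex $x$ with $\mu(x)>N_0$, the maximising isotropic sub-line-bundle $\mathcal L_x\subset M_x$ is unique, its class modulo $\langle Q\rangle$ defines a well-defined invariant $\sigma(x)\in\pic(B)$, and $x$ has exactly one neighbour $x^{+}$ with $\mu(x^{+})=\mu(x)+1$ and $\sigma(x^{+})=\sigma(x)$, all other neighbours satisfying $\mu<\mu(x)$. The vertices with $\mu>N_0$ and invariant $\sigma$ then organize themselves into a geodesic ray $\cf(\sigma)$ whose initial vertex $v_\sigma$ sits at height $N_0+1$. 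Disjointness of the cusps for different $\sigma$'s is immediate from the $G(A)$-invariance of $\sigma$, and surjectivity of $\sigma$ onto $\pic(B)$ follows by exhibiting for each fractional ideal class an explicit lattice of large height with that invariant.

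Finally, I define $Y$ as the subgraph of $\overline X$ spanned by all vertices of height $\leq N_0$, together with the $v_\sigma$'s; the two intersection conditions are then built in by construction. Connectedness of $Y$ follows by a descent argument showing that any vertex of height $\leq N_0$ can be joined inside $Y$ to some $v_\sigma$, since outside the cusp region $\mu$ can always be made non-increasing along a suitable edge. When $\bb F$ is finite, Theorem~\ref{thm witzel} applied with $S=\{P\}$ provides finitely many sectors --- which are rays, because $G$ has relative rank one --- whose $G(A)$-orbits cover $X$ up to a uniform $L$-neighbourhood. These rays must coincide modulo $G(A)$ with the $\cf(\sigma)$'s produced above, which forces the vertices of $Y$ to lie in a bounded $L$-neighbourhood of finitely many points, hence $Y$ is finite. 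The main obstacle I anticipate is precisely the rigidity lemma: treating uniformly the two types of vertices of $X$ (which behave differently when $L/K$ is ramified at $P$) and proving both uniqueness of the maximising sub-bundle and the adjacency statement at high height require a careful hermitian analog of Mason's key subbundle computation; a secondary delicate point is verifying that $v_\sigma$ is a single vertex, so that $\cf(\sigma)$ really is attached to $Y$ along one point as claimed.
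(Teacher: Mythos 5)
Your plan takes a genuinely different route from the paper: you propose the reduction-theoretic approach via a height function on hermitian $B$-modules (maximal degree of an isotropic sub-line-bundle), which is essentially Serre's vector-bundle method for $\SL_2$ transposed to rank-three hermitian bundles. The paper deliberately avoids bundles and follows Mason's elementary method instead: explicit matrix computation of the stabilizers $\stab(u,v,n)$ of the vertices $\lambda_n(u,v)$ on the explicitly parametrized rays $\rf(u,v)=g_{u,v}^{-1}\cdot\rf(\infty)$, combined with Riemann--Roch applied directly to the fractional ideals $\q_{u,v}$ and $I(u,v)$. Your route is viable in principle (and, if carried out, would even be stronger, since it could rule out the ``irrational cusps'' that the paper explicitly cannot exclude from the body $Y$), but the paper's stated reason for not taking it is precisely that the rank-two bundle theory does not transfer easily to this setting.

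The genuine gap is your rigidity lemma, which is not merely unproven (as you acknowledge) but insufficient as formulated. Knowing that each vertex $x$ with $\mu(x)>N_0$ has a unique neighbour $x^+$ with $\mu(x^+)=\mu(x)+1$ while all other neighbours have smaller $\mu$ does not force the high-height locus of $\overline X$ to be a disjoint union of rays: it still permits a quotient vertex with several descending neighbours, and it permits two distinct vertices of equal height and equal invariant $\sigma$ whose ascending chains merge higher up; either phenomenon destroys the ray structure and the claim $v(Y)\cap v(\cf(\sigma))=\{v_\sigma\}$. What must actually be supplied, and what constitutes the bulk of the paper's work, is: (i) transitivity of the stabilizer of a high vertex on its descending edges, so that the quotient valency is exactly two (Lemma \ref{lemma valency cusps}, proved by showing that the unipotent part $U(u,v,n)$ covers $U_{a,\lambda_n(\infty)}/U_{a,\lambda_{n-1}(\infty)}$ via a Riemann--Roch decomposition inside the ideal $I(u,v)$); (ii) a non-folding statement, obtained from the numerical invariant $f_Pd\lfloor n/2\rfloor-\deg I(u,v)+r(u,v)$ attached to orbits of high vertices (Lemma \ref{lem c(u,v) is a ray} and Proposition \ref{prop link}); and (iii) a genuine bijection, not just a map, between cusps and $\pic(B)$ (Proposition \ref{prop cusps}, via Witt's theorem, a Steinitz-class argument and strong approximation) --- your sketch only addresses surjectivity. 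Your finiteness argument for $Y$ matches the paper's use of Bux--K\"ohl--Witzel, modulo the point, which you assert but should justify, that the sectors their theorem produces have rational visual limits and are therefore equivalent to the $\cf(\sigma)$.
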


In other words, the quotient graph $\overline X$ looks like a ``spider'' with a ``body'' $Y$ and a given set of ``legs'' $\cf(\sigma)$ going to infinity (this set will be finite whenever $\pic(B)$ is finite, for instance if $\bb F$ is finite). In particular, we have that the inclusion $Y \to \bar{X}$ is a homotopy equivalence. Note however that for infinite $\bb F$ we may have an infinite body and, to our knowledge, we cannot even exclude the possibility of it having infinite diameter.\\

Recall (c.f. \cite[I.\S5.4]{S}) that to the action of $G(A)$ on $X$ one may associate a graph of groups $(G(A),\bar X)$, which consists in equipping every vertex $v$ and edge $e$ of $\bar X$ with a group (here, the stabilizer of a preimage of the corresponding edge or vertex), denoted $G_v$ and $G_e$ respectively, and homomorphisms $G_e\to G_v$ when $e$ is incident to $v$. Consider the restrictions $(G(A),Y)$ and $(G(A),\cf(\sigma))$ of $(G(A),\bar X)$ to the subgraphs $Y$ and $\cf(\sigma)$. Let $\cal H$ denote the fundamental group of $(G(A),Y)$ and $\mathcal{G}_{\sigma}$ the fundamental group of $(G(A),\cf(\sigma))$ (c.f. \cite[I.\S 5.1]{S}). Since $\cf(\sigma)$ is a ray, $\mathcal{G}_{\sigma}$ is simply the sum of the $G_v$'s for $v\in v(\cf(\sigma))$, amalgamated along the corresponding inclusions of the $G_e$'s for $e\in e(\cf(\sigma))$. Since $v_{\sigma}$ is a common vertex of $Y$ and $\cf(\sigma)$, we have injections $G_{v_{\sigma}}\hookrightarrow \cal H$ and $G_{v_{\sigma}} \hookrightarrow \mathcal{G}_{\sigma}$. So, by using Bass-Serre theory we can deduce the following result on the structure of $G(A)$ as an amalgam (see \S \ref{sec app}):

\begin{Thm}\label{main thm amalgam}
Assume that $\overline{X}$ is combinatorially finite (for instance, if $\bb F$ is finite). Then, $G(A)$ is isomorphic to the sum of $\mathcal{G}_\sigma$, for $\sigma \in \pic(B)$, and $\mathcal{H}$, amalgamated along the groups $G_{v_\sigma}$, according to the previously defined injections. Moreover, each $\mathcal{G}_{\sigma}$ is an extension of a subgroup of $\bb F^*$ by a group $H_\sigma$ which itself is an extension of two finitely generated $A$-modules.

In particular, when $\mathbb{F}$ is a finite field, we have that $\mathcal{H}$ is finitely generated and $G_{v_\sigma}$ is finite for every $\sigma\in\pic(B)$.
\end{Thm}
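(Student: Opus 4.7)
The plan is to apply Bass-Serre theory to the action of $G(A)$ on the tree $X$, using the decomposition of $\overline X$ given by Theorem \ref{principal result}. By the structure theorem for groups acting on trees (\cite[I.\S5.4]{S}), $G(A)$ is canonically isomorphic to the fundamental group $\pi_1(G(A),\overline X,T)$ for any spanning tree $T$ of $\overline X$. Since each cuspidal ray $\cf(\sigma)$ is itself a tree, one may choose a spanning tree $T_Y$ of $Y$ and extend it by every $\cf(\sigma)$ to obtain a spanning tree $T$ of $\overline X$; with this choice every edge of $\overline X$ not in $T$ lies in $Y$, so no extra Bass-Serre generators are contributed by the rays. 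A standard iterated amalgamation along the pasting vertices $v_\sigma$ then identifies $\pi_1(G(A),\overline X,T)$ with the amalgamated sum of $\mathcal{H}$ with the groups $\mathcal{G}_\sigma$ along their common subgroups $G_{v_\sigma}$, which gives the first assertion; combinatorial finiteness of $\overline X$ enters here to guarantee that this amalgamation is well-defined as an iterated direct limit.

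For the internal structure of each $\mathcal{G}_\sigma$, the plan is to exploit that the ray $\cf(\sigma)$ is, by the construction underlying Theorem \ref{principal result}, cofinal in a sector of $X$ fixed at infinity by a $K$-rational maximal parabolic subgroup $P_\sigma$ of $G_K$. Along the ray the stabilizers form an ascending chain $G_{v_{\sigma,0}}\subseteq G_{v_{\sigma,1}}\subseteq\cdots$ inside $P_\sigma(K)\cap G(A)$, while the edge stabilizer $G_{e_{\sigma,i}}$ coincides with the smaller adjacent vertex group $G_{v_{\sigma,i}}$. Under these conditions the fundamental group of the graph of groups on a ray collapses to the directed union, so that $\mathcal{G}_\sigma=\bigcup_i G_{v_{\sigma,i}}=P_\sigma(K)\cap G(A)$. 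Invoking the Levi decomposition $P_\sigma=M_\sigma\ltimes U_\sigma$, where $M_\sigma$ is a two-dimensional $K$-torus isomorphic to $R_{L/K}\mathbb{G}_m$ and $U_\sigma$ is the Heisenberg-type unipotent radical sitting in an extension $1\to\mathbb{G}_a\to U_\sigma\to R_{L/K}\mathbb{G}_a\to 1$, one obtains a short exact sequence
\[
1\longrightarrow H_\sigma\longrightarrow \mathcal{G}_\sigma\longrightarrow \mathcal{G}_\sigma/H_\sigma\longrightarrow 1,
\]
with $H_\sigma=U_\sigma(K)\cap G(A)$ itself an extension of two finitely generated $A$-modules (an $A$-fractional ideal by a $B$-fractional ideal, the latter viewed as an $A$-module of rank two). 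The quotient $\mathcal{G}_\sigma/H_\sigma$ injects into $M_\sigma(K)\cap G(A)\subseteq B^\ast=\mathbb{F}^\ast$, the last equality following since any invertible regular function on $D\smallsetminus\{Q\}$ must be a nonzero constant in $\mathbb{F}$ (because $\mathbb{F}$ is algebraically closed in $L$).

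In the finite field case, $Y$ is finite by Theorem \ref{principal result}, and every vertex stabilizer $G_v$ is the intersection of the discrete group $G(A)$ with the compact stabilizer of a preimage $\tilde v\in X$ in $G(K_P)$, hence finite. Consequently $\mathcal{H}$ is the fundamental group of a finite graph of finite groups, so it is finitely generated, and each $G_{v_\sigma}$ is finite. The main obstacle in this plan is the second step: rigorously establishing the ascending chain of stabilizers together with the edge-equals-smaller-vertex pattern along each cuspidal ray, and relating the resulting direct union to the integral points of a fixed $K$-rational maximal parabolic. This amounts to matching the $\pic(B)$-indexed cusps of $\overline X$ with the $G(A)$-orbits of isotropic $L$-lines, and is essentially the content of the detailed analysis performed to construct the rays $\cf(\sigma)$ in the proof of Theorem \ref{principal result}.
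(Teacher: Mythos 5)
Your proposal is correct and follows essentially the same route as the paper: Bass--Serre theory plus Serre's gluing theorem for the amalgam, identification of each $\mathcal{G}_\sigma$ with the directed union of the ascending vertex stabilizers along the ray (equivalently, the $G(A)$-stabilizer of the corresponding rational point at infinity, which is the paper's $\operatorname{Stab}(u,v)=g_{u,v}^{-1}\mathbb{B}g_{u,v}\cap G(A)$), the extension structure read off from the semidirect product $\mathcal{U}_a\rtimes\mathcal{T}$ with the torus part landing in $B^\times=\mathbb{F}^\times$, and the compact-meets-discrete argument for finiteness over finite $\mathbb{F}$. The ``main obstacle'' you flag at the end (the ascending chain and edge-equals-smaller-vertex pattern along each cusp ray) is exactly the content of the paper's Proposition \ref{prop stab} and Lemma \ref{Lemma M}, which are indeed proved as part of the construction underlying Theorem \ref{principal result}.
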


We can use the previous result, for example, to show that $G(A)$ is not finitely generated (cf.~Corollary \ref{cor G(A) not fg}). We also prove along the way the following result on the holomogy groups relative to $G(A)$.

\begin{Thm}\label{main thm homology}
Assume that $\mathbb{F}$ is a finite field, and denote by $p$ its characteristic. Let $M$ be a $G(A)$-module that is a finitely-generated abelian group. Then, for each $i \geq 2$, the homology group $H_i(G(A), M)$ is the sum of a finite abelian group and a countable $p$-primary torsion group. Moreover, when $M$ has finite order prime to $p$, then all groups $H_i(G(A), M)$ are finite. Also, when $M=\mathbb{Q}$, we have $H_i(G(A), M)=0$, for all $i \geq 2$, and that $H_1(G(A), M)$ is a $\mathbb{Q}$-vector space of finite dimension.
\end{Thm}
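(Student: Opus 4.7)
The plan is to follow the strategy that Serre uses for Theorem~\ref{Th. Serre homology} in the $\SL_2$ case, adapted to $\su(3,h)$ by means of the quotient graph description of Theorem~\ref{principal result} and the amalgam decomposition of Theorem~\ref{main thm amalgam}. The key input is that $X$ is a contractible one-dimensional $G(A)$-CW-complex, so the equivariant spectral sequence for the Borel construction (equivalently, the Bass--Serre long exact sequence) yields
\[
\cdots \to \bigoplus_{e\in e(\overline X)} H_i(G_e, M) \to \bigoplus_{v\in v(\overline X)} H_i(G_v, M) \to H_i(G(A), M) \to \bigoplus_{e\in e(\overline X)} H_{i-1}(G_e, M) \to \cdots,
\]
where $G_v, G_e$ denote stabilizers of chosen lifts. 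I would then split the analysis according to the decomposition $\overline X = Y \sqcup \bigsqcup_{\sigma\in\pic(B)} \cf(\sigma)$ of Theorem~\ref{principal result}.

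For the body $Y$, which is finite under the running hypothesis, each stabilizer $G_v$ or $G_e$ is finite, since $G(A)$ is a discrete subgroup of $G(K_P)$ and the point stabilizers in the locally profinite group $G(K_P)$ are compact, so their intersection with $G(A)$ is compact-discrete. Therefore the contribution of $Y$ to each term of the sequence is finitely generated (in fact, finite when $M$ is finite, and zero in positive degrees when $M=\bb Q$). All the infinitary behaviour of the sequence lives in the cusp rays. By Theorem~\ref{main thm amalgam}, each vertex stabilizer along $\cf(\sigma)$ fits in a short exact sequence
\[
1\to N_v\to G_v\to F_v\to 1
\]
with $F_v$ a finite subgroup of $\bb F^{*}$ (hence of order prime to $p$) and $N_v$ an extension of two finitely generated $A$-modules, hence an abelian $p$-group since $A$ has characteristic $p$; the same structure applies to edge stabilizers.

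The computation of $H_{*}(G_v,M)$ is then carried out via the Lyndon--Hochschild--Serre spectral sequence for the above extension. For $M=\bb Q$, the $p$-torsion nature of $N_v$ forces $H_q(N_v,\bb Q)=0$ for $q>0$, and $F_v$ being of order prime to $p$ kills higher homology in the base; so $H_q(G_v,\bb Q)=0$ for $q>0$, and the Mayer--Vietoris sequence collapses to $H_i(G(A),\bb Q)\cong H_i(\overline X,\bb Q)$, from which the conclusion follows via the homotopy equivalence $\overline X \simeq Y$. For $M$ finite of order prime to $p$, the same vanishing of $H_q(N_v,M)$ in positive $q$ reduces the computation to the finite groups $F_v$; combined with the fact that, for vertices $v$ deep enough in the ray, the inclusion $G_e\hookrightarrow G_v$ induces isomorphisms on $H_{*}(\cdot,M)$ (since the $F_v$'s and the actions on $M^{N_v}$ stabilize), one shows that the contribution of each cusp is finite, hence so is $H_i(G(A),M)$. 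For general finitely generated $M$, the $\ell$-primary parts with $\ell\neq p$ are handled as above, while the $p$-primary contributions from the infinitely many $N_v$'s accumulate into a countable $p$-primary torsion group, exactly as in Serre's analysis.

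The main obstacle is this last point: keeping precise control of the $p$-primary torsion contribution for general finitely-generated $M$. This requires using the explicit nested structure of the $G_v$ along a cusp ray provided by Theorem~\ref{main thm amalgam} to show that the direct sum of the $H_q(N_v,M)$ injects into $H_i(G(A),M)$ only as a countable $p$-primary torsion subgroup, and does not produce some uncountable or otherwise pathological contribution.
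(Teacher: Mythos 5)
Your proposal is correct in substance and rests on the same two pillars as the paper's argument: the finiteness of the body $Y$ and its stabilizers, and the structure of the cusp stabilizers as extensions of a finite prime-to-$p$ subgroup of $\bb F^{*}$ by a locally finite $p$-group, analyzed through the Lyndon--Hochschild--Serre spectral sequence. The route differs in the decomposition used. You run the equivariant homology sequence over the \emph{full} cell structure of $\overline X$, so the infinite cusp rays contribute infinitely many vertex and edge terms, and you must then prove a stabilization/telescope statement (that $G_e\hookrightarrow G_v$ eventually induces isomorphisms on homology along each ray) to collapse each cusp's contribution. The paper instead first collapses each ray to a single group: it uses the amalgam $G(A)=\mathcal H *_{G_{v_\sigma}}\mathcal G_\sigma$ of Theorem \ref{main thm amalgam}, realized by an auxiliary tree $T_0$ with one vertex for $\cal H$ and one vertex per cusp, which yields a long exact sequence (Proposition \ref{prop exact sequence}) with only finitely many terms per degree. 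Since $\mathcal G_\sigma=\stab(u,v)=\bigcup_n\stab(u,v,n)$ is already the direct limit of the stabilizers along the ray, the telescope argument is absorbed into the identification of $\mathcal G_\sigma$ and the fact that group homology commutes with direct limits; the body's contribution is controlled by observing that $\cal H$ is virtually free of finite rank. Your approach buys nothing extra here and costs the stabilization lemma, but it is workable. Two slips to fix: the unipotent kernel $N_v$ (the group $H(u,v)$ of Lemma \ref{Lemma M}) is a nilpotent $p$-group of class two, not abelian in general --- only the $p$-group property is needed, so the argument survives; and for general finitely generated $M$ you cannot literally ``split into $\ell$-primary parts'' since $M$ need not be torsion --- the correct mechanism, as in Proposition \ref{prop homo}(c), is that the mixed terms $H_i(F_\sigma,H_j(U_\sigma,M))$ with $i,j\neq 0$ vanish, the row $j=0$ gives finite prime-to-$p$ groups in positive degree, and the column $i=0$ gives quotients of the countable $p$-primary torsion groups $H_j(U_\sigma,M)$.
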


Finally, in \S\ref{examples} we illustrate these results by giving explicit computations of the quotient graph $\overline{X}$ and the structure of $G(A)$ in some cases where $C = \mathbb{P}^1_{\mathbb{F}}$ and $\deg(P)=1$. In particular, we get the following analog to Nagao's Theorem for $\SL_2$ (see Theorem \ref{thm Nagao++}):

\begin{Thm}
Let $\bb F$ be a field of characteristic $\neq 2$, let $h:\bb F[\sqrt{t}]^3\to\bb F[t]$ be the hermitian form defined by
\[h(x_{-1},x_0,x_1):=x_{-1}\bar x_1+x_0\bar x_0+x_1\bar x_{-1},\]
and let $\su(h_{\bb F[t]})$ be the subgroup of $\SL_3(\bb F[\sqrt{t}])$ of matrices that preserve $h$. Consider the following subgroups of $\su(h_{\bb F[t]})$:
\begin{align*}
F &:=\su(h_{\bb F[t]})\cap \GL_3(\bb F)\simeq \mathrm{SO}_3(\bb F),\\
\bb T &:=\{\text{upper triangular matrices in }\su(h_{\bb F[t]})\}\\
&=\left\{\begin{pmatrix}t & -\bar x & t^{-1}y\\0 & 1 & t^{-1} x \\0 & 0& t^{-1}\end{pmatrix},\ t \in \mathbb{F}^*=\bb F[\sqrt{t}]^*,\ x,y \in \mathbb{F}[\sqrt{t}],\ N(x) + T(y)=0\right\}.
\end{align*}
Then the group $\su(h_{\bb F[t]})$ is the sum of the groups $F$ and $\bb T$ amalgamated along their intersection.
\end{Thm}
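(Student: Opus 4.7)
The approach is to apply Theorems \ref{principal result} and \ref{main thm amalgam} in the specific setting $C = \bb P^1_{\bb F}$, $P = \infty$, so that $A = \bb F[t]$ and the integral closure of $A$ in the ramified extension $L = \bb F(\sqrt t)$ is $B = \bb F[\sqrt t]$, a polynomial ring in one indeterminate. Consequently $\pic(B) = 0$, and Theorem \ref{principal result} gives a decomposition $\overline X = Y \cup \cf(\sigma_0)$ with $Y$ connected and $\cf(\sigma_0)$ a single cuspidal ray meeting $Y$ at one common vertex $v_0 := v_{\sigma_0}$. The proof then reduces to checking that $Y = \{v_0\}$ and that the vertex and edge stabilizers realise the groups $F$, $\bb T$, and $F \cap \bb T$ of the statement.

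I would take $v_0$ to be the vertex of $X$ fixed by the parahoric $\su(h_{\cal O_P})$ attached to the reductive integral model. Its stabilizer in $G(A) = \su(h_A)$ is $\su(h_{\cal O_P}) \cap \su(h_A) = \su(h_{\bb F})$, since $\bb F[t] \cap \cal O_P = \bb F$ and $B \cap \cal O_Q = \bb F$ (on $\bb P^1_\bb F$ the only global functions regular at $Q$ are constants). The restriction of $h$ to $\bb F^3$ is the symmetric bilinear form $x_0^2 + 2 x_{-1} x_1$, which is equivalent to the standard split form since $\mathrm{char}(\bb F) \neq 2$; hence $\su(h_{\bb F}) \simeq \mathrm{SO}_3(\bb F) = F$. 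The main obstacle is to prove $Y = \{v_0\}$: this does not follow from Theorem \ref{principal result} alone, whose finiteness clause applies only when $\bb F$ is finite. I would establish it by an explicit Iwasawa-type reduction along the standard cusp sector, analogous to Soul\'e's and Nagao's reductions in the split case: any vertex of $X$ can be moved by an element of $\su(h_{\cal O_P})$ into the closure of the standard sector, and then by an element of $\bb T$ into the standard ray $v_0, v_1, v_2, \ldots$. The extra input needed over the $\SL_2$ case is the Euclidean algorithm applied to the constraint $N(x) + T(y) = 0$ defining $\bb T$.

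Once $\overline X$ is identified with a single ray, the vertex stabilizers $G_{v_n}$ for $n \geq 1$ form a nested chain of ``upper triangular'' subgroups of $\bb T$, each consisting of elements whose entries satisfy degree bounds in $t$ that relax as $n$ grows, with directed union equal to $\bb T$. Each edge $e_n = (v_n, v_{n+1})$ oriented toward the cusp satisfies $G_{e_n} = G_{v_n}$ for $n \geq 1$, while the first edge satisfies $G_{e_0} = G_{v_0} \cap G_{v_1} = F \cap \bb T$. The fundamental group of this ray of groups collapses to the amalgam $F *_{F \cap \bb T} \bb T$, and Theorem \ref{main thm amalgam} (whose combinatorial finiteness hypothesis on $\overline X$ is now satisfied) gives $G(A) \simeq F *_{F \cap \bb T} \bb T$, as stated.
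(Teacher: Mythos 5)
Your reduction of the statement to the claim that $\overline X$ is a single ray is the right strategy, and several ingredients are correct: $\pic(B)=0$ for $B=\bb F[\sqrt t]$, so Proposition \ref{prop cusps} gives exactly one cusp; the stabilizer of the special vertex is $G(C)=G(A)\cap G(\cal O_P)=\su(h_{\bb F})\simeq \mathrm{SO}_3(\bb F)=F$; and the directed union of the vertex stabilizers along the cusp ray is $\bb T$, so that once $\overline X$ is known to be a ray, Bass--Serre theory collapses the ray of groups to $F*_{F\cap\bb T}\bb T$. You also correctly observe that the finiteness clause of Theorem \ref{principal result} does not apply over an arbitrary $\bb F$ and that the whole content is the identification $Y=\{v_0\}$.

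That identification is precisely where your argument has a gap. The ``Iwasawa-type reduction'' you propose moves an arbitrary vertex into the standard sector by an element of $\su(h_{\cal O_P})$; but $\su(h_{\cal O_P})$ is not contained in $G(A)=\su(h_{\bb F[t]})$, so such a move does not produce a $G(A)$-equivalence and says nothing about $\overline X=G(A)\backslash X$. To make a reduction of this type work you would have to replace $\su(h_{\cal O_P})$ by $G(C)=\su(h_{\bb F})$ and actually carry out the Euclidean-algorithm step on the constraint $N(x)+T(y)=0$, which you only gesture at. The paper avoids this entirely and argues combinatorially: (i) by the analysis of \S\ref{section lambda 0 infty} in the ramified case, $G(C)\simeq\mathrm{SO}_3(\bb F)$ acts transitively on the isotropic lines of the residual quadratic form, i.e.\ on the edges issuing from $\lambda_0(\infty)$, so $\overline{\lambda_0(\infty)}$ has valency $1$ in $\overline X$; (ii) the explicit computations $\widetilde{\q_{0,0}}=B$, $I(0,0)=B$, $M(0,0,n)=B[\frac n2]$ give $N(0,0)=0$, so the unique cusp ray $\cf(0,0)$ already starts at $\overline{\lambda_1(0,0)}$, every vertex of which has valency at most $2$ by Lemma \ref{lemma valency cusps}; (iii) since $Y$ is connected, any vertex of $Y$ other than $\overline{\lambda_0(0,0)}$ would have to reach $\overline{\lambda_1(0,0)}$ by a path through $\overline{\lambda_0(0,0)}$, contradicting valency $1$ there. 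This forces $Y=\{\overline{\lambda_0(0,0)}\}$ with no fundamental-domain computation. Your write-up would be complete if you either supplied the missing reduction inside $G(A)$ or substituted this valency argument for it.
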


There is another motivation to study the quotient graph $\overline{X}$. Classifying
hermitian vector bundles over $C$ with a fixed generic fiber is equivalent to classifying
$\su(h_K)$-orbits of $C$-lattices in a fixed hermitian space. It is often the case 
that the classification of $A$-lattices can be achieved locally, for example when
$h$ is isotropic at $P$, as in our case. Therefore, it suffices to classify $C$-lattices
with a fixed restriction to $\mathrm{Spec}(A)$. Any such lattice is completely determined by 
its $P$-component, whence their isomorphism classes are in correspondence with the vertices
of $\overline{X}$. A full description of $\overline{X}$, therefore, yields also a full 
description of the set of such bundles. This is a hard problem in general. See \S\ref{section lambda 0 infty} for some computations of the valency at a special vertex.

\begin{Rq}
Looking back on Theorem \ref{thm witzel}, or the results by Lubotzky and Harder mentioned above, one could wonder whether our results could be extended to rings of the form $\cal O_S$, where $S$ is a finite set of closed points in $C$. We leave this study for later mainly because of the following issue. Since we are restricting our study to trees by following Mason's approach, we are avoiding the general theory of buildings in order to gain in simplicity. However, for any closed point $P$ in $C$ that is split in $D$ we have $G_{K_P}=\SL_{3,K_P}$, so that it is not a tree but a 2-dimensional building that is associated to this point (this is why we assume $P$ to be non-split above). We could still try to study the case of several non-split points, but in that case an application of the Strong Approximation Theorem tells us that the quotient graph of the tree associated to $G(K_P)$ for $P\in S$ is simply an edge. The natural thing to do then is actually studying the action of $G(\cal O_S)$ on the product of all such trees, but then we are back to higher-dimensional buildings.

Note however that the case of a single point is the crucial one for applications in the study of such groups as amalgams. Indeed, we can write a group of the form $G(\cal O_S)$ as an amalgam of two groups of the form $G\left(\cal O_{S-\{P\}}\right)$
by considering the local quotient at $P$, which, as mentioned above, 
is an edge. When $S$ has two points, one of such subgroups is the one being considered here, while the other is the stabilizer of a lattice of prime discriminant. See \S\ref{sec lattices} for the relation between trees and lattices.
\end{Rq}

\section{Notations and preliminaries}
\subsection{Conventions on graphs and trees}\label{sec trees}

In this section we recall some basic notions about graphs and trees. A graph $\mathfrak{g}$ consists of a pair of sets $v=v(\mathfrak{g})$ and $e=e(\mathfrak{g})$, and three functions $s,t:e\rightarrow v$ and $r:e\rightarrow e$ satisfying the identities $r(a)\neq a$ , $r\big(r(a)\big)=a$ and $s\big(r(a)\big)=t(a)$, for every $a \in e$. The set $v$ and $e$ are called set of vertices and edges respectively, and the functions $s,t$ and $r$ are called respectively source, target and reverse. A simplicial map $\gamma:\mathfrak{g} \rightarrow \mathfrak{g}'$ between graphs is a pair of functions
$\gamma_v:v(\mathfrak{g})\rightarrow v(\mathfrak{g}')$ and $\gamma_e:e(\mathfrak{g})\rightarrow e(\mathfrak{g}')$  preserving these functions, and a similar convention applies to group actions. An action of a group $\Gamma$
on a graph $\mathfrak{g}$ does not have inversions if $g.a\neq r(a)$ for every edge $a$ and every element $g\in\Gamma$. An action without inversions defines a quotient graph in the usual sense.

Let $\mathfrak{g}$ be a graph. A \textbf{ray} $\mathfrak{r}$ in $\mathfrak{g}$ is a subcomplex of $\mathfrak{g}$ whose sets of vertices $\lbrace v_i \in v(\mathfrak{r}): i\in \mathbb{Z}_{\geq 0}\rbrace$ and edges $\lbrace e_i \in e(\mathfrak{r}): i\in \mathbb{Z}_{\geq 0}\rbrace$ satisfy the identities $s(e_i)=v_i$, $t(e_i)=v_{i+1}$ and $v_i \neq v_{j}$ for all $i \neq j$ in $\mathbb{Z}_{\geq 0}$. Let $\mathfrak{r}_1$ and $\mathfrak{r}_2$ be rays whose set of vertices are $\left\lbrace v_i^{(1)}\in v(\mathfrak{r}_1): i\in \mathbb{Z}_{\geq 0}\right\rbrace$ and $\left\lbrace v_i^{(2)} \in v(\mathfrak{r}_2): i\in \mathbb{Z}_{\geq 0}\right\rbrace$ respectively. We say that $\mathfrak{r}_1$ and $\mathfrak{r}_2$ are equivalent, and we write $\mathfrak{r}_1 \sim \mathfrak{r}_2$, if there exists $t, i_0 \in \mathbb{Z}_{\geq 0}$ such that $v_{i}^{(1)}= v_{i+t}^{(2)}$, for all $i \geq i_0$. This definition applies to $\mathfrak{g}=X$, $\mathfrak{g}=\bar{X}$ or any of their subcomplexes. The equivalence class of a ray $\rf$ is denoted $\partial_{\infty}(\rf)$, and it is called the \textbf{visual limit} of $\rf$. Analogously, the set of equivalence classes of rays is denoted by $\partial_{\infty}(X)$. The same definitions and notation applies to any unbounded subtree of $X$.

A \textbf{cusp ray} in a graph $\mathfrak{g}$ is a ray where every non-initial vertex has valency two in $\mathfrak{g}$. We say that a graph is \textbf{combinatorially finite} if it is the union of a finite graph and a finite number of cusp rays. By definition a cusp in $\mathfrak{g}$ is an equivalence class of cusp rays. In the context of Theorem \ref{principal result} a set of representatives for the (rational) cusps is precisely $\lbrace \cf(\sigma): \sigma \in \pic(B) \rbrace$.

\subsection{Parametrization of subgroups of \texorpdfstring{$\mathrm{SU}(h)$}{SU(h)}}\label{sec param}

We recall some known facts on the algebraic $K$-group $G_K = \mathrm{SU}(h_K)$ (which are actually true over fields of arbitrary characteristic, see for instance \cite[4.1]{BruhatTits2}, or \cite[§4 Case 2. p. 43--50]{Landvogt}) and we extend some of these to the $C$-group scheme $G$ while fixing notations on the way.\\

We will use repeatedly the norm and trace map of the quadratic extension $L/K$, denoted by $N = N_{L/K}$ and $T = \operatorname{Tr}_{L/K}$, respectively. It is also convenient to introduce the following $K$-varieties:
\begin{align*}
H(L,K) &:= \left\{ (u,v) \in R_{L/K}(\mathbb{G}_{a,L})^2,\ N(u) + T(v) = 0\right\},\\
H(L,K)^0 &:= \left\{ (0,v) \in 0 \times R_{L/K}(\mathbb{G}_{a,L}),\ T(v) = 0\right\},
\end{align*}
We will abusively denote by $H(L,K)$ and $H(L,K)^0$ the corresponding sets of $K$-points, which we will naturally view as subsets of $L^2$. Note that, if $(u,v) \in H(L,K)$, then we also have $(\bar{u},v),(u,\bar{v}),(\bar{u},\bar{v}) \in H(L,K)$. Moreover, one can endow these varieties with a natural group structure from a matrix realization given further below.\\

We recall that $G_K$ is a quasi-split semisimple $K$-group which admits a natural $K$-linear faithful representation $G_K \to R_{L/K}(\mathrm{SL}_{3,L})$. Since $D\to C$ is finite and flat, the Weil restriction $R_{D/C}(\mathrm{SL}_{3,C})$ is well-defined and it is easy to see that the representation above extends to an embedding $G\to R_{D/C}(\mathrm{SL}_{3,C})$ (actually, $G$ is the schematic closure of $G_K$ in $R_{D/C}(\mathrm{SL}_{3,C})$). Through this representation, we identify $G$ with its image and the elements of $G(K)$ (resp.~$G(A)$, $G(K_P)$, $G(\cal O_P)$) with $3\times 3$ matrices with coefficients in $L$ (resp.~$B$, $L_Q$, $\cal O_Q$) preserving the hermitian form and with trivial determinant.

Given this matrix interpretation of $K$-points and $A$-points of $G$, we see that the group $G(C)\subset G(K)$ of $C$-points corresponds to matrices in $\SL_{3,L}$ with coefficients in $\bb F$. In particular, we will be using the fact that $G(C)=G(A) \cap G(\mathcal{O}_P)$. The same goes for $\mathcal{T}(C)$, which corresponds to diagonal matrices in $G(C)$.

A maximal $K$-split torus $\mathcal{S}$ of $G$ is given by the matrices of the form:
\[
    (2a)^\vee(s) = \begin{pmatrix} s & 0& 0\\ 0 & 1 & 0 \\ 0 & 0 & s^{-1} \end{pmatrix} \qquad s \in \mathbb{G}_{m,K}
\]
and its centralizer $\mathcal{T} = \mathcal{Z}_G(\mathcal{S})$, that is a maximal $K$-torus since $G$ is quasi-split, admits the following parametrization with diagonal matrices:
\[
    \begin{array}{cccc}
    \widetilde{a}:& R_{L/K}(\mathbb{G}_{m,L}) &\to  & \mathcal{T} \subset G\\
     & t &\mapsto & \begin{pmatrix} t & 0& 0\\ 0 & \bar{t} t^{-1} & 0 \\ 0 & 0 & \bar{t}^{-1} \end{pmatrix}
    \end{array}.
\]
Note that $\widetilde{a} : R_{L/K}(\mathbb{G}_{m,L}) \to \mathcal{T}$ is the extension of $(2a)^\vee : \mathbb{G}_{m,K} \to \mathcal{S} \subset \mathcal{T}$.

The maximal $K$-torus $\mathcal{T}$, and therefore $G$, splits over $L$ onto the torus:
\[
	\mathcal{T}_L = \left\{
		\begin{pmatrix}
			x & & \\ & y & \\ & & z
		\end{pmatrix},\
		xyz = 1 \right\}
		\cong \mathbb{G}_{m,L}^2
\]
having a basis of characters over $L$ given by:
\[
	\alpha
		\begin{pmatrix}
			x & & \\ & y & \\ & & z
		\end{pmatrix}
		= y z^{-1}
	\qquad \text{ and } \qquad
	\bar{\alpha}
		\begin{pmatrix}
			x & & \\ & y & \\ & & z
		\end{pmatrix}
		= x y^{-1}.
\]

We denote the restriction of $\alpha$ to $\mathcal{S}$,
which coincides with that of $\bar{\alpha}$ by $a = \alpha|_{\mathcal{S}} = \bar{\alpha}|_{\mathcal{S}}$;
it is the character generating $X^*_K(\mathcal{S})$.
We denote the restriction of $\alpha + \bar{\alpha}$ to $\mathcal{S}$ by $2a = (\alpha + \bar{\alpha})|_{\mathcal{S}}$.

The $K$-root system of $G$ is of type $BC_1$ and we write it as $\Phi = \{\pm a, \pm 2a\}$, so that $(2a)^\vee$ is the coroot of $2a$, and it generates the $\mathbb{Z}$-module of cocharacters of $\mathcal{S}$.
We pick the Borel $K$-subgroup $\mathcal{B}$ of $G$ consisting in upper-triangular matrices, which contains $\mathcal{T}$.
It corresponds to positive roots $\{a, 2a\}$.
We parametrize root groups by:
\[
    \begin{array}{cccc}
    \mathrm{u}_a:& H(L,K) &\to  & \mathcal{U}_a \subset G\\
     & (u,v) &\mapsto & \begin{pmatrix} 1 & -\bar{u} & v \\ 0 & 1 & u \\ 0 & 0 & 1\end{pmatrix}
    \end{array}
    \qquad \text{ and } \qquad
    \begin{array}{cccc}
    \mathrm{u}_{2a}:& H(L,K)^0 &\to  & \mathcal{U}_{2a} \subset G\\
     & v &\mapsto & \begin{pmatrix} 1 & 0 & v \\ 0 & 1 & 0 \\ 0 & 0 & 1\end{pmatrix}
    \end{array}.
\]
The Weyl group $W(G,\cal S) = \mathcal{N}_G(\cal S) / \mathcal{Z}_G(\cal S)$ is of order $2$.
We denote by $\mathrm{s}$ the lift of the non-trivial element, that exchanges $a$ with $-a$, and whose matrix realization is
\[
    \mathrm{s} = \begin{pmatrix} 0 & 0 & -1\\ 0 & -1 & 0\\ -1 & 0& 0\end{pmatrix}.
\]
The parametrization of root groups for a negative root is given by the formula
\[
    \mathrm{u}_{-a}(u,v) = \mathrm{s} \mathrm{u}_a(u,v) \mathrm{s}.
\]
Matricially, this gives:
\[
    \begin{array}{cccc}
    \mathrm{u}_{-a}:& H(L,K) &\to  & \mathcal{U}_{-a} \subset G\\
     & (u,v) &\mapsto & \begin{pmatrix} 1 & 0 & 0 \\ u & 1 & 0 \\ v & -\bar{u} & 1\end{pmatrix}
    \end{array}
    \qquad \text{ and } \qquad
    \begin{array}{cccc}
    \mathrm{u}_{-2a}:& H(L,K)^0 &\to  & \mathcal{U}_{-2a} \subset G\\
     & v &\mapsto & \begin{pmatrix} 1 & 0 & 0 \\ 0 & 1 & 0 \\ v & 0 & 1\end{pmatrix}
    \end{array}.
\]

\section{Stabilizers of vertices}\label{sec stab}

\begin{Def}\label{def g}
Define, for $(u,v)\in H(L,K)$, the element:
\[g_{u,v} = \mathrm{u}_{-a}(u,v) \mathrm{s} = \mathrm{s} \mathrm{u}_{a}(u,v)=
    \begin{pmatrix}
        0 & 0& -1\\
        0 & -1 & -u\\
        -1 & \bar u & -v
    \end{pmatrix}\in G(K)
\]
This element will be used repeatedly in this article.
\end{Def}

Using these elements, we can parametrize some geodesic rays in $X$ as follows.
Recall that a standard apartment is defined from $\mathcal{S}$ by $\mathbb{A} = X_*(\mathcal{S}) \otimes \mathbb{R} \cong \mathbb{R} a^\vee$ with vertices $x = r a^\vee$ satisfying $a(x) = 2r \in \Gamma_a = \frac{1}{2} \omega(L_P^\times)$ \cite[4.2.21(4) and 4.2.22]{BruhatTits2}.
If $e_P$ denotes the ramification index of $L_P/K_P$ and if the valuation $\omega$ in $P$ is normalized such that $\omega(K_P^*) = \mathbb{Z}$, then $r \in \frac{1}{4e_P} \mathbb{Z}$.

We recall from \cite[6.2.10]{BruhatTits1} and \cite[4.2.7]{BruhatTits2} that the action of $\mathcal{N}_G(\mathcal{S})(K_P)$ onto $\mathbb{A}$ can be deduced from the formulas:
\[
    \widetilde{a}(v) \cdot x = x - \frac{1}{2} \omega(v) a^\vee
    \qquad \text{ and } \qquad
    \mathrm{s} \cdot x = -x
\]
Indeed, the formula $\langle\nu(t),a\rangle = - \omega \circ a (t)$ from \cite[4.2.7]{BruhatTits2}, applied with $t = \widetilde{a}(v)\in \mathcal{T}(K)$ and $\nu(t) = r a^\vee$, gives that $2r = - \omega\Big(a\big(\widetilde{a}(v)\big)\Big) = - \omega\Big(v^2 (\bar{v})^{-1}\Big)$.

\begin{Def}
We define the half-apartment $\rf(\infty)$ of $\mathbb{A}$ by
\[\rf(\infty):=\{x \in \mathbb{A},\ a(x) \geqslant 0\}.\]
We define moreover a numbering of its vertices $v(\rf(\infty))$ as follows:
\[\lambda_n(\infty):=\frac{n}{4e_P} a^\vee \in \rf(\infty),\quad \text{for } n \in \mathbb{Z}_{\geqslant 0}.\]

For any $(u,v) \in H(L,K)$, we define another half-apartment $\rf(u,v) = g_{u,v}^{-1} \cdot \rf(\infty)$ with a numbering of its vertices by $\lambda_n(u,v) = g_{u,v}^{-1} \cdot \lambda_n(\infty)$.
\end{Def}

\begin{Lem}\label{lem density of H}
The set $H(L,K)$ is a dense subset of $H(L_Q,K_P)$.
\end{Lem}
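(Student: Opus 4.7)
The plan is to reduce the claim to the classical density of $L$ in $L_Q$ (and of $K$ in $K_P$) by realising both $H(L,K)$ and $H(L_Q,K_P)$ as affine spaces through an explicit parametrization. Since $\car(\F)\neq 2$, I would first pick once and for all an element $\delta\in L$ satisfying $\delta+\bar{\delta}=0$ and $\delta^2\in K^\times$, yielding the decomposition $L = K\oplus K\delta$ of $K$-vector spaces. Because $L/K$ is assumed non-split at $P$, the $K_P$-algebra $L_Q = L\otimes_K K_P$ is a quadratic field extension of $K_P$ and inherits the analogous decomposition $L_Q = K_P \oplus K_P\delta$.

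Next I would use this splitting to unpack the defining equation of $H$. Writing $v = v^+ + v^-\delta$ for any $v\in L_Q$ with $v^\pm \in K_P$, we have $T(v) = 2v^+$, so the constraint $N(u)+T(v)=0$ reduces simply to $v^+ = -\tfrac{1}{2}N(u)$, with $v^- \in K_P$ left free. This yields an isomorphism of affine $K_P$-varieties
\[
\Phi_Q : R_{L_Q/K_P}(\ga{L_Q})\times \ga{K_P} \longrightarrow H(L_Q,K_P), \qquad (u,w)\longmapsto \left(u,\,-\tfrac{1}{2}N(u)+w\delta\right),
\]
whose inverse $(u,v)\mapsto (u,v^-)$ is also polynomial, hence continuous. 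Restricting the same formula to $K$-points gives a bijection $L\times K \to H(L,K)$ compatible with $\Phi_Q$.

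Finally, I would invoke the density of $L$ in $L_Q$ and of $K$ in $K_P$, both standard facts for function fields and their completions (the former relying precisely on the fact that $L_Q$ is a field, so $L$ embeds diagonally as a dense subfield of the local field $L_Q$). These two density statements together yield that $L\times K$ is dense in $L_Q\times K_P$, and applying the continuous homeomorphism $\Phi_Q$ transports this density to the announced statement $H(L,K)$ dense in $H(L_Q,K_P)$.

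I do not anticipate any serious obstacle here: the argument is essentially a matter of unraveling the equation $N(u)+T(v)=0$ so as to display $H$ as a trivial affine bundle. The only point that requires attention is to record \emph{why} the hypothesis that $L/K$ is non-split at $P$ is used, namely that it is exactly what guarantees that $L_Q$ is a field (so that $L$ is dense in it) and that the decomposition $L_Q = K_P\oplus K_P\delta$ inherited from $L$ makes sense as a splitting into trace-invariant and trace-antiinvariant lines.
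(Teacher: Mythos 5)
Your proof is correct and follows essentially the same route as the paper, which simply observes that $H(L,K)$ is a $K$-split unipotent group of dimension $3$, hence isomorphic to $\A^3_K$ as a $K$-variety, and then invokes density of $K$-points in $K_P$-points of affine space. Your explicit parametrization $(u,w)\mapsto(u,-\tfrac12 N(u)+w\delta)$ just spells out the isomorphism with affine space that the paper leaves implicit, so there is nothing to add.
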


\begin{proof}
$H(L,K)$ is a $K$-split unipotent group of dimension $3$, whence isomorphic to $\mathbb{A}^3_K$ as $K$-variety.
\end{proof}

\begin{Lem}\label{lem cover}
\[X = \bigcup_{(u,v) \in H(L,K)} \{1,\mathrm{s}\}\cdot \rf(u,v)\]
\end{Lem}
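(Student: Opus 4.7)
The plan is to prove this in three steps: a reformulation, a geometric dichotomy at the $K_P$-level, and a density descent to $K$-points using Lemma~\ref{lem density of H}.

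First, I would reformulate the statement. Using Definition~\ref{def g} and $\mathrm{s}^2=1$, one has $g_{u,v}^{-1} = \mathrm{u}_a(u,v)^{-1}\mathrm{s}$ and hence $\rf(u,v) = \mathrm{u}_a(u,v)^{-1}\mathrm{s}\,\rf(\infty)$, while the relation $\mathrm{s}\,\mathrm{u}_a(u,v)\,\mathrm{s} = \mathrm{u}_{-a}(u,v)$ gives $\mathrm{s}\cdot\rf(u,v) = \mathrm{u}_{-a}(u,v)^{-1}\,\rf(\infty)$. Since $(u,v)\mapsto(u,v)^{-1}$ is a bijection of the group $H(L,K)$, the statement becomes
\[
X \;=\; \mathcal{U}_a(K)\cdot\mathrm{s}\,\rf(\infty)\;\cup\;\mathcal{U}_{-a}(K)\cdot\rf(\infty).
\]
Recalling that $\mathrm{s}\cdot\lambda_n(\infty) = \lambda_{-n}(\infty)$, the problem amounts to showing that every vertex $x$ can be written as $\mathrm{u}_a(u,v)\cdot\lambda_m(\infty)$ with $m\le 0$ and $(u,v)\in H(L,K)$, or as $\mathrm{u}_{-a}(u,v)\cdot\lambda_m(\infty)$ with $m\ge 0$ and $(u,v)\in H(L,K)$.

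Second, I would establish a geometric dichotomy. Let $y=\lambda_n(\infty)$ be the vertex of $\mathbb{A}$ closest to $x$ and set $d=d(x,y)\ge 0$. Extending the unique geodesic in the tree from $x$ to the visual limit of $\rf(\infty)$ backward through $x$ produces an apartment $\mathbb{A}_x^+$ containing $x$ and sharing a sub-ray with $\rf(\infty)$ (the portion of index $\ge\max(n,0)$). Since $G(K_P)$ acts transitively on apartments of $X$ and $\mathcal{T}(K_P)$ preserves $\mathbb{A}$ setwise, standard Bruhat--Tits theory yields $(u_0,v_0)\in H(L_Q,K_P)$ with $\mathbb{A}_x^+ = \mathrm{u}_a(u_0,v_0)\,\mathbb{A}$, and a direct computation tracking positions along the shared sub-ray shows $\mathrm{u}_a(u_0,v_0)^{-1}x = \lambda_{n-d}(\infty)$. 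The analogous construction with the opposite visual limit of $\mathrm{s}\,\rf(\infty)$ yields $(\tilde u_0,\tilde v_0)\in H(L_Q,K_P)$ with $\mathrm{u}_{-a}(\tilde u_0,\tilde v_0)^{-1}x = \lambda_{n+d}(\infty)$. Since $d\ge 0$, the inequalities $n-d\le 0$ and $n+d\ge 0$ cannot both fail, so at least one of these two presentations places $x$ in the correct half of its apartment.

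Finally, I would descend to $K$-points via Lemma~\ref{lem density of H}. The Bruhat--Tits root-group filtration $\{\mathcal{U}_{\pm a,k}\}_{k\in\mathbb{R}}$ has the property that $\mathcal{U}_{\pm a,k}$ fixes the half-apartment $\{z\in\mathbb{A}:\pm a(z)+k\ge 0\}$ pointwise; in particular, for $k$ large enough, $\mathcal{U}_{\pm a,k}$ fixes any prescribed vertex of $\mathbb{A}$. By density, pick $(u_1,v_1)\in H(L,K)$ so that $\mathrm{u}_a(u_1,v_1)^{-1}\mathrm{u}_a(u_0,v_0)\in\mathcal{U}_{a,k}$ for some $k$ large enough to fix $\lambda_{n-d}(\infty)$: then $\mathrm{u}_a(u_1,v_1)^{-1}x = \lambda_{n-d}(\infty)$ as well, placing $x\in\mathcal{U}_a(K)\cdot\mathrm{s}\,\rf(\infty)$ when $n-d\le 0$; the symmetric argument with $\mathcal{U}_{-a}$ handles the case $n+d\ge 0$. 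The most delicate point, which I expect to require the bulk of the work, is the explicit computation of the retraction images $\lambda_{n\pm d}(\infty)$: this amounts to carefully matching the parametrizations of $\mathbb{A}_x^\pm$ and $\mathbb{A}$ along their shared sub-rays at infinity, and identifying the element $(u_0,v_0)\in H(L_Q,K_P)$ realizing the Bruhat--Tits transitivity of $\mathcal{U}_a(K_P)\cdot\mathcal{T}(K_P)$ on apartments through the chosen visual limit.
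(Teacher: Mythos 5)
Your proposal is correct and follows essentially the same route as the paper's proof: the dichotomy on the position of the projection of $x$ onto $\mathbb{A}$ (so that $x$ lies in an apartment containing one of the two half-apartments $\rf(\pm\infty)$, equivalently one of your two retraction indices $n\mp d$ lands in the correct half), the transitivity of the valued root group on apartments containing that half-apartment (the paper cites \cite[9.7(i)]{Landvogt} for exactly the fact you invoke), and the descent from $H(L_Q,K_P)$ to $H(L,K)$ via Lemma~\ref{lem density of H} together with openness of the point stabilizer in the root group. The only cosmetic difference is that you make the retraction image $\lambda_{n\pm d}(\infty)$ explicit, whereas the paper argues softly that the image lies in $\rf(\infty)$.
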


\begin{proof}
Throughout this proof, we denote $\rf(0,0)$ by $\rf(-\infty)$. Then $\rf(\infty)= \mathrm{s} \cdot \rf(-\infty)\subset \mathbb{A}$.

Let $x \in X$ be any point. If $x \in \rf(\pm\infty)$, we are done. Otherwise, since $\rf(\infty)$ and $\rf(-\infty)$ are the sectors of $\mathbb{A}$ with tip $\lambda_0(\infty) = 0 a^\vee$, we know that there is an apartment $\mathbb{A}_x$ that contains $x$ and $\rf(\varepsilon \infty)$ for $\varepsilon \in \{\pm\}$. If $\bb{A}_x$ contains $\rf(\infty)$, then we know that $\mathrm{s} \cdot\bb{A}_x$ contains both $\rf(-\infty)$ and $\mathrm{s} \cdot x$.
Thus, we only need to prove that if $\bb A_x$ contains $\rf(-\infty)$, then $x\in \rf(u,v)$ for certain $(u,v)\in H(L,K)$.

We know by \cite[9.7(i)]{Landvogt} that the valued root group $U_{-a,\rf(- \infty)}$ acts transitively on the set of apartments of $X$ containing $\rf(-\infty)$.
Thus, there exists an element $(\tilde{u},\tilde{v}) \in H(L_P,K_P)$ such that $x' = \mathrm{u}_{- a}(\tilde{u},\tilde{v}) \cdot x \in \rf(\infty)$.
We know that the stabilizer of $x'$ in $U_{- a, \rf(- \infty)}$ is the open subgroup $U_{-a,x'}$ of $U_{-a,\rf(-\infty)}$ and that $U_{-a,\rf(-\infty)}$ acts continuously.
By continuity of the parametrization and by density of $H(L,K)$ in $H(L_P,K_P)$ (Lemma \ref{lem density of H}), there exists an element $(u,v) \in H(L,K)$ such that $\mathrm{u}_{- a}(u,v) \cdot x = x' \in \rf( \infty)$.
Thus $x \in \mathrm{u}_{-a}(u,v)^{-1} \cdot \rf(\infty) = \mathrm{s} (g_{u,v}^{-1}) \cdot \rf(\infty) = \mathrm{s} \cdot \rf(u,v)$.
\end{proof}

In the next lemmas, we observe that it is convenient to introduce the following fractional ideals, subrings and subsets:

\begin{Not}
For any $(u,v) \in H(L,K)$, we set $\mathfrak{p}_{u,v}$ to be the fractional $B$-ideal $B + Bu + Bv\subset L$ and $\mathfrak{b}_{u}:= B+Bu$,
so that:
\[
    \mathfrak{p}_{u,v}^{-1} =
    \begin{cases}
        B & \text{ if } v=0,\\
        B \cap \frac{1}{v} B & \text{ if } u= 0,\ v \neq 0,\\
        B \cap \frac{1}{v} B \cap \frac{1}{u} B & \text{ if } u\neq 0.
    \end{cases}
    \qquad \text{ and } \qquad
    \mathfrak{b}_u^{-1} = 
    \begin{cases}
        B & \text{ if } u=0,\\
        B \cap \frac{1}{u} B & \text{ if } u\neq 0.
    \end{cases}
\]
We define as well the fractional $B$-ideals
\[
    {\mathfrak{q}_{u,v}}^{-1}
    = \mathfrak{p}_{\bar{u},v} \mathfrak{p}_{u,\bar{v}}
    = B+Bu+Bv+B\bar{u}+B\bar{v}+Bu\bar{u}+Buv+B\bar{u}\bar{v}+Bv\bar{v}
\]
and
\[
    \widetilde{\mathfrak{q}_{u,v}}^{-1}
    = \mathfrak{b}_u \mathfrak{p}_{u,\bar{v}}
    = B+B\bar{v}+Bu+Bu^2+Bu\bar{v}
\]
so that
\[
    \mathfrak{q}_{u,v}
    = \begin{cases}
        B & \text{ if } v = 0,\\
        B \cap B\frac{1}{v}\cap B\frac{1}{\bar{v}}\cap B\frac{1}{v\bar{v}} & \text{ if } v \neq 0 \text{ and } u = 0,\\
        B\cap B\frac{1}{u} \cap B\frac{1}{v}\cap B\frac{1}{\bar{u}}\cap B\frac{1}{\bar{v}}\cap B\frac{1}{u\bar{u}}\cap B\frac{1}{uv}\cap B\frac{1}{\bar{u}\bar{v}}\cap B\frac{1}{v\bar{v}} & \text{ if } u \neq 0.\\
    \end{cases}
\]
and
\[
    \widetilde{\mathfrak{q}_{u,v}}
    = \begin{cases}
        B & \text{ if } v = 0,\\
        B \cap B\frac{1}{\bar{v}} & \text{ if } v \neq 0 \text{ and } u = 0,\\
        B\cap B\frac{1}{u} \cap B\frac{1}{\bar{v}}\cap B\frac{1}{u^2}\cap B\frac{1}{u\bar{v}} & \text{ if } u \neq 0.\\
    \end{cases}
\]
\end{Not}

\begin{Def}
We define the degree of a fractional $B$-ideal $\q$ as
\[\deg\q:=\sum_{P'\neq P}d_{P'}v_{P'}(\q),\]
where $d_{P'}$ denotes the degree of the closed point $P'\in C$ and $v_{P'}$ denotes the corresponding valuation. Note that $v_{P'}(\q)$ makes sense since the completion $\q_{P'}\subset K_{P'}$ is a principal fractional ideal generated by a power of a uniformizer.
\end{Def}

The obvious properties of $\deg$ that we will use in the results below are:
\begin{itemize}
\item $\deg \q\q'=\deg \q+\deg\q'$.
\item $\deg \q\leq \deg \q'$ if $\q\supset\q'$.
\item $\deg \q=\deg\bar\q$ for any $\q$.
\end{itemize}

\begin{Lem}\label{lem Idealic}
Let $(u,v) \in H(L,K)$.
If $n > \frac{2e_P}{d} \deg \widetilde{\mathfrak{q}_{u,v}}$, then
\[
    {\mathfrak{q}_{u,v}}^{-1} \cap \omega^{-1}\Big(\big[\frac{n}{e_P},\infty\big]\Big) = \{0\}
    \qquad \text{ and } \qquad
    \widetilde{\mathfrak{q}_{u,v}}^{-1} \cap \omega^{-1}\Big(\big[\frac{n}{2e_P},\infty\big]\Big) = \{0\}
.\]
\end{Lem}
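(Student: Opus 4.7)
The plan is to exploit the product formula on the function field $L = \mathbb{F}(D)$: for any $0 \neq x \in L$, one has $\sum_{Q'} d'_{Q'} v_{Q'}(x) = 0$, where $Q'$ ranges over the closed points of $D$, $d'_{Q'}=[\kappa(Q'):\mathbb{F}]$ is the degree of $Q'$, and $v_{Q'}$ is the normalized valuation. Since $L/K$ is non-split at $P$ with $e_P f_P = 2$, the point $Q$ of $D$ above $P$ is unique and has degree $d'_Q = f_P\cdot d = \frac{2d}{e_P}$. Moreover, the extension of $\omega$ to $L_Q$ satisfies $v_Q = e_P\omega$, so that the conditions $\omega(x)\geq n/e_P$ and $\omega(x)\geq n/(2e_P)$ translate to $v_Q(x)\geq n$ and $v_Q(x)\geq n/2$ respectively.

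For the first assertion, assume $0 \neq x \in \mathfrak{q}_{u,v}^{-1}$ with $\omega(x) \geq n/e_P$. Then $v_Q(x) \geq n$ and, for $Q' \neq Q$, $v_{Q'}(x) \geq -v_{Q'}(\mathfrak{q}_{u,v})$. Plugging these into the product formula yields
\[
  \tfrac{2dn}{e_P} \;=\; d'_Q \cdot n \;\leq\; d'_Q \cdot v_Q(x) \;\leq\; \sum_{Q'\neq Q}d'_{Q'} v_{Q'}(\mathfrak{q}_{u,v}).
\]
A place-by-place inspection shows that for the $\tau$-invariant ideal $\mathfrak{q}_{u,v}$ the right-hand side equals $2\deg \mathfrak{q}_{u,v}$ in the paper's sense (the factor $2$ arises uniformly over inert, split and ramified places of $C$). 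Hence $n \leq \frac{e_P \deg \mathfrak{q}_{u,v}}{d}$. To finish, I would use the factorizations $\mathfrak{q}_{u,v}^{-1}=\mathfrak{p}_{\bar u,v}\mathfrak{p}_{u,\bar v}$ and $\widetilde{\mathfrak{q}_{u,v}}^{-1}=\mathfrak{b}_u\mathfrak{p}_{u,\bar v}$, together with the $\tau$-invariance of $\deg$ (which gives $\deg\mathfrak{p}_{\bar u,v}=\deg\mathfrak{p}_{u,\bar v}$), to obtain $2\deg \widetilde{\mathfrak{q}_{u,v}} - \deg \mathfrak{q}_{u,v} = 2\deg \mathfrak{b}_u^{-1}\geq 0$, where the inequality uses $\mathfrak{b}_u\supset B$. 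Therefore $\deg \mathfrak{q}_{u,v}\leq 2\deg \widetilde{\mathfrak{q}_{u,v}}$, whence $n\leq \frac{2e_P}{d}\deg \widetilde{\mathfrak{q}_{u,v}}$, contradicting the hypothesis.

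For the second assertion, the same argument applied directly to $\widetilde{\mathfrak{q}_{u,v}}$ with $v_Q(x)\geq n/2$ gives $\tfrac{dn}{e_P}\leq \sum_{Q'\neq Q}d'_{Q'}v_{Q'}(\widetilde{\mathfrak{q}_{u,v}})$; the corresponding degree comparison then yields $n\leq \frac{2e_P}{d}\deg\widetilde{\mathfrak{q}_{u,v}}$, once again contradicting the hypothesis.

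The principal technical obstacle I anticipate is the compatibility between the paper's definition of $\deg$ for a fractional $B$-ideal (phrased in terms of places of $C$, with the requirement that each local completion be principal) and the natural divisorial degree on $D$ that is fed by the product formula. For $\tau$-invariant ideals the factor of $2$ comes out cleanly, whereas $\widetilde{\mathfrak{q}_{u,v}}$ is \emph{not} $\tau$-invariant. One therefore has to argue carefully, handling each type of place (inert, split, ramified) of $C$ separately, to confirm that the required inequality $\sum_{Q'\neq Q}d'_{Q'}v_{Q'}(\widetilde{\mathfrak{q}_{u,v}})\leq 2\deg\widetilde{\mathfrak{q}_{u,v}}$ holds directly, without the benefit of the $\tau$-invariance shortcut available for $\mathfrak{q}_{u,v}$.
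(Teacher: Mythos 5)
Your argument is correct and follows essentially the same route as the paper: the product formula, the lower bound on valuations coming from membership in the inverse ideal, and the key comparison $\deg \mathfrak{q}_{u,v}\leq 2\deg\widetilde{\mathfrak{q}_{u,v}}$, which you derive exactly as the paper does from the factorizations $\mathfrak{q}_{u,v}^{-1}=\mathfrak{p}_{\bar u,v}\mathfrak{p}_{u,\bar v}$ and $\widetilde{\mathfrak{q}_{u,v}}^{-1}=\mathfrak{b}_u\mathfrak{p}_{u,\bar v}$, the conjugation-invariance of $\deg$, and $\mathfrak{b}_u^{-1}\subseteq B$. The only genuine difference is where the product formula enters, and it is precisely this difference that creates the obstacle you flag at the end. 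You expand the product formula place by place on $D$ and must then reconcile the divisorial degree on $D$ with the paper's $\deg$, which is delicate for the non-$\tau$-invariant ideal $\widetilde{\mathfrak{q}_{u,v}}$. The paper instead packages the product formula once and for all as $d\omega(x)=-\deg\big((x)\big)$ for the principal fractional ideal $(x)=Bx$, and then only invokes the three formal properties of $\deg$ listed after its definition: from $(x)\subseteq\mathfrak{q}^{-1}$ and monotonicity one gets $\deg\mathfrak{q}^{-1}\leq\deg(x)$, hence $d\omega(x)\leq-\deg\mathfrak{q}^{-1}=\deg\mathfrak{q}$, for \emph{either} of the two ideals and with no case analysis on split, inert or ramified places and no appeal to $\tau$-invariance. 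If you set up the estimate in that form from the start, your ``principal technical obstacle'' disappears and both assertions follow uniformly: $\omega(x)\leq\frac{1}{d}\deg\widetilde{\mathfrak{q}_{u,v}}<\frac{n}{2e_P}$ for the second, and $\omega(x)\leq\frac{1}{d}\deg\mathfrak{q}_{u,v}\leq\frac{2}{d}\deg\widetilde{\mathfrak{q}_{u,v}}<\frac{n}{e_P}$ for the first.
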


\begin{proof}
Write $\q$ for either $\q_{u,v}$ or $\widetilde{\q_{u,v}}$ and let $x\in \q^{-1}$ be a non-zero element. Then the product formula gives
\[d\omega(x)+\sum_{P'\neq P}d_{P'}v_{P'}(x)=0.\]
In particular, if we denote by $(x)=Bx$ the fractional ideal generated by $x$, we get $d\omega(x)=-\deg(x)$ and, since clearly $\deg \q^{-1}\leq\deg(x)$, we get $d\omega(x)\leq-\deg\q^{-1}=\deg\q$. When $\q=\widetilde{\q_{u,v}}$, we see from the hypothesis on $n$ that
\[\omega(x)\leq \frac{1}{d}\deg\q_{u,v}<\frac{n}{2e_P}.\]
This implies the second identity. In order to get the first one, it will suffice to prove that $\deg\q_{u,v}\leq 2\deg\widetilde{\q_{u,v}}$. Now this falls easily from the definitions and the basic properties of $\deg$. Indeed, we have
\[\deg\widetilde{\q_{u,v}} =-\deg \mathfrak{b}_u -\deg\mathfrak{p}_{u,\bar v}=\deg \bk^{-1}_u-\deg\p_{u,\bar v}\geq -\deg\p_{u,\bar v},\]
the last inequality coming from the fact that $\bk_u^{-1}\subset B$, hence it has positive degree. And since $\p_{\bar u,v}=\overline{\p_{u,\bar v}}$, we get
\[\deg\widetilde{\q_{u,v}}\geq -\deg\p_{u,\bar v}=-\frac 12(\deg \p_{u,\bar v}+\deg \p_{\bar u,v})=\frac 12\deg \q_{u,v}.\]
This concludes the proof.
\end{proof}


Having Lemma \ref{lem cover} at hand, and recalling that $\mathrm{s}\in G(A)$. we can study stabilizers of vertices in $X$ for the action of $G(A)$ by analyzing the stabilizers of vertices of the form $\lambda_n(u,v)$ with $(u,v)\in H(L,K)$. The results that follow, and in particular Proposition \ref{prop stab}, can be seen then as an analogue of \cite[Lemma 3.4]{M} in the context where $G=\su(h)$.

We start however by studying a simpler type of subgroup, which are the stabilizers of the visual limits of the rays $\rf(u,v)$. In order to proceed, we need to fix some notations.\\

Recall that $G(C)$ and $\mathcal{T}(C)$ denote the subgroups of $G(K)$ and $\mathcal{T}(K)$ whose coefficients in the matrix representation are contained in $\bb F\subset K$. In particular, $\mathcal{T}(C)$ consists of elements $\widetilde{a}(t)$ with $t \in \mathbb{F}^\times \subset K^\times$. Let $\mathbb{B}$ be the subgroup of $G(K)$ that is the internal semi-direct product $\mathbb{B} := \mathcal{U}_a(K) \rtimes \mathcal{T}(C)$, i.e.~elements of the form $\mathrm{u}_{a}(x,y) \widetilde{a}(t)$ with $(x,y)\in H(L,K)$ and $t\in\bb F^\times \subset L^\times$.

\begin{Def}[Stabilizers of visual limits]
For $(u,v) \in H(L,K)$, define the group
\[\operatorname{Stab}(u,v)= g_{u,v}^{-1} \mathbb{B} g_{u,v} \cap G(A).\]
Denote by $U(u,v)$ the subgroup of its unipotent elements (it is a group because $\bb B$ is contained in $\cal B(K)$ and $\cal B$ is solvable).
\end{Def}

The fact that this subgroup is indeed the stabilizer of the visual limit of $\rf(u,v)$ will be proved in Proposition \ref{prop stab}. We start by studying in detail the subsets $U(u,v)$. In order to do so, note that $H(L,K)^0$ can be seen as an $A$-module via $b\cdot (0,v):=(0,bv)$. On the other hand, the projection onto the first coordinate $\pi_1 : H(L,K) \to L$ is a surjective group homomorphism with kernel $H(L,K)^0$. The quotient group $H(L,K)/H(L,K)^0$ is also naturally endowed with an $A$-module structure by $b \cdot (x, -) = (b x, -)$ via the induced isomorphism.

\begin{Lem}\label{Lemma M}
Let $(u,v) \in H(L,K)$.
\begin{enumerate}
    \item There is a unique proper and finitely generated $A$-submodule $H(u,v)^0$ of $H(L,K)^0\cong K$ such that 
\[ g_{u,v}^{-1} \mathcal{U}_{2a}(K) g_{u,v} \cap G(A) = \left\{ g_{u,v}^{-1} \mathrm{u}_a\big(x,y\big) g_{u,v},\ (x,y) \in H(u,v)^0\right\}.\]
More precisely, we have $H(u,v)^0 = H(L,K)^0 \cap \left(0 \times  \mathfrak{q}_{u,v}\right)$, which is a nontrivial $A$-module.
    \item There is a unique subgroup $H(u,v)$ of $H(L,K)$ such that 
\[ U(u,v) = g_{u,v}^{-1} \mathcal{U}_{a}(K) g_{u,v} \cap G(A) = \{ g_{u,v}^{-1} \mathrm{u}_a\big(x,y\big) g_{u,v},\ (x,y) \in H(u,v)\}.\]
Moreover, $H(u,v)$ contains $H(u,v)^0$ as a normal subgroup, is contained in $L \times B$, and
the quotient group $H(u,v) / H(u,v)^0$ canonically identifies, as projection onto the first coordinate, with a finitely generated (and therefore proper) $A$-submodule of $H(L,K)/H(L,K)^0\cong L$.

\item The subset $U(u,v)$ is a normal subgroup of $\stab(u,v)$ and the quotient group $\stab(u,v) / U(u,v)$ is isomorphic to a subgroup of $\mathbb{F}^\times$.
\end{enumerate}

\end{Lem}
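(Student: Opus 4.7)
The plan is to prove all three parts by an explicit computation of the conjugate matrix $g_{u,v}^{-1}\mathrm{u}_a(x,y)g_{u,v}$ and then reading off integrality conditions entry by entry. Using $\mathrm{u}_a(u,v)^{-1}=\mathrm{u}_a(-u,\bar v)$ (a consequence of the group law on $\mathcal U_a$ combined with the identity $u\bar u+v+\bar v=0$), one first computes
\[
g_{u,v}^{-1}=\begin{pmatrix}-\bar v & -\bar u & -1\\ u & -1 & 0\\ -1 & 0 & 0\end{pmatrix}.
\]
The conjugate then simplifies, using $u\bar u+v+\bar v=0$ on the diagonal, to a matrix whose $(3,1)$-entry is $y$, whose $(2,1)$-entry is $x-uy$, whose $(3,2)$-entry is $-\bar x-y\bar u$, and whose remaining off-diagonal entries involve monomials of the form $y\cdot m$ for $m$ in the set $\mathcal S:=\{1,u,v,\bar u,\bar v,u\bar u,uv,\bar u\bar v,v\bar v\}$ plus correction terms linear in $x$ or $\bar x$.

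For part (1), setting $x=0$ reduces all non-identity entries of the matrix to $y\cdot m$ with $m\in\mathcal S$. Matching with the definition $\q_{u,v}^{-1}=\p_{\bar u,v}\p_{u,\bar v}$, which is precisely the $B$-submodule of $L$ generated by $\mathcal S$, the condition that every entry lie in $B$ is equivalent to $y\q_{u,v}^{-1}\subset B$, i.e.\ $y\in\q_{u,v}$. Combined with $T(y)=0$, this yields $H(u,v)^0=H(L,K)^0\cap(0\times\q_{u,v})$, and uniqueness is immediate from the injectivity of $\mathrm{u}_{2a}$. For nontriviality, one observes that $\q_{u,v}$ is $\tau$-stable (since $\overline{\p_{\bar u,v}\p_{u,\bar v}}=\p_{u,\bar v}\p_{\bar u,v}=\q_{u,v}^{-1}$) and is not contained in $K$ (if it were, then $\q_{u,v}\cdot B\subset K$, contradicting $B\not\subset K$); so for any $\alpha\in\q_{u,v}\setminus K$ the element $\alpha-\bar\alpha\in\q_{u,v}$ is nonzero and has trace zero. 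Finite generation is immediate from $\q_{u,v}$ being a fractional $B$-ideal, while properness follows since $H(L,K)^0\cong K$ is not finitely generated over $A$.

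For part (2), $H(u,v)$ is a subgroup because conjugation by $g_{u,v}$ is a group isomorphism on $\mathcal U_a(K)$; its intersection with the subgroup $G(A)$ therefore pulls back through $\mathrm{u}_a$ to a subgroup of $H(L,K)$, and uniqueness is again immediate from the injectivity of $\mathrm{u}_a$. Normality of $H(u,v)^0$ follows from the centrality of $H(L,K)^0$ in $H(L,K)$. The $(3,1)$-entry being $y$ gives $H(u,v)\subset L\times B$. The induced map $H(u,v)/H(u,v)^0\to L$ via $\pi_1$ is injective by the very definition of $H(u,v)^0$, and its image is contained in $\bk_u$ because the condition $x-uy\in B$ from the $(2,1)$-entry, combined with $y\in B$, forces $x\in uB+B=\bk_u$. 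Since $\bk_u$ is a fractional $B$-ideal, hence finitely generated as an $A$-module, so is any $A$-submodule of it; properness is automatic since $\bk_u\subsetneq L$. The $A$-module stability of this image is checked by verifying that, for $(x,y)\in H(u,v)$ and $b\in A$, the pair $(bx,b^2y+y_1)$ lies in $H(u,v)$ for a suitable correction $y_1\in\q_{u,v}\cap\ker T$ compensating the extra terms produced by scaling.

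For part (3), the semidirect product decomposition $\bb B=\mathcal U_a(K)\rtimes\mathcal T(C)$ yields a projection $\pi:\bb B\to\mathcal T(C)\cong\bb F^\times$. Composing with the conjugation $g\mapsto g_{u,v}gg_{u,v}^{-1}$ defines a homomorphism $\varphi:\stab(u,v)\to\bb F^\times$ whose kernel is precisely $g_{u,v}^{-1}\mathcal U_a(K)g_{u,v}\cap G(A)=U(u,v)$. This simultaneously gives the normality of $U(u,v)$ in $\stab(u,v)$ and the embedding $\stab(u,v)/U(u,v)\hookrightarrow\bb F^\times$. The main obstacle in this plan is the matrix computation itself, which is lengthy but mechanical; the Hermitian identity $u\bar u+v+\bar v=0$ must be used repeatedly to simplify diagonal entries, and the verification of $A$-module stability in part (2) requires some careful bookkeeping of the compensating corrections in the $y$-coordinate.
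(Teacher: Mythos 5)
Your strategy is the paper's: conjugate explicitly, write $g_{u,v}^{-1}\mathrm{u}_a(x,y)g_{u,v}=\mathrm{Id}+\mathfrak{n}$ with $\mathfrak{n}=y\,\mathfrak{m}+(\text{terms linear in }x,\bar x)$, where the entries of $\mathfrak{m}$ generate $\q_{u,v}^{-1}$ over $B$, and read off integrality entry by entry. Your parts (1) and (3) --- the identification $H(u,v)^0=H(L,K)^0\cap(0\times\q_{u,v})$, nontriviality via a trace-zero element of $\q_{u,v}\smallsetminus K$, and the homomorphism $\stab(u,v)\to\bb B/\cal U_a(K)\cong\cal T(C)\cong\bb F^\times$ with kernel $U(u,v)$ --- coincide with the paper's proof and are correct, as are the containments $y\in B$ and $x\in\bk_u=B+uB$ in part (2).

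The gap is in the one substantive step of part (2), the $A$-module stability of $\pi_1(H(u,v))$, which you defer with an ansatz that cannot be completed. You propose $(bx,\,b^2y+y_1)\in H(u,v)$ with $y_1\in\q_{u,v}\cap\ker T$. With $Y=b^2y+y_1$ the hermitian condition $N(bx)+T(Y)=0$ does force $T(y_1)=0$, but the integrality condition reads $(Y-by)\,\mathfrak{m}\in\cal M_3(B)$, i.e.\ $(b^2-b)y+y_1\in\q_{u,v}$; combined with $y_1\in\q_{u,v}$ this would force $(b^2-b)y\in\q_{u,v}$, which is not available: one only knows $y\in B$, while $\q_{u,v}$ is in general a proper ideal of $B$ (for instance the relation $\bar x+y\bar u\in B$ shows $y\bar u\notin B$, hence $y\notin\q_{u,v}$, whenever $x\notin B$). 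The correct correction keeps the \emph{linear} scaling of $y$: take $Y=by+zN(x)$ with $T(z)=b-b^2$ and $z\in\frac{1}{N(x)}\q_{u,v}$; the choice $z=\frac12(b-b^2)\in A$ works because the identity $\mathfrak{n}^2=-N(x)\,\mathfrak{m}\in\cal M_3(B)$ gives $N(x)\in\q_{u,v}$ and hence $B\subseteq\frac{1}{N(x)}\q_{u,v}$. That identity, and the resulting membership $N(x)\in\q_{u,v}$, is the ingredient your outline is missing; without it the ``careful bookkeeping'' you defer does not close.
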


\begin{proof}
For any subset $X$ of $H(L,K)$, denote by $\mathrm{u}_{a}(X) = \lbrace \mathrm{u}_a(x,y): (x,y) \in X\rbrace$.
Note that the subgroup $\mathcal{U}_{2a}(K)$ of $\mathcal{U}_{a}(K)$ is equal to $\mathrm{u}_a\big( H(L,K)^0 \big)$.

The existence of subgroups $H(u,v)^0 \subset H(L,K)^0$ and $H(u,v) \subset H(L,K)$ is immediate since $\mathrm{u}_a$ realizes group isomorphisms $H(L,K) \to \mathcal{U}_{a}(K)$ and $H(L,K)^0 \to \mathcal{U}_{2a}(K)$.
Moreover, $H(u,v)^0$ is a normal subgroup of $H(u,v)$ as inverse image by the group isomorphism $\mathrm{u}_{a}$ of the normal subgroup $\mathcal{U}_{2a}(K) \cap g_{u,v} G(A) g_{u,v}^{-1}$ of $\mathcal{U}_{a}(K) \cap g_{u,v} G(A) g_{u,v}^{-1}$.

The equality $ U(u,v) = g_{u,v}^{-1} \mathcal{U}_{a}(K) g_{u,v} \cap G(A)$ is immediate by definition since $\mathcal{U}_a(K)$ are the unipotent elements of $\mathbb{B}$.
Moreover, since $\mathcal{U}_a(K)$ is a normal subgroup of $\mathbb{B}$, we deduce that $U(u,v)$ is a normal subgroup of $\stab(u,v)$.

Note that $g_{u,v}^{-1}\mathrm{u}_{a}(x,y) g_{u,v} \in U(u,v)$ if and only if $(x,y) \in H(L,K)$ and $\mathfrak{n}\in \mathcal{M}_3(B)$, where
\begin{equation}\label{eqn matrix n}
    \mathfrak{n}=\operatorname{Mat}\Big(g_{u,v}^{-1}\mathrm{u}_{a}(x,y)g_{u,v}\Big)- \text{Id} 
    = \textnormal{\footnotesize$\begin{pmatrix}
        \bar{v}y+\bar{u}x    &  -\bar{v}\bar{x}-\bar{u}\bar{v}y -N(u)x  &   -u\bar{v}\bar{x}+N(v)y+\bar{u}vx  \\
        -uy+x   &   u\bar{x}+N(u)y-\bar{u}x   &  u^2\bar{x}-vuy+vx    \\
        y   & -\bar{x}-y\bar{u} &   u\bar{x}+yv \\
    \end{pmatrix}$\normalsize}.
\end{equation}
Consider the matrix
\[\mathfrak{m} = \operatorname{Mat}(g_{u,v}^{-1}) \begin{pmatrix} 0 & 0& 1\\0&0&0\\0&0&0\end{pmatrix} \operatorname{Mat}(g_{u,v}) = \begin{pmatrix}
\bar v & -\bar u \bar v & N(v)\\
-u & N(u) & -uv\\
1 & -\bar u & v
\end{pmatrix}.\]
Note that $\mathfrak{n}^2 = - N(x) \mathfrak{m}.$

If $x = 0$, then $\mathfrak{n} = y \mathfrak{m}$.
Thus $\mathfrak{n} \in \mathcal{M}_3(B)$ if, and only if, $y\in \mathfrak{q}_{u,v}$.
This proves that $H(u,v)^0 = H(L,K) \cap \left( 0 \times \mathfrak{q}_{u,v}\right)$. In order to prove that $H(u,v)^{0}$ is non trivial, we only need to check that there exists a nonzero element $z \in \mathfrak{q}_{u,v}$ such that $T(z)=0$. Now, note that $\overline{\mathfrak{q}_{u,v}} = \mathfrak{q}_{\bar{u}, \bar{v}}= \mathfrak{q}_{u,v}$. Thus, the trace of any element in $\mathfrak{q}_{u,v}$ belongs to $\mathfrak{q}_{u,v}$. Note that $\mathfrak{q}_{u,v}$ is not contained in $K$, since it is a $B$-ideal. So, let $z_0 \in \mathfrak{q}_{u,v} \smallsetminus K$. Then $z= z_0-\frac{1}{2} \tr(z_0) \in \mathfrak{q}_{u,v}$ is a nontrivial element with trivial trace.\footnote{If we assumed $\operatorname{char}(\mathbb{F}) = 2$, since $\mathfrak{q}_{u,v}$ is a $B$-ideal stable by conjugation, we could take any $z_0 \in \mathfrak{q}_{u,v} \setminus \{0\}$ and consider $z= N(z_0) \in \mathfrak{q}_{u,v} \cap K \setminus \{0\}$ (instead of $z_0 + \frac{1}{2} T(z_0)$) which is of trace $T(z)=0$.} Recalling that, since $A$ is a Noetherian domain, an $A$-submodule of a finitely generated $A$-module is finitely generated, this proves statement~1.\\

Consider any element $(x,y) \in H(u,v) \setminus H(u,v)^0$ so that $-T(y) = N(x) \neq 0$.
Let $t \in A$ be any element.
We want to prove the existence of an element $y_t\in L$ such that $(tx,y_t) \in H(u,v)$.
Write $y_t = ty + z_t N(x)$ with $z_t \in L$.
Then
\begin{equation*}
    N(tx) + T(y_t) = t^2 N(x) +t T(y) + N(x) T(z_t) = N(x) \big( t^2 - t + T(z_t) \big)
\end{equation*}
Moreover
\begin{equation*}
    \operatorname{Mat}\big(g_{u,v}^{-1} \mathrm{u}_a(tx,y_t) g_{u,v}\big) = \operatorname{Id} + t \mathfrak{n} +z_t \mathfrak{n}^2
\end{equation*}
Thus $\operatorname{Mat}\big( g_{u,v}^{-1} \mathrm{u}_a(tx,y_t) g_{u,v} \big) \in \mathcal{M}_3(B) \Longleftrightarrow z_t \mathfrak{n}^2 = -z_t N(x) \mathfrak{m} \in \mathcal{M}_3(B)$
since $t \in A \subset B$ and $\mathfrak{n}\in \mathcal{M}_3(B)$ by assumption on $(x,y)$.
Hence 
\begin{equation}\label{equivalence H(u,v)}
    (tx,y_t) \in H(u,v) \Longleftrightarrow
    T(z_t) = t - t^2
    \text{ and }
    z_t \in \frac{1}{N(x)} \mathfrak{q}_{u,v}.
\end{equation}
Note that $B \subset \frac{1}{N(x)} \mathfrak{q}_{u,v}$ since $\mathfrak{n}^2 = - N(x) \mathfrak{m} \in \mathcal{M}_3(B)$ and $\mathfrak{q}_{u,v}$ is a fractional $B$-ideal.
If we take $z_t = \frac{1}{2} (t-t^2)$, then $z_t \in A \subset \frac{1}{N(x)} \mathfrak{q}_{u,v}$ satisfies $T(z_t) = t-t^2$.
In other words, $\big(tx,ty - \frac{t(t-1)}{2} N(x) \big) \in H(u,v)$.

This proves that $\pi_1(H(u,v))$ is an $A$-submodule of $L$. Note that this is a proper submodule since equation \eqref{eqn matrix n} tells us, by looking at the first column, that $y\in B$ and hence $x\in uB+B$. In particular, since $uB+B$ is finitely generated as an $A$-module, so is $\pi_1(H(u,v))$. Since $H(u,v) \cap H(L,K)^0 = H(u,v)^0$, the isomorphism of $A$-modules $H(L,K)/H(L,K)^0 \to L$ induced by $\pi_1$ provides an isomorphism $H(u,v) / H(u,v)^0 \to \pi_1(H(u,v))$.
This completes the proof of statement~2.\\

Finally, consider the composite $\varphi$ of the group homomorphism $\stab(u,v) \to \mathbb{B}$ given by $g \mapsto g_{u,v} g g_{u,v}^{-1}$ and the quotient group homomorphism $\mathbb{B} \to \mathbb{B} / \mathcal{U}_a(K) \cong \mathcal{T}(C)$.
Then $\ker \varphi = g_{u,v}^{-1} \mathcal{U}_a(K) g_{u,v} \cap \stab(u,v) = U(u,v)$.
Hence $\varphi$ induces an isomorphism between $\stab(u,v)/U(u,v)$ and a subgroup of $\mathcal{T}(C)$.
Since $\mathcal{T}(C)$ is isomorphic to $\mathbb{F}^\times$ via $\widetilde{a}$, we deduce statement~3.
\end{proof}

\begin{Rq}
In the rest of the article, we only need to know that $\pi_1(H(u,v))$ is an $\mathbb{F}$-vector space. Actually, if $\mathbb{F}$ contains at least $3$ elements, this is equivalent to $\pi_1(H(u,v))$ being an $A$-module. Indeed, there is $t_0 \in \mathbb{F}$ such that $t_0-t_0^2 \in \mathbb{F}^\times = A^\times$. Thus, the existence of $y_t$ for any $t \in A$ in the proof above becomes equivalent to the existence of $y_{t_0}$, taking $y_t= \frac{t-t^2}{t_0 - t_0^2} y_{t_0}$ by linearity of $T$ and the fact that $\frac{1}{N(x)} \mathfrak{q}_{u,v}$ is a fractional $B$-ideal. Moreover, in the particular case where $\bb F=\bb F_2$, it is simply obvious that $\pi_1(H(u,v))$ is an $\bb F$-vector space.

However, fields of characteristic 2 provide new issues to our argument. Indeed, if $\operatorname{char}(\mathbb{F})= 2$, we cannot define $z_t = \frac{t(1-t)}{2}$.
For $t \in \mathbb{F} \smallsetminus \{0,1\}$, up to dividing by $t-t^2$, the existence of a $z_t \in B$ satisfying conditions of~\eqref{equivalence H(u,v)} is equivalent to that of an element $z_1 \in B$ with $T(z_1)=1$. If such an element exists, we take $z_t = z_1 (t-t^2)$.
But in characteristic $2$, it may happen that such an element does not exist.
For instance, if $L = K(\alpha)$, where $\alpha^2+\alpha+\frac{1}{r}=0$, for some $r \in A$ with valuation $\omega(r) < 0$, one can easily show that there is no element in $B$ with trace equal to $1$. It could happen that such an element exists in $\frac{1}{N(x)} \mathfrak{q}_{u,v}$, but we do not know how much bigger than $B$ is this fractional ideal, which of course depends on $x$. This calls for further analysis, which is a reason why we have avoided fields of characteristic 2 in this article.
\end{Rq}

\begin{Def}[Stabilizers of vertices]
For $(u,v) \in H(L,K)$ and $n \in \Z_{>0}$, define the group
\[\operatorname{Stab}(u,v,n)= \operatorname{Stab}_{G(A)}\Big(\lambda_n(u,v)\Big).\]
Denote by $U(u,v,n)$ the subset of its unipotent elements and by $S(u,v,n)$ the subset of its semisimple elements.
\end{Def}

The following result gives all the information we need about these stabilizers.

\begin{Prop}\label{prop stab}
Let $(u,v) \in H(L,K)$ and $N_0=N_0(u,v):=\left\lceil\frac{2e_P}{d} \deg \widetilde{\mathfrak{q}_{u,v}}\right\rceil$.
Then:
\begin{enumerate}
\item We have $\stab(u,v,n)\subset\stab(u,v,n+1)$ for every $n>N_0$.
\item We have $\stab(u,v) = \bigcup_{n>N_0}\stab(u,v,n)$. In particular, $\stab(u,v)$ is the stabilizer in $G(A)$ of the visual limit of $\rf(u,v)$.
\item We have, for every $n>N_0$,
\[\operatorname{Stab}(u,v,n) = \Big\{ g_{u,v}^{-1} \mathrm{u}_{a}(x,y) \widetilde{a}(t) g_{u,v}\in \stab(u,v) : (x,y) \in H(L,K), \, \omega(y) \geqslant -\frac{n}{e_P}\Big\}.\]

\item There exists a non zero $B$-ideal $I=I(u,v)$ such that $I=\overline{I}$ and $H(L,K)_{I}:= H(L,K) \cap (I \times I)$ satisfies, for every $n>N_0$,
\[U(u,v,n) \supseteq \Big\{ g_{u,v}^{-1} \mathrm{u}_{a}(x,y) g_{u,v} : (x,y) \in H(L,K)_{I}, \, \omega(y) \geqslant -\frac{n}{e_P}\Big\}.\]
\end{enumerate}
\end{Prop}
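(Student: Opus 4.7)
The plan is to transport the computation back to the standard apartment via conjugation by $g_{u,v}$ and to exploit the Bruhat--Tits description of $\stab_G(\lambda_n(\infty))$, using Lemma \ref{lem Idealic} to kill the lower-triangular contribution under the $G(A)$-integrality constraint. All four statements will follow from a careful analysis of statement~3, which is the core computation. First I would recall from \cite[4.2]{BruhatTits2} (see also \cite[\S9]{Landvogt}) that the stabilizer of $\lambda_n(\infty) = \frac{n}{4e_P} a^\vee$ in $G(K_P)$ admits the Iwahori-type factorization $U_{-a,\lambda_n(\infty)} \cdot \mathcal{T}(\mathcal{O}_P) \cdot U_{a,\lambda_n(\infty)}$, where $U_{a,\lambda_n(\infty)}$ consists of $\mathrm{u}_a(x,y)$ with $\omega(y) \geq -n/e_P$ (the governing condition coming from $2a(\lambda_n(\infty)) = n/e_P$) and symmetrically $U_{-a,\lambda_n(\infty)}$ consists of $\mathrm{u}_{-a}(x',y')$ with $\omega(y') \geq n/e_P$ and $\omega(x') \geq n/(2e_P)$. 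Since $\stab(u,v,n) = g_{u,v}^{-1} \stab_G(\lambda_n(\infty)) g_{u,v} \cap G(A)$, any $g \in \stab(u,v,n)$ can be written uniquely as $g = g_{u,v}^{-1} \mathrm{u}_{-a}(x',y') \widetilde{a}(\tau) \mathrm{u}_a(x,y) g_{u,v}$ with parameters satisfying these valuation bounds, and $g \in G(K)$ forces $(x',y'),(x,y) \in H(L,K)$ and $\tau \in L^\times$.

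For statement~3 itself, I then impose $g \in G(A)$. A matrix computation analogous to \eqref{eqn matrix n}, applied to the $U_{-a}$-factor, shows that the integrality of $g$ translates to $x' \in \widetilde{\q_{u,v}}^{-1}$ and $y' \in \q_{u,v}^{-1}$ (up to conjugation-symmetric variants, which are absorbed by the identity $\overline{\q_{u,v}} = \q_{u,v}$). Combined with the valuation bounds $\omega(x') \geq n/(2e_P)$ and $\omega(y') \geq n/e_P$, Lemma \ref{lem Idealic} applied with $n > N_0$ forces $x' = y' = 0$, so the $U_{-a}$-factor is trivial and $g_{u,v} g g_{u,v}^{-1} = \widetilde{a}(\tau) \mathrm{u}_a(x,y) \in \mathcal{B}(K)$. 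To conclude $\tau \in \bb F^\times$, I would inspect the $(1,1)$- and $(3,3)$-entries of $g = g_{u,v}^{-1} \widetilde{a}(\tau) \mathrm{u}_a(x,y) g_{u,v}$ via the explicit inverse $g_{u,v}^{-1}$: they yield $\tau \in B$ and $1/\bar{\tau} \in B$, which together with $\omega(\tau) = 0$ (from $\widetilde{a}(\tau) \in \mathcal{T}(\mathcal{O}_P)$) gives $\tau \in B^\times \cap \mathcal{O}_Q^\times = H^0(D, \mathcal{O}_D^\times) = \bb F^\times$, using that $\bb F$ is algebraically closed in $L$ and $D$ is projective. This establishes statement~3.

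Statement~1 is then immediate since the inequality $\omega(y) \geq -n/e_P$ is preserved on replacing $n$ by $n+1$. For statement~2, the increasing union $\bigcup_{n > N_0} \stab(u,v,n)$ drops the valuation bound on $y$ entirely and equals $g_{u,v}^{-1} \bb B g_{u,v} \cap G(A) = \stab(u,v)$ by definition. Its identification with the stabilizer of $\partial_\infty \rf(u,v)$ in $G(A)$ follows because $\bb B \subset \mathcal{B}$ certainly fixes $\partial_\infty \rf(\infty)$, and conversely any $g \in G(A)$ fixing $\partial_\infty \rf(u,v)$ conjugates into $\mathcal{B}(K) \cap g_{u,v} G(A) g_{u,v}^{-1}$, whose torus parameter is again forced into $\bb F^\times$ by the same argument; such an element acts trivially on the apartment and thus fixes some (and hence all sufficiently large) $\lambda_n(u,v)$.

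For statement~4, I would construct an explicit nonzero $B$-ideal $I = I(u,v)$ with $I = \bar I$ clearing all denominators in $g_{u,v}^{-1} \mathrm{u}_a(x,y) g_{u,v}$ when $x,y \in I$. Inspection of \eqref{eqn matrix n} reveals that every entry is an $\bb F$-linear combination of $x, \bar x, y$ with coefficients in a fractional $B$-ideal contained in $(\q_{u,v} \cap \widetilde{\q_{u,v}} \cap \overline{\widetilde{\q_{u,v}}})^{-1}$, so the choice $I := \q_{u,v} \cap \widetilde{\q_{u,v}} \cap \overline{\widetilde{\q_{u,v}}}$ (nonzero by construction and visibly conjugation-invariant) makes every entry lie in $B$. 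The valuation condition $\omega(y) \geq -n/e_P$ then places the element in $U(u,v,n)$ by statement~3. The main obstacle throughout is the matrix bookkeeping in the second paragraph: identifying precisely which fractional ideals control the integrality of $(x', y')$ in the Iwahori factorization, and matching them cleanly to $\q_{u,v}$ and $\widetilde{\q_{u,v}}$ from Lemma \ref{lem Idealic}. Once that matching is set up, the rest of the argument is forced.
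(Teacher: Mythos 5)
Your overall strategy is the same as the paper's: conjugate back to the standard apartment by $g_{u,v}$, use the Bruhat--Tits description of $\operatorname{Stab}_{G(K_P)}(\lambda_n(\infty))$, and invoke Lemma \ref{lem Idealic} to force the lower-triangular data to vanish under $G(A)$-integrality. However, there is one genuine gap at the core of your argument for statement~3: the claimed factorization $\operatorname{Stab}_{G(K_P)}(\lambda_n(\infty)) = U_{-a,\lambda_n(\infty)}\cdot\mathcal{T}(\mathcal{O}_P)\cdot U_{a,\lambda_n(\infty)}$ is false. The correct statement (Landvogt 9.3(i), 8.10(ii), as used in the paper) is $\operatorname{Stab}_{G(K_P)}(\lambda_n(\infty)) = U_{-a,\lambda_n(\infty)}\, U_{a,\lambda_n(\infty)}\, N_{\lambda_n(\infty)}$, where $N_{\lambda_n(\infty)}$ is the full stabilizer of the point in $\mathcal{N}_G(\mathcal{S})(K_P)$; since $\lambda_n(\infty)$ is fixed by the affine reflection through itself, $N_{\lambda_n(\infty)}$ contains not only the bounded torus but also the coset $\widetilde{a}(t)\,\mathrm{s}\cdot\mathcal{T}(K_P)_b$ with $\omega(t)=-n/e_P$. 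Your product set is only the ``big cell'' of the vertex stabilizer and misses these reflection-type elements (already for $\SL_2(\mathbb{Z}_p)$ the matrix $\left(\begin{smallmatrix}0&-1\\1&0\end{smallmatrix}\right)$ stabilizes the standard vertex but does not lie in $U^-TU^+$ with integral parameters). Consequently your case analysis does not rule out elements of $\stab(u,v,n)$ lying outside the big cell, and the conclusion that every element of $\stab(u,v,n)$ conjugates into $\mathcal{B}(K)$ is not established. The paper closes exactly this hole by a second, parallel matrix computation for the case $\mathrm{n}=\widetilde{a}(t)\mathrm{s}$, where the same ideal-theoretic constraints plus Lemma \ref{lem Idealic} again force upper-triangularity; your proof needs that second case.

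The rest of your proposal is sound and in places cleaner than the paper. In particular, your ordering $U_{-a}\cdot T\cdot U_a$ of the big cell makes the lower-triangular entries literally unit multiples of $x'$, $\bar x'$, $y'$, which streamlines the application of Lemma \ref{lem Idealic}; your identification of the torus parameter as a global unit, $\tau\in B^\times\cap\mathcal{O}_Q^\times=\bb F^\times$, is a valid alternative to the paper's characteristic-polynomial argument (though you should argue via integrality of the eigenvalues rather than reading off the $(1,1)$- and $(3,3)$-entries of the conjugated matrix, which are not simply $\tau$ and $1/\bar\tau$); and your explicit choice $I=\q_{u,v}\cap\widetilde{\q_{u,v}}\cap\overline{\widetilde{\q_{u,v}}}$ for statement~4 does clear all the coefficients appearing in \eqref{eqn matrix n} (including $u^2$ and $\bar u^2$, which lie in $\widetilde{\q_{u,v}}^{-1}$ and its conjugate but not in $\q_{u,v}^{-1}$) and is more transparent than the paper's denominator-clearing construction. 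Once the missing Weyl-coset case is added to the stabilizer decomposition, your deductions of statements~1, 2 and 4 from statement~3 go through.
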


\begin{proof}
In this proof, given fractional ideals $\mathfrak{b}_{i,j}$ of $B$, we denote for simplicity:
\[
    \begin{pmatrix}
        \mathfrak{b}_{1,1} & \mathfrak{b}_{1,2} & \mathfrak{b}_{1,3} \\
        \mathfrak{b}_{2,1} & \mathfrak{b}_{2,2} & \mathfrak{b}_{2,3} \\
        \mathfrak{b}_{3,1} & \mathfrak{b}_{3,2} & \mathfrak{b}_{3,3} \\
    \end{pmatrix}
    = \left\{
        \begin{pmatrix}
        {b}_{1,1} & {b}_{1,2} & {b}_{1,3} \\
        {b}_{2,1} & {b}_{2,2} & {b}_{2,3} \\
        {b}_{3,1} & {b}_{3,2} & {b}_{3,3} \\
    \end{pmatrix} \in \mathcal{M}_3(L),\
        {b}_{i,j} \in \mathfrak{b}_{i,j}\ \forall i,j
    \right\}.
\]

Clearly, we have:
\[
	\operatorname{Stab}(u,v,n)
	= G(A) \cap g_{u,v}^{-1} \operatorname{Stab}_{G(K_P)}(\lambda_n(\infty)) g_{u,v}
\]

Let $g \in G(A) \subset \mathcal{M}_3(B) = 
    \begin{pmatrix}
        B & B & B \\
        B & B & B \\
        B & B & B \\
    \end{pmatrix}$. Then
\begin{align*}
    g_{u,v} g g_{u,v}^{-1}
    & = \mathrm{u}_{-a}(u,v) \left( \mathrm{s} g \mathrm{s} \right) \mathrm{u}_{-a}(-u,\bar{v}) \\
    & \in 
        \begin{pmatrix}
            1   &       0       &   0   \\
            u   &       1       &   0   \\
            v   &   -\bar{u}    &   1   \\
        \end{pmatrix}
        \begin{pmatrix}
            B & B & B \\
            B & B & B \\
            B & B & B \\
        \end{pmatrix}
        \begin{pmatrix}
                1   &       0      &   0   \\
               -u   &       1      &   0   \\
            \bar{v} &   \bar{u}    &   1   \\
        \end{pmatrix}\\
    & \subset
        \begin{pmatrix}
            1   &       0       &   0   \\
            u   &       1       &   0   \\
            v   &   -\bar{u}    &   1   \\
        \end{pmatrix}
        \begin{pmatrix}
            \mathfrak{p}_{u,\bar{v}} & \mathfrak{b}_{\bar{u}} & B \\
            \mathfrak{p}_{u,\bar{v}} & \mathfrak{b}_{\bar{u}} & B \\
            \mathfrak{p}_{u,\bar{v}} & \mathfrak{b}_{\bar{u}} & B \\
        \end{pmatrix}\\
    & \subset
        \begin{pmatrix}
            L & L & L \\
            \mathfrak{p}_{u,\bar{v}}\mathfrak{b}_u & L & L \\
            \mathfrak{p}_{u,\bar{v}}\mathfrak{p}_{\bar{u},v} & \mathfrak{b}_{\bar{u}}\mathfrak{p}_{\bar{u},v} & L \\
        \end{pmatrix}
        =
        \begin{pmatrix}
            L & L & L \\
            \widetilde{\mathfrak{q}_{u,v}}^{-1} & L & L \\
            {\mathfrak{q}_{u,v}}^{-1} & \widetilde{\mathfrak{q}_{\bar{u},\bar{v}}}^{-1} & L \\
        \end{pmatrix}
        \\
\end{align*}

By \cite[9.3(i) and 8.10 (ii)]{Landvogt}, we know that:
\begin{equation}\label{eqn P_x}
	\operatorname{Stab}_{G(K_P)}(\lambda_n(\infty))
	= U_{-a,\lambda_n(\infty)} U_{a,\lambda_n(\infty)} N_{\lambda_n(\infty)}
\end{equation}
where
\begin{align}\label{eqn def U_a_lambda}
\begin{split}
	U_{-a,\lambda_n(\infty)}
	&= \left\{
		\mathrm{u}_{-a}(u,v),\
		\frac{1}{2} \omega(v) \geqslant -(-a)(\lambda_n(\infty)) = \frac{n}{2e_P}
	\right\},\\
	U_{a,\lambda_n(\infty)}
	&= \left\{
		\mathrm{u}_{a}(u,v),\
		\frac{1}{2} \omega(v) \geqslant -a(\lambda_n(\infty)) = -\frac{n}{2e_P}
	\right\}
\end{split}
\end{align}
and
\[
	N_{\lambda_n(\infty)}
	= \operatorname{Stab}_{\mathcal{N}_G(\mathcal{S})(K_P)}(\lambda_n(\infty))
	= \{1, \widetilde{a}(t) \mathrm{s}\} \cdot \mathcal{T}(K_P)_b
\]
with $\frac{1}{2}\omega(t) = -a(\lambda_n(\infty)) = -\frac{n}{2e_P}$ by \cite[8.6(ii), 4.21(iii) and 4.14(i)]{Landvogt}.

Now assume that $g \in \operatorname{Stab}(u,v,n)$ so that $h=g_{u,v} g g_{u,v}^{-1} \in \operatorname{Stab}_{G(K_P)}(\lambda_n(\infty))$ and write it as:
$h = \mathrm{u}_{-a}(U,V) \mathrm{u}_a(X,Y) \mathrm{n}$ for some $(U,V),(X,Y) \in H(L_P,K_P)$ and $\mathrm{n} \in \{1, \widetilde{a}(t) \mathrm{s}\} \cdot \mathcal{T}(K_P)_b$ such that:
\[
    \left\{\begin{array}{l}
     2\omega(X) \geqslant \omega(Y) \geqslant -\frac{n}{e_P}  \\
     2\omega(U) \geqslant \omega(V) \geqslant \frac{n}{e_P}
    \end{array}\right.
\]

If $\mathrm{n} \in \mathcal{T}(K_P)_b$, then write it as $\mathrm{n} = \widetilde{a}(t)$ with $t \in \mathcal{O}_P$, then we have:
\begin{align*}
    \operatorname{Mat}(h)
    &= \begin{pmatrix}
        1   &   0    &   0   \\
        U   &   1    &   0   \\
        V   &-\bar{U}&   1   \\
    \end{pmatrix}
    \begin{pmatrix}
        1   &   -\bar{X}    &   Y   \\
        0   &       1       &   X   \\
        0   &       0       &   1   \\
    \end{pmatrix}
    \begin{pmatrix}
        t   &   0   &   0\\
        0   & \bar{t}/t & 0\\
        0   &   0   &   1/\bar{t}\\
    \end{pmatrix}\\
    &=
    \begin{pmatrix}
        t   &       -(\bar{t}/t)\bar{X}        &   (1/\bar{t})Y              \\
        tU  &   (\bar{t}/t)(1-\bar{X}U)         &   (1/\bar{t})(X+UY)         \\
        tV  &   -(\bar{t}/t)(\bar{U}+V\bar{X}) &   (1/\bar{t})(1-\bar{U}X+VY)\\
    \end{pmatrix}\\
    &\in
        \begin{pmatrix}
            L & L & L \\
            \widetilde{\mathfrak{q}_{u,v}}^{-1} & L & L \\
            {\mathfrak{q}_{u,v}}^{-1} & \widetilde{\mathfrak{q}_{\bar{u},\bar{v}}}^{-1} & L \\
        \end{pmatrix}
\end{align*}
But since
\begin{align*}
    \omega(tU) &\geqslant \frac{n}{2e_P},\\
    \omega(tV) &\geqslant \frac{n}{e_P},\\
    \omega(-(\bar{t}/t)(\bar{U}+V\bar{X})) &\geqslant \min(\omega(U),\omega(V)+\omega(X)) \geqslant \frac{n}{2e_P},\\
\end{align*}
we deduce from Lemma \ref{lem Idealic} that $h \in \begin{pmatrix} L & L & L\\0&L&L\\0&0&L\end{pmatrix}$.

Otherwise $\mathrm{n} = \widetilde{a}(t) \mathrm{s}$ for some $t \in L_P$ such that $\omega(t) \geqslant \frac{-n}{e_P}$. Thus, we have:
\begin{align*}
    \operatorname{Mat}(h)
    &= \begin{pmatrix}
        1   &   0    &   0   \\
        U   &   1    &   0   \\
        V   &-\bar{U}&   1   \\
    \end{pmatrix}
    \begin{pmatrix}
        1   &   -\bar{X}    &   Y   \\
        0   &       1       &   X   \\
        0   &       0       &   1   \\
    \end{pmatrix}
    \begin{pmatrix}
        t   &   0   &   0\\
        0   & \bar{t}/t & 0\\
        0   &   0   &   1/\bar{t}\\
    \end{pmatrix}
    \begin{pmatrix}
        0   &   0   &   -1\\
        0   &  -1   &   0 \\
        -1  &   0   &   0 \\
    \end{pmatrix}\\
    &=
    \begin{pmatrix}
        -(1/\bar{t})Y               &   (\bar{t}/t)\bar{X}              & -t \\
        -(1/\bar{t})(X+UY)          &   -(\bar{t}/t)(1-\bar{X}U         & -tU \\
        -(1/\bar{t})(1-\bar{U}X+VY) &   (\bar{t}/t)(\bar{U}+V\bar{X})   & -tV \\
    \end{pmatrix}\\
    &\in
        \begin{pmatrix}
            L & L & L \\
            \widetilde{\mathfrak{q}_{u,v}}^{-1} & L & L \\
            {\mathfrak{q}_{u,v}}^{-1} & \widetilde{\mathfrak{q}_{\bar{u},\bar{v}}}^{-1} & L \\
        \end{pmatrix}
\end{align*}
But since
\begin{align*}
    \omega(-(1/\bar{t})(X+UY)) &\geqslant \frac{n}{e_P}+ \min(\omega(X),\omega(U)+\omega(Y) \geqslant \frac{n}{2e_P},\\
    \omega(-(1/\bar{t})(1-\bar{U}X+VY)) &\geqslant \frac{n}{e_P} + \min(0,\omega(U)+\omega(X),\omega(V)+\omega(Y)) \geqslant \frac{n}{e_P},\\
    \omega((\bar{t}/t)(\bar{U}+V\bar{X})) &\geqslant \min(\omega(U),\omega(V)+\omega(X)) \geqslant \frac{n}{2e_P},\\
\end{align*}
we deduce from Lemma \ref{lem Idealic} that $h \in \begin{pmatrix} L & L & L\\0&L&L\\0&0&L\end{pmatrix}$.

Hence, in both cases, we have $h \in \mathcal{B}(K) \cap \operatorname{Stab}_{G(K_P)}(\lambda_n(\infty))$
and one can write $h = \mathrm{u}_a(x,y)\widetilde{a}(t) $ for some $t \in L^\times$ and $(x,y) \in H(L,K)$ with
\[
    \left\{\begin{array}{l}
     \omega(y) \geqslant - \frac{n}{e_P}  \\
     \omega(t) = 0
    \end{array}\right.
\]
The characteristic polynomial of $g$ has coefficients in $B$ with non-negative valuation in $P$ since the valuation of its eigenvalues $(t, \bar{t}/t, 1/\bar{t})$ is equal to $0$. Hence, the characteristic polynomial of $g$ has coefficients in $\mathbb{F}$. Furthermore, as $\mathbb{F}$ is algebraically closed in $L$, then $t \in \mathbb{F}^\times$. We see then that
\[\operatorname{Stab}(u,v,n) \subseteq \Big\{ g_{u,v}^{-1} \mathrm{u}_{a}(x,y) \widetilde{a}(t) g_{u,v},\, \mathrm{u}_{a}(x,y) \widetilde{a}(t) \in \bb B, \, \omega(y) \geqslant -\frac{n}{e_P} \text{ and } \omega(x)  \geqslant -\frac{n}{2e_P} \Big\}.\]
Now we can prove 1. Let $h \in g_{u,v} G(A) g_{u,v}^{-1} \cap \text{Stab}_{G(K_P)}(\lambda_n(\infty))$. By the previous discussion, $h$ has the form $h=\mathrm{u}_a(x,y)\widetilde{a}(t)$, for some $t \in \mathbb{F}^{*}$ and $(x,y) \in H(L,K)$ with $2\omega(x) \geqslant \omega(y) \geqslant -\frac{n}{e_P}$. This implies that $h \in U_{a, \lambda_n(\infty)} \mathcal{T}(C) \subseteq U_{-a, \lambda_{n+1}(\infty)} U_{a, \lambda_{n+1}(\infty)} N_{\lambda_{n+1}(\infty)}$ by \eqref{eqn P_x}. We conclude that $h \in g_{u,v} G(A) g_{u,v}^{-1} \cap \text{Stab}_{G(K_P)}(\lambda_{n+1}(\infty))$. This proves 1.\\

We have seen that $\stab(u,v,n)\subseteq g_{u,v}^{-1}\bb Bg_{u,v}$ for any $n > N_0$.
In particular, we deduce that $\bigcup_{n > N_0} \stab(u,v,n) \subseteq g_{u,v}^{-1}\bb Bg_{u,v} \cap G(A) = \stab(u,v)$.

Conversely, let $b \in \stab(u,v)$ and write it $b = g_{u,v}^{-1} \mathrm{u}_a(x,y) \widetilde{a}(t) g_{u,v} \in G(A)$.
Then $g_{u,v} b g_{u,v}^{-1} = \mathrm{u}_a(x,y) \widetilde{a}(t)$.
We have that $\widetilde{a}(t) \in \mathcal{T}(C) \subset \mathcal{T}(K_P)_b \subset \stab_{G(K_P)}(\lambda_n(\infty))$ for any $n \in \Z_{>0}$ since $\mathcal{T}(K_P)_b$ fixes the standard apartment.
Moreover, for $n_1 > N_0$ large enough, we have that $\omega(y) > - \frac{n_1}{e_P}$.
Hence $\mathrm{u}_a(x,y) \in \stab_{G(K_P)}(\lambda_{n_1}(\infty))$.
Thus, $\mathrm{u}_a(x,y) \widetilde{a}(t) \in \stab_{G(K_P)}(\lambda_{n_1}(\infty))$ and, since $b \in G(A)$, we deduce that $b \in \stab(u,v,n_1)$.
Hence we have $\stab(u,v) \subseteq \bigcup_{n > N_0} \stab(u,v,n)$, and therefore the equality of 2. This equality clearly implies that $\stab(u,v)$ stabilizes the visual limit of $\rf(u,v)$. On the other hand, if $g\in G(A)$ stabilizes the visual limit of $\rf(u,v)$, then the element $g_{u,v}gg_{u,v}^{-1}$ stabilizes the visual limit of $\rf(\infty)$, hence it is contained in $\cal B(K)$. We can write $g_{u,v}gg_{u,v}^{-1}=\mathrm{u}_a(x,y)\widetilde{a}(t)$, for some $t \in L^\times$ and $(x,y) \in H(L,K)$. Then, the eigenvalues of $g$ are $t$, $\bar{t}/t$ and $1/\bar{t}$, which belong to $L$. On the other hand, the characteristic polynomial of $g \in \mathcal{M}_3(B)$ belongs to $B[x]$, and therefore $t$, $\bar{t}/t$ and $1/\bar{t}$ are integral over $B$. Since $B$ is integrally closed, we get $t,\bar{t}/t, 1/\bar{t} \in B$. In particular, we deduce that $\omega(t), \omega(1/t)=\omega(1/\bar{t}) \leq 0$. This implies that $\omega(t)=0$, whence $t \in \mathcal{O}_Q \cap B=\mathbb{F}$. We conclude that $g_{u,v}gg_{u,v}^{-1} \in \mathbb{B}$, so that $g \in \mathrm{Stab}(u,v)$. This proves 2.\\

We claim now that all elements in $\stab(u,v,n+1)\smallsetminus\stab(u,v,n)$ are of the form $g_{u,v}^{-1} \mathrm{u}_{a}(x,y) \widetilde{a}(t) g_{u,v}$ with $\omega(y)=-\frac{n+1}{e_P}$. Indeed, 
for any $g\in \operatorname{Stab}(u,v,n)$ we have $h=g_{u,v} g g_{u,v}^{-1} \in \text{Stab}_{G(K_P)}(\lambda_n(\infty))$. So the uniqueness in the decomposition of any element of 
$\text{Stab}_{G(K_P)}(\lambda_n(\infty))$ as a product in $ U_{-a,\lambda_n(\infty)} U_{a,\lambda_n(\infty)} N_{\lambda_n(\infty)}$,which follows from Bruhat decomposition, shows the claim. This implies 3.\\

Finally, let us prove 4. Consider the matrix $\mathfrak{n}$ given in \eqref{eqn matrix n}, in the proof of Lemma~\ref{Lemma M}.
Observe that all entries of $\mathfrak{n}$ are linear polynomials in the variables $x$, $\bar{x}$ and $y$.
Moreover, if we let $p_{ij}(x,y)=a_{ij} x+ b_{ij} y + c_{ij} \bar{x}$ be the polynomial corresponding to the entry $(i,j)$ of $\mathfrak{n}$, then for any $i,j \in \lbrace 1,2,3\rbrace$ we can write $a_{ij}=\frac{a'_{ij}}{a''_{ij}}$, $b_{ij}=\frac{b'_{ij}}{b''_{ij}}$ and $c_{ij}=\frac{c'_{ij}}{c''_{ij}}$, where $a'_{ij}, a''_{ij},b'_{ij}, b''_{ij},c'_{ij}, c''_{ij} \in B$. Let $K_{ij}$ be the principal $B$-ideal generated by $a''_{ij}b''_{ij}c''_{ij}$.
Then $\mathfrak{n} \in \mathcal{M}_3(B)$ if and only if $a'_{ij} x+ b'_{ij} y + c'_{ij} \bar{x} \in K_{ij}$ for each $i,j \in \lbrace 1,2,3\rbrace$. So, the previous condition holds for $x,y \in \bigcap_{i,j=1}^3 (K_{ij} \cap \overline{K_{ij}})$. In particular we can take $I=\bigcap_{i,j=1}^3 (K_{ij} \cap \overline{K_{ij}}) \subset B$, which clearly satisfies $I=\overline{I}$. This proves 4.
\end{proof}

\section{Riemann-Roch and cusps in \texorpdfstring{$\overline X$}{X}}\label{sec RR}

In this section, we apply the Riemann-Roch Theorem in order to compare stabilizers of the different $\lambda_n(u,v)$. Let us recall first this statement and apply it to our content.

\paragraph*{Riemann-Roch Theorem.}
Let $J \subset L$ be a proper non-zero fractional $B$-ideal, which we see as a line bundle associated to a divisor $D_J$ in the affine curve $\spec(B)=D\smallsetminus\{Q\}$, and let $m \in \Z_{>0}$. Note that $\deg(D_J)=-\deg(J)$. Recalling that $\Gamma= \omega(L_P^{\times})= \frac{1}{e_P} \mathbb{Z}$, define
\[J[m]:=\cal L(D_J+mQ)=\left\lbrace x \in J:\omega(x) \geq -\frac{m}{e_P}\right\rbrace.\]
We denote by $g_D$ the genus of $D$. Then by the Riemann-Roch Theorem \cite[\S 1, Thm.~1.5.17]{Stichlenoth} the set $J[m]$ is a finite-dimensional vector space over $\mathbb{F}$, and when $\deg(D_J+mQ)\geq 2g_D-1$ we have
\[\text{dim}_{\mathbb{F}}(J[m])= \deg(D_J+mQ)+ 1-g_D.\]
Let $f_P=[\kappa_Q:\kappa_P]$ be the residual degree of $L/K$ at $P$. Hence
\[\deg(D_J+mQ)=-\deg(J)+m\deg(Q)=-\deg(J)+mf_P\deg(P)=-\deg(J)+mf_Pd,\]
so that we finally get,
\begin{equation}\label{eq rr0}
\text{dim}_{\mathbb{F}}(J[m])= -\deg(J)+mf_Pd+ 1-g_D,\quad\text{ when }\quad mf_Pd\geq \deg(J)+2g_D-1.
\end{equation}

This result will help us to define the ``legs of the spider'', that is, the cusps in the quotient graph $\overline X$. In order to do this, we use the rays $\rf(u,v)$, but starting ``away enough''. The first result of this section fixes such a bound. But first we need some definitions.

\begin{Def}
We denote by $M(u,v):=\pi_1(g_{u,v} U(u,v)g_{u,v}^{-1})$ the $A$-submodule of $L$ defined in Lemma \ref{Lemma M}.2. Note that it contains the $B$-ideal $I(u,v)$ by Proposition \ref{prop stab}.4. Similarly, we define
\[M(u,v,n):=\pi_1(g_{u,v} U(u,v,n)g_{u,v}^{-1}).\]
Finally, define $r(u,v)$ as the $\bb F$-dimension of the quotient $M(u,v)/I(u,v)$.
\end{Def}

Note that $M(u,v,n)$ contains $I(u,v)[\frac n2]$. Indeed, for each $x \in I(u,v)[\frac n2]$ we have $(x,y)\in H(L,K)_I$ with $y:=-N(x)/2$ and $\omega(y)=\omega(N(x)) \geq -\frac{n}{e_P}$. So, it follows from Proposition \ref{prop stab}.4 that $g_{u,v}^{-1} \mathrm{u}_a(x,y) g_{u,v} \in U(u,v,n)$, whence $x \in \pi_1(g_{u,v} U(u,v,n)g_{u,v}^{-1})$.

Note also that $M(u,v,n)$ is an $\bb F$-vector space. Indeed, let $x\in M(u,v,n)$ and let $y \in L$ be such that $g_{u,v}^{-1} \mathrm{u}_a(x,y) g_{u,v} \in U(u,v,n)$. For any $\lambda \in \mathbb{F}$, the pair $(\lambda x, \lambda^2 y)$ belongs to $H(L,K)$ and satisfies $\omega(\lambda^2 y)=\omega(y) \geq -\frac{n}{e_P}$. Thus, we get $g_{u,v}^{-1} \mathrm{u}_a(\lambda x, \lambda^2 y) g_{u,v} \in U(u,v,n)$ and $\lambda x\in M(u,v,n)$.

\begin{Lem}\label{lemma ideals}
The dimension $r(u,v)$ is finite. Moreover, there exists $N_1=N_1(u,v)\in\Z_{>0}$ such that, for $n\geq N_1$, $M(u,v,n)/I(u,v)[\frac n2]$ has dimension $r(u,v)$.
\end{Lem}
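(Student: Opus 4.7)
My plan is to prove the two assertions separately, in each case making direct use of the descriptions established in Lemma \ref{Lemma M} and Proposition \ref{prop stab}.

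For the finiteness of $r(u,v)$, I would argue as follows. By Lemma \ref{Lemma M}.2, $M(u,v)$ is a finitely generated $A$-submodule of $L$, and by Proposition \ref{prop stab}.4 it contains the nonzero $B$-ideal $I(u,v)$. Since $I(u,v)$ is a nonzero $B$-ideal, it has $K$-rank $2$ in $L$, and therefore so does $M(u,v)$. The quotient $M(u,v)/I(u,v)$ is then a finitely generated torsion $A$-module, hence (by the structure theorem over the Dedekind domain $A$) a finite direct sum of cyclic modules $A/\p^k$. Each such cyclic factor is a finite-dimensional $\bb F$-vector space because the residue fields of $A$ at closed points of $C\smallsetminus\{P\}$ are finite extensions of $\bb F$ (the curve $C$ being geometrically integral over $\bb F$).

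For the second statement, the key step is the identification
\[M(u,v,n)\cap I(u,v)=I(u,v)[n/2],\]
which yields an injection of $\bb F$-vector spaces $M(u,v,n)/I(u,v)[n/2]\hookrightarrow M(u,v)/I(u,v)$. The inclusion $\supseteq$ was established just before the statement. For $\subseteq$, given $x\in M(u,v,n)$, Proposition \ref{prop stab}.3 provides a $y\in L$ with $(x,y)\in H(L,K)$ and $\omega(y)\geq -n/e_P$; the defining relation $N(x)+T(y)=0$ then yields $2\omega(x)=\omega(N(x))\geq \omega(y)\geq -n/e_P$, so $\omega(x)\geq -n/(2e_P)$, and hence $x\in I(u,v)[n/2]$ as soon as $x\in I(u,v)$.

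Once the injection is in place, I would finish by a standard exhaustion argument. Fix lifts $x_1,\dots,x_{r(u,v)}\in M(u,v)$ of an $\bb F$-basis of $M(u,v)/I(u,v)$. Since $\stab(u,v)=\bigcup_{n>N_0}\stab(u,v,n)$ by Proposition \ref{prop stab}.2, restricting to unipotent elements and applying the parametrization $\pi_1\circ g_{u,v}(\cdot)g_{u,v}^{-1}$ yields $M(u,v)=\bigcup_{n>N_0}M(u,v,n)$. There is therefore some $N_1$ with $x_1,\dots,x_{r(u,v)}\in M(u,v,N_1)$, and for every $n\geq N_1$ their images span $M(u,v,n)/I(u,v)[n/2]$ through the injection above, so this quotient has dimension exactly $r(u,v)$. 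The only genuine verification in this plan is the intersection formula $M(u,v,n)\cap I(u,v)=I(u,v)[n/2]$, which reduces cleanly to the characterization of $\stab(u,v,n)$ in Proposition \ref{prop stab}.3 combined with the defining relation of $H(L,K)$; I therefore expect no serious obstacle beyond careful bookkeeping of the valuation inequalities.
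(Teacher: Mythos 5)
Your proposal is correct and follows essentially the same route as the paper: both arguments reduce finiteness of $r(u,v)$ to a Dedekind-domain fact, both rest on the valuation inequality $2\omega(x)=\omega(N(x))=\omega(T(y))\geq\omega(y)\geq -n/e_P$ coming from the relation $N(x)+T(y)=0$ together with Proposition \ref{prop stab}.3, and both choose $N_1$ so that witnesses $(a_i,b_i)$ for a basis of $M(u,v)/I(u,v)$ already lie in $U(u,v,N_1)$. Your repackaging via the intersection formula $M(u,v,n)\cap I(u,v)=I(u,v)[\frac n2]$ and the induced injection into $M(u,v)/I(u,v)$ is just a cleaner presentation of the paper's direct decomposition $\sigma=a+\sum_i a_i f_i$.
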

\begin{proof}
Let $P(u,v)$ be the proper $B$-fractional ideal generated by $M(u,v) \subset L$. Then, since $B$ is a Dedekind domain, $P(u,v)/I(u,v)$ is a finite-dimensional $\F$-vector space. In particular, $M(u,v)/I(u,v)$ is a finite dimensional $\mathbb{F}$-vector space, which proves the finiteness of $r(u,v)$.

This implies that there exists a set $\lbrace a_i= a_i(u,v) \rbrace_{i=1}^{r}$ such that $M(u,v)=I(u,v) \oplus \bigoplus_{i=1}^r a_i \mathbb{F}$. For each $i \in \lbrace 1, \cdots, r\rbrace$, let $b_i \in L$, so that $ g_{u,v}^{-1}\mathrm{u}_a(a_i,b_i) g_{u,v}\in U(u,v)$.
Define then
\[N_1:=\max\lbrace N_0(u,v), \max \lbrace -e_P \cdot\omega(b_i): i \in \{1,\ldots,r\} \rbrace\rbrace.\]
Any $\sigma \in M(u,v,n)$ can be written as $\sigma= a + a_1f_1+ \cdots+ a_m f_m$, where $f_i \in \mathbb{F}$ and $a \in I(u,v)$. Let $\tau \in L$ such that $ g_{u,v}^{-1}\mathrm{u}_a(\sigma,\tau) g_{u,v}\in U(u,v,n)$. If $n> N_1$, by definition, we have $g_{u,v}^{-1}\mathrm{u}_a(a_i,b_i) g_{u,v}\in U(u,v,n)$, hence $a_i \in M(u,v,n)$ for all $i$, and thus $a \in M(u,v,n)$. Therefore, there exists $b \in L$ such that $g_{u,v}^{-1}\mathrm{u}_a(a,b) g_{u,v}\in U(u,v,n)$ and $\omega(b) \geq -\frac{n}{e_P}$. Then $\omega(a) = \frac{1}{2} \omega(\tr(b)) \geq \frac{1}{2} \omega(b) \geq -\frac{n}{2e_P}$, whence $a \in I(u,v)[\frac n2]$. The result follows.
\end{proof}

\begin{Def}
For $(u,v)\in H(L,K)$, define
\[N(u,v):= \max\left\lbrace  \frac{2}{f_Pd}(\deg(I(u,v))+2g_D-1), N_1(u,v)\right\rbrace.\]
We define $\rf_0(u,v)$ as the subray $\lbrace \lambda_{n}(u,v):n>N(u,v)\rbrace\subset\rf(u,v)$ of $\rf(u,v)$ and $\cf(u,v) \subset \overline{X}$ as the image of $\rf_0(u,v)$ in $\overline{X}$ via the canonical projection.

For $x=\lambda_n(u,v)\in \rf(u,v)$, we denote by $r_{x}$ the edge defined by $x=\lambda_n(u,v)$ and $\lambda_{n+1}(u,v)$.
\end{Def}

One must think of $r_x$ as the edge ``going outwards''. This will allow us to see the cusps as directed rays in some sense, which will come in handy when proving that they are actual rays in $\overline{X}$.

We start by using Riemann-Roch in order to bound the valency of the vertices in the cusps.

\begin{Lem}\label{lemma valency cusps}
Let $x\in\rf_0(u,v)$ be a lift of a vertex in $\cf(u,v)$. Then all edges different from $r_x$ lie on the same $G(A)$-orbit. In particular, every vertex of the subgraph $\cf(u,v)$ of $\overline{X}$ has valency at most two.
\end{Lem}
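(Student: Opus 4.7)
The strategy is to work at the level of stabilizers. Since the valency of the image of $x$ in $\overline X$ equals the number of $\stab_{G(A)}(x) = \stab(u,v,n)$-orbits on the set of edges at $x$, the lemma amounts to exhibiting exactly two such orbits: the singleton $\{r_x\}$ and the collection of all remaining edges. The first part is immediate from Proposition~\ref{prop stab}.1: the chain $\stab(u,v,n) \subseteq \stab(u,v,n+1)$ forces every element of $\stab(u,v,n)$ to stabilize both endpoints of $r_x$, hence to fix $r_x$ itself (the $G(A)$-action being without inversion).

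For the transitivity on the remaining edges, I would conjugate by $g_{u,v}$, reducing the question to showing that $g_{u,v}\stab(u,v,n)g_{u,v}^{-1}$ acts transitively on the set of edges at $\lambda_n(\infty)$ distinct from $e^+ := [\lambda_n(\infty),\lambda_{n+1}(\infty)]$. By Proposition~\ref{prop stab}.4, this group contains the unipotent subgroup
\[
    U_n^I := \{\mathrm{u}_a(x',y') : (x',y') \in H(L,K)_I,\ \omega(y') \geqslant -n/e_P\} \subseteq U_{a,\lambda_n(\infty)}(K_P).
\]
Since $\omega(y') \geqslant -n/e_P$ implies $\omega(y') \geqslant -m/e_P$ for every $m \geqslant n$, the subgroup $U_n^I$ lies in $U_{a,\lambda_m(\infty)}$ for every such $m$ and therefore fixes pointwise the entire half-apartment $\{\lambda_m(\infty) : m \geqslant n\}$; in particular, it fixes $e^+$.

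The decisive step is to show that $U_n^I$ already acts transitively on the edges at $\lambda_n(\infty)$ distinct from $e^+$. Standard Bruhat-Tits theory tells us that $U_{a,\lambda_n(\infty)}(K_P)$ itself acts transitively on these edges, with the stabilizer of $e^- := [\lambda_n(\infty),\lambda_{n-1}(\infty)]$ inside $U_{a,\lambda_n}$ being $U_{a,\lambda_{n-1}(\infty)}(K_P)$. Hence the set of edges at $\lambda_n(\infty)$ other than $e^+$ is in natural bijection with the finite-dimensional $\mathbb{F}$-vector space $U_{a,\lambda_n}/U_{a,\lambda_{n-1}}$, and the problem reduces to showing that $U_n^I$ surjects onto this quotient. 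This is where the threshold $N(u,v)$ and Riemann-Roch enter: the bound $n > \tfrac{2}{f_Pd}(\deg I(u,v)+2g_D-1)$ places us in the range of validity of the Riemann-Roch formula~\eqref{eq rr0} applied to $J=I(u,v)$, while Lemma~\ref{lemma ideals} together with the remark following it ensures $I(u,v)[\tfrac{n}{2}] \subseteq M(u,v,n)$. A direct dimension count then shows that the image of $U_n^I$ covers every coset of $U_{a,\lambda_{n-1}}$ in $U_{a,\lambda_n}$.

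The main obstacle is this final dimension-counting step: one must match the $\mathbb{F}$-dimension of the Bruhat-Tits quotient $U_{a,\lambda_n}/U_{a,\lambda_{n-1}}$, whose structure depends on whether $L_Q/K_P$ is ramified or unramified and on the Heisenberg filtration jumps of $U_a$ in the quasi-split $\SU(3)$ situation, with the Riemann-Roch difference $\dim_{\mathbb{F}} I(u,v)[\tfrac{n}{2}] - \dim_{\mathbb{F}} I(u,v)[\tfrac{n-1}{2}]$. Once this equality is verified, the desired surjection, and thereby the transitivity of the $\stab(u,v,n)$-action on the non-$r_x$ edges at $x$, follows immediately.
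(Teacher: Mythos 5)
Your reduction is the same as the paper's: conjugate by $g_{u,v}$, observe that the edges at $\lambda_n(\infty)$ other than $e^+$ are controlled by the quotient $U_{a,\lambda_n(\infty)}/U_{a,\lambda_{n-1}(\infty)}$ (the paper gets this from \cite[9.7(i), Def.~8.8]{Landvogt}, noting $U_{-a,\lambda_{n+1}(\infty)}$ and $U_{a,\lambda_{n-1}(\infty)}$ fix the whole star), and reduce to showing that the integral unipotents supplied by Proposition~\ref{prop stab}.4 cover that quotient. The problem is that this covering statement is precisely the content of the lemma, and you do not prove it: you defer it to a ``direct dimension count'' which you yourself label the main obstacle and leave unverified. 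As stated, the count does not even establish surjectivity: the image of $U_n^I$ in $U_{a,\lambda_n}/U_{a,\lambda_{n-1}}$ has dimension $\dim U_n^I-\dim\bigl(U_n^I\cap U_{a,\lambda_{n-1}}\bigr)$, so you would also have to control the kernel $U_n^I\cap U_{a,\lambda_{n-1}}$, and you would need the stabilizer of $e^-$ in $U_{a,\lambda_n}$ to be \emph{exactly} $U_{a,\lambda_{n-1}}$ (the paper only needs, and only invokes, the containment $U_{a,\lambda_{n-1}}\subseteq P_{y_n}^*$). Moreover the quantity you propose to match, $\dim_{\mathbb F} I(u,v)[\tfrac n2]-\dim_{\mathbb F} I(u,v)[\tfrac{n-1}2]$, concerns only the first coordinate of $H(L,K)$; at the filtration steps where the jump is of $2a$-type the relevant Riemann--Roch space is a quotient $I[m]/I[m-1]$ of the \emph{second} coordinate subject to the trace-zero constraint, and at the $a$-type steps the two coordinates are coupled by $N(x)+T(y)=0$. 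None of this bookkeeping is done.

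The paper avoids the dimension count entirely by an explicit factorization: given $\mathrm u_a(x,y)\in U_{a,\lambda_n(\infty)}$ with $\omega(y)=-\tfrac n{e_P}$, it splits into two cases according to whether $\omega(T(y))>\omega(y)$ or $\omega(T(y))=\omega(y)$, uses Riemann--Roch (via the surjection $I[-e_P\omega]/I[1-e_P\omega]\twoheadrightarrow\kappa_Q$, valid for $n>N(u,v)$) to extract a leading term $v_0$ (resp.~$u_0$) lying in $I=\bar I$ and, after the trace-zero correction $v_0\mapsto v_0-\tfrac12T(v_0)$, writes $\mathrm u_a(x,y)$ as an element of $g_{u,v}U(u,v,n)g_{u,v}^{-1}$ times an element of $U_{a,\lambda_{n-1}(\infty)}$. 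That two-case, element-by-element argument is the substance of the proof, and it is exactly the part your proposal replaces with an assumption. So the strategy is sound and matches the paper's, but the proof has a genuine gap at its decisive step.
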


\begin{proof}
For any $x \in v(X)$ we define $\mathcal{V}^1(x)$ as the star of $x$, i.e, the subcomplex of $X$ whose vertices are $x$ and its neighbors. Let $P_x=\text{Stab}_{G(K_P)}(x)$ and $P_x^{*}=\bigcap_{w \in v(\mathcal{V}^1(x))} P_w$. The group $P_x^{*}$ is a normal subgroup of $P_x$. We denote by $\pi_x: P_x \rightarrow G_x= P_x/P_x^{*}$ the canonical projection.

Let $x_n:=\lambda_n(u,v)\in \rf(u,v)$ and write $y_n:=\lambda_n(\infty)$ so that $x_n=g_{u,v}^{-1}y_n$. By \cite[9.7(i)]{Landvogt}, we know that for every such $x$ (resp.~$y$) there is a subgroup $U_{r_x}$ of $P_x$ (resp.~$U_{r_y}$ of $P_y$), generated by unipotent elements, acting transitively on the set of apartments containing $r_x$ (resp.~$r_y$). Hence, the image $\bar U_{r_x}$ of $U_{r_x}$ in $G_x$ acts transitively on the sets of edges $r\neq r_x$ joined to $x$. We only need to prove then that the image in $G_{x_n}$ of unipotent elements in $\stab(u,v,n)$ generate $\bar U_{r_{x_n}}$, which amounts to proving that the image in $G_{y_n}$ of $g_{u,v} U(u,v,n) g_{u,v}^{-1}$ generates $\bar U_{r_{y_n}}$.

By \cite[Def~8.8]{Landvogt}, we have that
\[U_{r_{y_n}}=\langle U_{-a,\lambda_{n+1}(\infty)},U_{a, \lambda_n(\infty)}\rangle\quad \text{and}\quad P_{y_n}^*\supseteq \langle U_{-a, \lambda_{n+1}(\infty)}, U_{a, \lambda_{n-1}(\infty)}\rangle,\]
where these groups were defined in \eqref{eqn def U_a_lambda}. So we only need to prove that $g_{u,v} U(u,v,n) g_{u,v}^{-1}$ covers the quotient $U_{a, \lambda_n(\infty)}/U_{a, \lambda_{n-1}(\infty)}$.

Consider then $\mathrm{u}_a(x,y) \in U_{a,\lambda_{n}(\infty)}$ with $\omega(y)=-\frac{n}{e_P}$. Assume first that $\omega(\tr(y))>\omega(y)$ and define $y':=y-\frac 12\tr(y)$. We have then $\omega(y')=\omega(y)$ and $\tr(y')=0$. We claim that we can write $y'$ as $y'=v_0+v_1$, with $v_0\in I=I(u,v)$, $\omega(v_0)=\omega(y')=\omega(y)$ and $\omega(v_1)>\omega(v_0)$, where $I(u,v)$ is defined in Proposition \ref{prop stab}. Indeed, 
under the condition $-\omega(y)>N(u,v)$, the quotient $I[-e_P\omega(y)]/I[1-e_P\omega(y)]$ has the same $\bb F$-dimension as $\kappa_Q$, as it can be deduced from equation \eqref{eq rr0}, hence such an element $v_0$ always exists. Moreover, we may assume that $\tr(v_0)=0$. Indeed, since $\tr(y')=0$, we conclude that $\tr(v_0)=-\tr(v_1)$. And since $\omega(-\tr(v_1))\geq\omega(-v_1)>\omega(v_0)$, we can replace
$v_0$ by $v_0-\frac12\tr(v_0)$ and get our claim since $\tr(v_0)\in I(u,v)$ (recall that $I=\overline{I}$).

We have then that $\mathrm{u}_a(0,v_0)\in g_{u,v} U(u,v,n) g_{u,v}^{-1}$ and, if we set $v_2=v_1+\frac12\tr(y)$, we get the matrix equality $\mathrm{u}_{a}(x,y)=\mathrm{u}_{a}(x,v_2)\mathrm{u}_{a}(0,v_0)$. Since both $\mathrm{u}_{a}(x,y)$ and $\mathrm{u}_{a}(0,v_0)$ are in $\cal U_a$, then so does $\mathrm{u}_{a}(x,v_2)$ and thus $N(x)+\tr(v_2)=0$. And since both $w_1$ and $\tr(v_1)$ have valuation greater than $\omega(y)$, we get that $\mathrm{u}_{a}(x,v_2)\in U_{a, \lambda_{n-1}(\infty)}$, which concludes the argument in this case.

Assume finally that $\omega(\tr(y))=\omega(y)$. Using once again the Riemann-Roch Theorem, we can write $x$ as $x=u_0+u_1$, with $u_0\in I=I(u,v)$, $\omega(u_0)=\omega(x)$ and $\omega(u_1)>\omega(x)$. Define $v_0:=-\frac 12u_0\overline{u_0}$, so that $v_0\in I$ (recall that $I$ is an ideal of $B$), $\omega(v_0)=2\omega(u_0)=2\omega(x)$ and $N(u_0)+\tr(v_0)=0$. In particular, we may consider the element $\mathrm{u}_{a}(u_0,v_0)\in \cal U_a$, which is actually in $g_{u,v} U(u,v,n) g_{u,v}^{-1}$ by Proposition \ref{prop stab}.4. Finally, define $v_1:=y+u_0\overline{u_1}-v_0$, so that we get the matrix equality
$\mathrm{u}_{a}(x,y)=\mathrm{u}_{a}(u_1,v_1)\mathrm{u}_{a}(u_0,v_0)$. In particular, since both $\mathrm{u}_{a}(x,y)$ and $\mathrm{u}_{a}(u_0,v_0)$ are in $\cal U_a$, we see that so does $\mathrm{u}_{a}(u_1,v_1)$ and thus $N(u_1)+\tr(v_1)=0$. If $\omega(v_1)>\omega(y)$, then $\mathrm{u}_{a}(u_1,v_1)\in U_{a, \lambda_{n-1}(\infty)}$ and we are done. Otherwise, since by construction we have $\omega(u_0\overline{u_1})>\omega(v_0)=\omega(y)$, we get $\omega(v_1)\geq\omega(y)$ and thus we may assume $\omega(v_1)=\omega(y)$.
Now, since $\omega(u_1)>\omega(x)$, we see that
\[\omega(\tr(v_1))=\omega(N(u_1))>\omega(N(x))=\omega(\tr(y))=\omega(y)=\omega(v_1).\]
So that up to replacing our element $\mathrm{u}_a(x,y) \in U_{a,\lambda_{n}(\infty)}$ by $\mathrm{u}_a(u_1,v_1)$, we are reduced to the first case, which concludes the proof.
\end{proof}

We would like to prove now that $\cf(u,v)$ is actually a ray in $\overline{X}$. In order to do this, we apply Lemma \ref{lemma ideals} (which depends on Riemann-Roch) as follows.

\begin{Lem}\label{lem c(u,v) is a ray}
If $\lambda_n(u,v)$ and $\lambda_m(u',v')$ are on the same $G(A)$-orbit with $n>N(u,v)$ and $m>N(u',v')$, then
\[f_P d \left\lfloor \frac{n}{2}\right\rfloor - \deg \big( I (u,v)\big) +r(u,v)= f_Pd \left\lfloor \frac{m}{2}\right\rfloor -\deg \big( I(u',v') \big)+r(u',v').\]
In particular, $\rf_0(u,v)$ does not have two vertices in the same $G(A)$-orbit, i.e.~$\cf(u,v)$ is a ray.
\end{Lem}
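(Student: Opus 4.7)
The plan is to produce a $G(A)$-orbit invariant of $\lambda_n(u,v)$ that reads off, via Riemann-Roch, the quantity appearing in the claimed equality, and then combine this equality with Lemma~\ref{lemma valency cusps} to force the projection $\rf_0(u,v)\to\overline X$ to be injective on vertices.

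First I would compute $\dim_{\bb F} M(u,v,n)$ for $n>N(u,v)$. Since $N(u,v)\geq N_1(u,v)$, Lemma~\ref{lemma ideals} yields $\dim_{\bb F} M(u,v,n) = \dim_{\bb F} I(u,v)[\lfloor n/2\rfloor]+r(u,v)$. The other half of the definition of $N(u,v)$, namely $N(u,v)\geq \frac{2}{f_P d}(\deg I(u,v)+2g_D-1)$, ensures that the Riemann-Roch formula \eqref{eq rr0} applies to the $B$-fractional ideal $I(u,v)$ at level $\lfloor n/2\rfloor$, producing
\[\dim_{\bb F} M(u,v,n)=f_P d\lfloor n/2\rfloor-\deg I(u,v)+1-g_D+r(u,v),\]
and an analogous formula for $(u',v',m)$. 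Thus the claimed equality reduces to $\dim_{\bb F} M(u,v,n)=\dim_{\bb F} M(u',v',m)$. To establish this, pick $g\in G(A)$ with $g\cdot\lambda_n(u,v)=\lambda_m(u',v')$ and set $h:=g_{u',v'}\,g\,g_{u,v}^{-1}\in G(K)$; then $h\cdot\lambda_n(\infty)=\lambda_m(\infty)$ and $h$ conjugates $g_{u,v}U(u,v,n)g_{u,v}^{-1}\subset\mathcal{U}_a(K)$ onto $g_{u',v'}U(u',v',m)g_{u',v'}^{-1}\subset\mathcal{U}_a(K)$. Because $U(u',v',m)$ already contains the nontrivial group $U(u',v',m)^0$ by Lemma~\ref{Lemma M}.1, the element $h$ cannot lie in the non-Borel Weyl coset (which would swap $a$ and $-a$ and push its image out of $\mathcal{U}_a(K)$); hence $h\in\mathcal{B}(K_P)\cap G(K)=\mathcal{B}(K)$. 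Writing $h=\widetilde a(s)\,\mathrm{u}_a(x_0,y_0)$, a direct matrix computation in the Heisenberg-type group $H(L,K)$ shows that the induced map $M(u,v,n)\to M(u',v',m)$ on first coordinates is just multiplication by $\bar s^2/s\in L^\times$, which is $\bb F$-linear and bijective, so the two dimensions coincide.

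For the ``in particular'' part, suppose $\lambda_n(u,v)$ and $\lambda_{n'}(u,v)$, with $N(u,v)<n'<n$, lie in the same $G(A)$-orbit. The first part (with $(u',v')=(u,v)$ and $m=n'$) forces $\lfloor n/2\rfloor=\lfloor n'/2\rfloor$, so $n=n'+1$ with $n'$ even. Then $\bar\lambda_{n'}=\bar\lambda_{n'+1}$ in $\overline X$, so the edge $[\lambda_{n'},\lambda_{n'+1}]\subset\rf_0(u,v)$ projects to a loop at $\bar\lambda_{n'}$ inside $\cf(u,v)$, contributing $2$ to its valency. Moreover, $\lfloor(n'+1)/2\rfloor\neq\lfloor(n'+2)/2\rfloor$ gives $\bar\lambda_{n'+1}\neq\bar\lambda_{n'+2}$, so the edge $[\lambda_{n'+1},\lambda_{n'+2}]\subset\rf_0(u,v)$ contributes an additional non-loop edge at $\bar\lambda_{n'+1}=\bar\lambda_{n'}$, bringing the valency at $\bar\lambda_{n'}$ in $\cf(u,v)$ to at least $3$ and contradicting Lemma~\ref{lemma valency cusps}. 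Hence no two distinct vertices of $\rf_0(u,v)$ share a $G(A)$-orbit, which together with the valency bound shows $\cf(u,v)$ is a ray.

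The main obstacle I anticipate is the Borel reduction in the second step: carefully ruling out the non-Borel Weyl coset for $h$ (using the nontriviality of $U^0$) and then extracting the $\bb F$-linearity of the induced map $M(u,v,n)\to M(u',v',m)$ from the $\mathcal{B}(K)=\mathcal{T}(K)\cdot\mathcal{U}_a(K)$ decomposition of $h$ together with the conjugation formulas of Proposition~\ref{prop stab}.
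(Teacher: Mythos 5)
Your proof is correct and follows the same overall strategy as the paper's: the orbit invariant is $\dim_{\bb F}M(u,v,n)$, computed via Lemma \ref{lemma ideals} and Riemann--Roch, and its invariance along an orbit is obtained by showing that $h=g_{u',v'}\,g\,g_{u,v}^{-1}$ lies in $\cal B(K)$ and multiplies first coordinates by the scalar $\bar s^2/s$. The one step you do differently is the reduction of $h$ to $\cal B(K)$: the paper equates the two sides of $h\,\mathrm{u}_a(u_1,v_1)=\mathrm{u}_a(u_2,v_2)\,h$ entry by entry and kills $a_{31},a_{32},a_{21}$ in turn, whereas you invoke the Bruhat decomposition and note that an element $b_1\mathrm{s}b_2$ of the big cell conjugates $\cal U_a(K)$ onto $b_1\cal U_{-a}(K)b_1^{-1}$, which meets $\cal U_a(K)$ trivially; since the conjugated group is nontrivial this rules out the big cell. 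This is sound and arguably cleaner, with one small imprecision: Lemma \ref{Lemma M}.1 only gives nontriviality of $U(u',v')$, not of the vertex stabilizer $U(u',v',m)$; the nontriviality you actually need is that of $M(u,v,n)\supseteq I(u,v)[\frac n2]$, which follows from Riemann--Roch exactly as in your first paragraph (and as in the paper). Finally, you make explicit the deduction of the ``in particular'' clause — ruling out the residual case $n=n'+1$ with $n'$ even by exhibiting a loop plus a non-loop edge at $\overline{\lambda_{n'}}$ and contradicting the valency bound of Lemma \ref{lemma valency cusps} — a step the paper leaves implicit (it can also be settled by observing that adjacent vertices of $X$ have different types and so are never $G(A)$-equivalent). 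Your treatment of this point is valid, granting the standard conventions that the action has no inversions and that a loop contributes two to the valency.
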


\begin{proof}
Suppose that there exists $g \in G(A)$ such that $g\cdot\lambda_n(u,v)=\lambda_m(u',v')$. Then the element $h = g_{u',v'} g g_{u,v}^{-1}\in G(K)$ satisfies that $h U(u,v,n) h^{-1}=U(u',v',m)$. On the other hand, by Lemma \ref{lemma ideals}, we have
\begin{equation}\label{eq dim T and I}
\dim_\mathbb{F}(M(u,v,n)) = \dim_{\mathbb{F}}(I(u,v)[\textstyle{\frac n2}])+r(u,v),
\end{equation}
and an analogous equality holds for $M(u',v',m)$. So, it follows from the hypothesis on $m$ and $n$ and the Riemann-Roch Theorem (cf.~equation \eqref{eq rr0}) that $M(u,v,n)$ and $M(u',v',m)$ are both nontrivial. In particular, there are upper triangular matrices in $U(u,v,n)$ with nontrivial coordinates above the diagonal. Let $g_1=\mathrm{u}_{a}(u_1,v_1) \in  U(u,v,n)$, with $u_1, v_1 \neq 0$, and write $\text{Mat}(h)=(a_{ij})_{i,j=1}^3$. So, it follows from $h U(u,v,n) h^{-1}=U(u',v',m)$, that there exists $g_2=\mathrm{u}_{a}(u_2,v_2) \in  U(u',v',m)$ such that
$$
\textnormal{\footnotesize$\begin{pmatrix}
        a_{11}    &  a_{12}  &   a_{13}  \\
        a_{21}   &   a_{22}   & a_{23}    \\
        a_{31}   & a_{32} &   a_{33} \\
    \end{pmatrix}$\normalsize} \textnormal{\footnotesize$\begin{pmatrix}
        1    &  -\overline{u_1}  &   v_1 \\
        0   &   1   & u_1    \\
        0   & 0 &  1 \\
    \end{pmatrix}$\normalsize} =
    \textnormal{\footnotesize$\begin{pmatrix}
        1    &  -\overline{u_2}  &   v_2 \\
        0   &   1   & u_2    \\
        0   & 0 &  1 \\
    \end{pmatrix}$\normalsize} \textnormal{\footnotesize$\begin{pmatrix}
        a_{11}    &  a_{12}  &   a_{13}  \\
        a_{21}   &   a_{22}   & a_{23}    \\
        a_{31}   & a_{32} &   a_{33} \\
    \end{pmatrix}$\normalsize},
$$
whence we have 
$$
    \textnormal{\footnotesize$\begin{pmatrix}
        a_{11}    &  a_{12}-\overline{u_1}a_{11}  &   a_{13}+u_1 a_{12}+v_1 a_{11}  \\
        a_{21}   &   a_{22}-\overline{u_1}a_{21}   & a_{23}+ u_1 a_{22}+ v_1 a_{21}   \\
        a_{31}   & a_{32}-\overline{u_1}a_{31} &   a_{33} + u_1 a_{32}+ v_1 a_{31} \\
    \end{pmatrix}$\normalsize}=
    \textnormal{\footnotesize$\begin{pmatrix}
        a_{11}- \overline{u_2} a_{21}+ v_2 a_{31}    &   a_{12}- \overline{u_2} a_{22}+ v_2 a_{32} &    a_{13}- \overline{u_2} a_{23}+ v_2 a_{33}  \\
        a_{21}+ u_2 a_{31}   &   a_{22}+ u_2 a_{32}   & a_{23}+ u_2 a_{33}    \\
        a_{31}   & a_{32} &   a_{33}\\
    \end{pmatrix}$\normalsize}.
$$
Thus, if we analyze the coordinates in the last row of the previous matrices, we deduce $a_{31}=0$, and then $a_{32}=0$. Note that, if $u_2=0$ (and hence $v_2\neq 0$), then, it follows from the previous equation, by comparing the second columns of both matrices, that $a_{11}=a_{21}=0$, which contradicts the fact that $h$ is invertible. Thus, we may assume that $u_2 \neq 0$. We deduce then, by comparing the first columns of the previous matrices, that $a_{21}=0$. Finally, we conclude that $\text{Mat}(h)$ is a upper triangular matrix. In other words, $h=\widetilde{a}(t)\mathrm{u}_{a}(z,w)$, for some $t \in L^{\times}$ and some $(z,w) \in H(L,K)$. Hence, for $\mathrm{u}_{a}(x,y)\in U(u,v,n)$ we have
\[h\mathrm{u}_{a}(x,y) h^{-1}= \mathrm{u}_{a}\left(\overline{t}^2/t x, t\overline{t}(y+\overline{x}z-\overline{z}x)\right).\]
This implies that $M(u',v',m) = \lambda M(u,v,n)$, where $\lambda = \overline{t}^2/t \in L^{\times}$. In particular, $\dim_{\mathbb{F}}(M(u',v',m)) = \dim_{\mathbb{F}}(M(u,v,n))$ and thus equation \eqref{eq dim T and I} together with the Riemann-Roch Theorem imply that
\begin{multline*}
f_P d \left\lfloor \frac{n}{2}\right\rfloor + 1-\text{deg}\big( I(u,v)\big)-g +r(u,v)=f_P d\left\lfloor \frac{m}{2}\right\rfloor+ 1-\text{deg}\big(I(u',v'))-g +r(u',v'),
\end{multline*}
and the result follows.
\end{proof}

The next result builds on the last two and gives a criterion for the equivalency of two rays $\cf(u,v)$ and $\cf(u',v')$. Moreover, this result shows that the intersection of non-equivalent rays on $\overline{X}$ is a finite subgraph of $\overline{X}$. Obviously, this results applies to the image in $\overline{X}$ of $\rf(\infty)$ too, by replacing $\rf(0,0)$ by $\rf(\infty)$.

\begin{Prop}\label{prop link}
Suppose that $\lambda_n(u,v)$ and $\lambda_m(u',v')$, are in the same $G(A)$-orbit for some $n > N(u,v)$ and $m >N(u',v')$. Then $\lambda_{n+t}(u,v)$ and $\lambda_{m+t}(u',v')$ are in the same $G(A)$-orbit, for all $t \in \mathbb{Z}_{\geq 0}$.
\end{Prop}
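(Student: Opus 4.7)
The plan is to reduce the problem to a calculation on the standard ray $\rf(\infty) \subset \mathbb{A}$, where the action of the Borel subgroup can be exploited directly. Starting from an element $g \in G(A)$ realizing $g \cdot \lambda_n(u,v) = \lambda_m(u',v')$, set $h := g_{u',v'}\,g\,g_{u,v}^{-1} \in G(K)$. Since $\lambda_k(u,v) = g_{u,v}^{-1}\cdot\lambda_k(\infty)$ and similarly for $(u',v')$, the hypothesis becomes $h\cdot\lambda_n(\infty) = \lambda_m(\infty)$, while the desired conclusion reduces to $h\cdot\lambda_{n+t}(\infty) = \lambda_{m+t}(\infty)$ for every $t \geq 0$.

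The conditions $n > N(u,v)$ and $m > N(u',v')$ allow us to invoke the argument already carried out in the proof of Lemma~\ref{lem c(u,v) is a ray}, which shows that $h$ is upper triangular, hence belongs to the Borel subgroup $\mathcal{B}(K_P) = \mathcal{T}(K_P)\cdot\mathcal{U}_a(K_P)$. Both factors fix the visual limit $\partial_\infty(\rf(\infty))$: $\mathcal{T}(K_P)$ acts on $\mathbb{A}$ by translation along $a^\vee$ and therefore preserves the direction of $\rf(\infty)$, while each element of $\mathcal{U}_a(K_P)$ fixes pointwise a sub-half of $\rf(\infty)$ (cf.~\eqref{eqn def U_a_lambda}). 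Consequently $h$ fixes $\partial_\infty(\rf(\infty))$, so $h\cdot\rf(\infty)$ is a geodesic ray in $X$ sharing its visual limit with $\rf(\infty)$.

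The key observation is then that both $h\cdot\rf(\infty)$ and $\rf(\infty)$ pass through the common point $\lambda_m(\infty) = h\cdot\lambda_n(\infty)$. In a tree, two rays sharing both a visual limit and a common point coincide from that point onward, so $h\cdot\rf(\infty)$ and $\rf(\infty)$ agree as (oriented) sub-rays beyond $\lambda_m(\infty)$. Since $h$ is an isometry preserving the orientation toward the common visual limit, and since $\lambda_{n+t}(\infty)$ lies $t$ edges past $\lambda_n(\infty)$ along $\rf(\infty)$ in that direction, we conclude that $h\cdot\lambda_{n+t}(\infty) = \lambda_{m+t}(\infty)$ for all $t \geq 0$. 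Pulling back through $g_{u,v}^{-1}$ and $g_{u',v'}^{-1}$ yields the desired $g\cdot\lambda_{n+t}(u,v) = \lambda_{m+t}(u',v')$.

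The main obstacle in this plan is the upper-triangularity of $h$, which is not immediate and relies on the Riemann--Roch-based computation already performed in the proof of Lemma~\ref{lem c(u,v) is a ray}; this is also what forces the hypotheses $n > N(u,v)$ and $m > N(u',v')$. Once this ingredient is available, the rest of the argument reduces to elementary properties of isometries of trees together with the standard fact that the Borel subgroup of $G(K_P)$ fixes the end of $X$ corresponding to $\rf(\infty)$.
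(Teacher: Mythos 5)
Your argument is correct, but it takes a genuinely different route from the paper. The paper works entirely downstairs in $\overline{X}$: it reduces to $t=1$, uses Lemma \ref{lemma valency cusps} to see that $\overline{\lambda_{n+1}(u,v)}$ must be one of the two neighbours $\overline{\lambda_{m+1}(u',v')}$ or $\overline{\lambda_{m-1}(u',v')}$, and then rules out the backward option by applying the numerical invariant of Lemma \ref{lem c(u,v) is a ray} to both pairs and comparing the resulting floor-function identities. You instead work upstairs in $X$: you extract from the proof of Lemma \ref{lem c(u,v) is a ray} the fact that $h=g_{u',v'}\,g\,g_{u,v}^{-1}$ is upper triangular, hence lies in $\cal B(K)$ and fixes the end $\partial_\infty(\rf(\infty))$, and then invoke the uniqueness of the geodesic ray from a given vertex to a given end of a tree. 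Both routes ultimately rest on the same Riemann--Roch input (the nontriviality of $M(u,v,n)$ and $M(u',v',m)$, which is what makes the upper-triangularity computation go through), so you are not getting the result more cheaply; but your version buys two things. First, it proves the stronger statement that a \emph{single} $g$ carries $\lambda_{n+t}(u,v)$ to $\lambda_{m+t}(u',v')$ for all $t$ simultaneously, and it does not need Lemma \ref{lemma valency cusps} at all. Second, it bypasses the parity bookkeeping hidden in the paper's floor-function comparison. The one stylistic defect is that you cite an intermediate step buried inside the proof of Lemma \ref{lem c(u,v) is a ray} rather than a stated result; if you were writing this up you should isolate the assertion ``under the hypotheses $n>N(u,v)$ and $m>N(u',v')$, any $g\in G(A)$ with $g\cdot\lambda_n(u,v)=\lambda_m(u',v')$ satisfies $g_{u',v'}\,g\,g_{u,v}^{-1}\in\cal B(K)$'' as a lemma in its own right, since that is the actual content you reuse. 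Apart from that, every step is valid: the stabilizer of $\partial_\infty(\rf(\infty))$ in $G(K)$ is indeed $\cal B(K)$ (this is used explicitly in the proofs of Proposition \ref{prop stab} and Proposition \ref{prop cusps}), and the tree argument correctly matches up the consecutively numbered vertices of the two sub-rays beyond the common point.
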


\begin{proof} We denote by $\overline{w}$ the image of $w \in v(X)$ in $\overline{X}$. By an inductive argument we can reduce our proof to the case $t=1$. By Lemma \ref{lemma valency cusps}, if $\overline{\lambda_n(u,v)}=\overline{\lambda_m(u',v')}$ then $\overline{\lambda_{n+1}(u,v)} \in \left\lbrace  \overline{\lambda_{m+1}(u',v')},\overline{\lambda_{m-1}(u',v')} \right\rbrace $. Assume that $\overline{\lambda_{n+1}(u,v)}=\overline{\lambda_{m-1}(u',v')}$, then by Lemma \ref{lem c(u,v) is a ray} we have that
\[f_P d \left\lfloor \frac{m}{2}\right\rfloor- \deg (I (u,v) )  +r(u',v') = f_P  d \left\lfloor \frac{n}{2}\right\rfloor- \deg I(u',v') +r(u,v),\]
and 
\[
f_P  d \left\lfloor \frac{m-1}{2}\right\rfloor- \deg (I (u,v) ) +r(u',v') =  f_P  d \left\lfloor \frac{n+1}{2}\right\rfloor  - \deg ( I(u',v') ) +r(u,v), \]
whence we get a contradiction. Hence $\overline{\lambda_{n+1}(u,v)}=\overline{\lambda_{m+1}(u',v')}$, and the result follows.
\end{proof}

As a consequence, we get the following result, which will allow us to define the cusps in the quotient graph, that is, the ``legs of the spider''.

\begin{Cor}\label{cor legs}
Suppose that $\cf(u,v)$ is equivalent to $\cf(u',v')$. Then either $\cf(u,v)$ is a subgraph of $\cf(u',v')$ or the converse.
\end{Cor}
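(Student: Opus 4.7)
The plan is to translate the equivalence of the two rays into a single common vertex and then propagate that coincidence in both directions of the rays. By the definition of equivalence of rays, together with the fact that $\cf(u,v)$ and $\cf(u',v')$ are rays (Lemma \ref{lem c(u,v) is a ray}), I obtain integers $n_0 > N(u,v)$ and $m_0 > N(u',v')$ with $\overline{\lambda_{n_0}(u,v)} = \overline{\lambda_{m_0}(u',v')}$. Proposition \ref{prop link} then takes care of the forward propagation for free: $\overline{\lambda_{n_0 + t}(u,v)} = \overline{\lambda_{m_0 + t}(u',v')}$ for every $t \geq 0$.

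The main step will be to propagate the matching \emph{backward}. After swapping the two cusps if necessary, which accounts for the two possible conclusions in the corollary, one may assume $m_0 - N(u',v') \geq n_0 - N(u,v)$. I would then prove by induction on $s \in \{0, \ldots, n_0 - N(u,v) - 1\}$ that $\overline{\lambda_{n_0 - s}(u,v)} = \overline{\lambda_{m_0 - s}(u',v')}$. In the inductive step, the common vertex $v = \overline{\lambda_{n_0 - s}(u,v)}$ belongs to $\cf(u,v)$, and so has valency at most $2$ in $\overline{X}$ by Lemma \ref{lemma valency cusps}. Four candidate neighbors appear, namely $\overline{\lambda_{n_0-s\pm 1}(u,v)}$ and $\overline{\lambda_{m_0-s\pm 1}(u',v')}$, and they must collapse to at most two distinct vertices; since one collapse $\overline{\lambda_{n_0-s+1}(u,v)} = \overline{\lambda_{m_0-s+1}(u',v')}$ is already known (either from the previous inductive step or from Proposition \ref{prop link} when $s \leq 1$), and since forward and backward neighbors inside each ray are distinct by Lemma \ref{lem c(u,v) is a ray}, the remaining identification $\overline{\lambda_{n_0-s-1}(u,v)} = \overline{\lambda_{m_0-s-1}(u',v')}$ is forced by a simple counting argument.

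Combining the forward and backward identifications, the assignment $\overline{\lambda_k(u,v)} \mapsto \overline{\lambda_{k + (m_0 - n_0)}(u',v')}$ is a well-defined injection of vertex sets from $\cf(u,v)$ into $\cf(u',v')$: the WLOG hypothesis guarantees that $k > N(u,v)$ implies $k + (m_0 - n_0) > N(u',v')$. Since both cusps are rays, this vertex map automatically extends to an inclusion of subgraphs $\cf(u,v) \subseteq \cf(u',v')$, and the reverse inclusion arises when the WLOG is taken in the other direction.

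The main obstacle I expect is the boundary behaviour of the backward induction: the valency bound from Lemma \ref{lemma valency cusps} applies only to vertices of $\cf(u,v)$ (resp.~$\cf(u',v')$), so one needs the index to stay strictly above $N(u,v)$ (resp.~$N(u',v')$) throughout the induction. The WLOG hypothesis is chosen precisely so that the induction exhausts $\cf(u,v)$ but never runs off the end of $\cf(u',v')$, which is exactly what yields the desired containment.
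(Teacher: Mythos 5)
Your proposal is correct and takes essentially the same route as the paper: a common vertex, forward propagation via Proposition \ref{prop link}, and a backward induction forced by the valency bound of Lemma \ref{lemma valency cusps} together with the injectivity of the rays from Lemma \ref{lem c(u,v) is a ray}, until the tip of one cusp is reached. Your version merely spells out the counting argument and the boundary bookkeeping that the paper leaves implicit.
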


\begin{proof}
Since the two rays are equivalent, we know that they contain at least one common vertex $\bar x$, i.e.,  $\bar x=\overline{\lambda_n(u,v)}=\overline{\lambda_m(u',v')}$, with $n>N(u,v)$ and $m>N(u',v')$. By Proposition \ref{prop link}, we know that there is a ray in $\overline{X}$ starting at $\bar x$ that is contained both in $\cf(u,v)$ and $\cf(u',v')$. If $\bar x$ is the terminal vertex of either $\cf(u,v)$ or $\cf(u',v')$, then we are done. Otherwise, by Lemma \ref{lemma valency cusps}, we know that $x$ has valency 2 and hence $\cf(u,v)$ and $\cf(u',v')$ must coincide on the preceding vertex, that is $\overline{\lambda_{n-1}(u,v)}=\overline{\lambda_{m-1}(u',v')}$. Arguing by induction, we may assume that $\bar x$ is the terminal vertex of at least one of the two rays, which concludes the proof.
\end{proof}

\begin{Rq}
Let $\partial_\infty(X)_\mathrm{rat}$ be the set of visual limits of the $\rf(u,v)$ for $(u,v) \in H(L,K) \cup \{\infty\}$. The set of visual limits $\partial_\infty(X)$ of $X$ (which corresponds to the spherical building associated to $(G,K_P)$) may be identifed with $H(L_Q,K_P) \cup \{\infty\}$. With this identification, $\partial_\infty(X)_\mathrm{rat}$ identifies with $H(L,K) \cup \{\infty\}$ which is a strict subset.
Thus, there are rays $g \cdot \rf(\infty)$ for $g \in G(K_P) \smallsetminus G(K)$ whose visual limit is in  $\partial_\infty(X) \smallsetminus \partial_\infty(X)_\mathrm{rat}$.
With our method, which follows Mason's method for $\SL_2$, we cannot say anything about the behaviour of these rays. In particular, we cannot exclude that their image in $\overline{X}$ is eventually a cusp ray contained in the ``body of the spider'', providing ``irrational cusps'' inside it. This was discarded for $\SL_2$ by Serre using deeper results on vector bundles of rank $2$.
\end{Rq}

\section{Lattice interpretation of vertices and edges}\label{sec lattices}

In this section, we use a rather well-known description of $X$ in terms of three dimensional lattices (c.f.~for instance \cite[Ex.~3.11]{Tits} or \cite{BT-SU}) to study the set of neighbors of a fixed vertex in $X$. This point of view will be useful when dealing with examples in \S\ref{examples} and in some proofs in \S\ref{sec number}.\\

Following \cite[\S 2]{C}, we define the Bruhat-Tits tree for $\mathrm{SU}(h)$ in terms of the Bruhat-Tits building for $\mathrm{SL}_3$. One of the two types of vertices of the tree are the vertices in the building that correspond to the homothety classes $[M]$  of unimodular lattices $M$, where $[\Lambda]$ denotes the class of a lattice $\Lambda$,
while the other corresponds to pairs of neighboring classes of
the form $([\Lambda],[\hat{\Lambda}])$, where $\hat{\Lambda}=\{x\in L^3\mid h(x,\Lambda)\subseteq B\}$. Inside a (suitable) fixed apartment, the vertex corresponding to a pair $([\Lambda],[\hat{\Lambda}])$ can be seen as the middle point of the edge joining the vertices corresponding to $[\Lambda]$ and $[\hat{\Lambda}]$, as it is shown in Figure \ref{Figure building SL3}. Note that this construction is related to a more general result by Prasad and Yu in \cite{Prasad-Yu}.

\begin{figure}[h!]
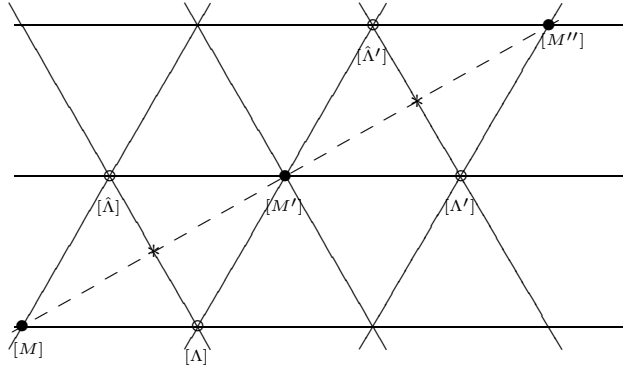

$$
\xygraph{
!{<0cm,0cm>;<1cm,0cm>:<0cm,1cm>::}
!{(-0.2,0) }*+{}="a1"
!{(-0.2,2) }*+{}="a2"
!{(-0.2,4) }*+{}="a3"
!{(8,0) }*+{}="b1"
!{(8,2) }*+{}="b2"
!{(8,4) }*+{}="b3"
!{(-.231,-.4) }*+{}="c1"
!{(2.079,-.4) }*+{}="c2"
!{(4.381,-.4) }*+{}="c3"
!{(2.541,4.4) }*+{}="d1"
!{(4.851,4.4) }*+{}="d2"
!{(7.161,4.4) }*+{}="d3"
!{(-.231,4.4) }*+{}="e1"
!{(2.079,4.4) }*+{}="e2"
!{(4.381,4.4) }*+{}="e3"
!{(2.541,-.4) }*+{}="f1"
!{(4.851,-.4) }*+{}="f2"
!{(7.161,-.4) }*+{}="f3"
!{(-.231,-.133) }*+{}="g1"
!{(7.161,4.133) }*+{}="g2"
!{(0,0) }*+{\bullet}="h1"
!{(3.465,2) }*+{\bullet}="h2"
!{(6.93,4) }*+{\bullet}="h3"
!{(.1,-.3) }*+{{}_{{}_{[M]}}}="h1n"
!{(3.465,1.6) }*+{{}_{{}_{[M']}}}="h2n"
!{(7.13,3.8) }*+{{}_{{}_{[M'']}}}="h3n"
!{(1.7325,1) }*+{*}="j1"
!{(5.1975,3) }*+{*}="j2"
!{(1.155,2) }*+{\circ}="k1"
!{(2.31,0) }*+{\circ}="k2"
!{(4.62,4) }*+{\circ}="k3"
!{(5.775,2) }*+{\circ}="k4"
!{(1.155,1.6) }*+{{}_{{}_{[\hat{\Lambda}]}}}="k1n"
!{(2.31,-.4) }*+{{}_{{}_{[\Lambda]}}}="k2n"
!{(4.62,3.6) }*+{{}_{{}_{[\hat{\Lambda}']}}}="k3n"
!{(5.775,1.6) }*+{{}_{{}_{[\Lambda']}}}="k4n"
"a1"-"b1" "a2"-"b2" "a3"-"b3"
"c1"-"d1" "c2"-"d2" "c3"-"d3"
"e1"-"f1" "e2"-"f2" "e3"-"f3"
"g1"-@{--}"g2"
} 
$$
\caption{The building for the unitary group inside the building for $\mathrm{SL}_3$.}\label{Figure building SL3}
\end{figure}

This gives a simple way to describe the neighbors of each vertex:

\subparagraph{Neighbors of a unimodular lattice.} Consider the class of a local unimodular lattice $M$. This is the case for instance of the special vertex $\lambda_0(\infty)$, which corresponds to the class of the lattice $\Lambda_0=\mathcal{O}_Q^3$. The neighbors of $[M]$ are the pairs $([\Lambda]$,$[\hat{\Lambda}])$, where $\Lambda$ is a maximal proper integral sublattice of $M$ and $\hat{\Lambda}=M+\mathcal{O}_Q\pi_Q^{-1}v$, with $v\in M$ primitive. The condition that $\Lambda$ and $\hat{\Lambda}$ are neighbors translates as $\pi_Q$ dividing $h(v,v)$.

Recall that, when the cover $\psi:D\rightarrow C$ ramifies at $Q$, we have $\bar x\equiv x\ (\mathop{\mathrm{mod}}\pi_Q)$ for $x\in\cal O_Q$. Therefore, if we denote by $M_Q$ the quotient 
$M/\pi_Q M\cong\kappa_Q^3$, the induced form $h_Q:M_Q\times M_Q\to\kappa_{Q}$ is a non-singular symmetric bilinear form. When $\psi$ is inert at $P$, the same construction defines a hermitian form over the residue field. In either case, the neighbors of the lattice $M$ are in correspondence with the isotropic lines in the space $M_Q$.

\subparagraph{Neighbors of a pair of lattices.}
Every hermitian lattice has a Jordan decomposition, i.e.~an orthogonal decomposition of the form
\[\Lambda=\Lambda_m\perp\Lambda_{m+1}\perp\cdots\perp\Lambda_n,\]
for some integers $m\leq n$, where each lattice $\Lambda_i$ is $\pi_Q^{i}$-modular, in the sense that $h(v,\Lambda_i)=\pi_Q^{i}\mathcal{O}_Q$ for every primitive vector $v\in\Lambda_i$. Note that
$\pi_Q\Lambda_i$ is $\pi_Q^{i+2}$-modular. A simple computation shows that 
$$\hat{\Lambda}=\pi_Q^{-m}\Lambda_m\perp\pi_Q^{-m-1}\Lambda_{m+1}\perp\cdots\perp\pi_Q^{-n}\Lambda_n.$$
In particular, two mutually dual classes $[\Lambda]$ and  $[\hat{\Lambda}]$ are neighbors in the building  of $\SL_3$ if and only if each is a sum of precisely two consecutive Jordan components. Multiplying by a scalar and replacing $\Lambda$ by $\hat{\Lambda}$ if needed, we might assume $\Lambda=\Lambda_0\perp\Lambda_1$. By discriminant considerations, we conclude that $\Lambda_0$ has rank $1$ and $\Lambda_1$ has rank $2$. For such a pair, its unimodular neighbors in the tree of $\su(h)$ have the form $\Lambda+\mathcal{O}_Q\pi_Q^{-1}v$, where $v\in \Lambda_1$ satisfies $h(v,v)\in\pi_Q^2\mathcal{O}_Q$. Up to a re-scaling, i.e.~replacing $h$ by $\pi_Qh$, we can again reduce the problem to the study of isotropic lines in a two dimensional $\kappa_Q$-space $\Lambda_{1,Q}=\Lambda_1/\pi_Q\Lambda_1$. Note however that the re-scaling must be done carefully since we still want to deal with a hermitian or a skew-hermitian form. Therefore, we assume that $\pi_Q\in K_P$ in the unramified case, while we choose $\pi_Q$ of trace $0$ in the ramified case. In the latter case, $\pi_Qh$ is skew-hermitian, whence it induces a skew-symmetric form in the space $\Lambda_{1,Q}$.

\subparagraph{Forms and lattices at split places.}
Even though we are assuming throughout that $\psi$ does not split at $P$ (since otherwise we would not have a tree to study), we need to analyze the behaviour of these forms at split places for use in next section.
  Choose a point $P'\neq P$. We assume that 
  $\psi^{-1}(P')=\{Q_1,Q_2\}$. In this case, we recall that
  $L_{P'}:=K_{P'}\otimes_KL\cong L_{Q_1}\times L_{Q_2}\cong K_{P'}\times K_{P'}$. In this ring, the involution computes as $\overline{(a,b)}=(b,a)$, so in particular $\bar\rho=1-\rho$, if 
  $\rho$ denotes the idempotent $(1,0)$. In this case, the closure of the three dimensional space $L^3$ is isomorphic to $K_{P'}^3\times K_{P'}^3$ as a module over the ring 
  $L_{P'}$. Furthermore, note that $h(\rho v,\rho w)=\rho\bar\rho h(v,w)=0$, and the same holds for 
  $\bar\rho =1-\rho$. We conclude that 
  \begin{equation}\label{eqn hsplit}
  h\Big((v_1,v_2),(w_1,w_2)\Big)=b(v_1,w_2)+b(v_2,w_1),     
  \end{equation}
for a suitable symmetric bilinear form over $K$. Furthermore, the ``ring of integers'' is $B_{P'}\cong B_{Q_1}\times B_{Q_2}$, and lattices have the form $M=M_1\times M_2$. In this context,
unimodular $B$-lattices give rise to pairs of dual lattices,
with respect to the symmetric form $b$.

\section{On the number of cusps}\label{sec number}

In this section we parametrize cusps of $\overline{X}=G(A)\backslash X$, i.e.~the ``legs of the spider'', in terms of the Picard group of $B$. More precisely, we have the following result.

\begin{Prop}\label{prop cusps}
There are natural bijections between the following sets:
\begin{enumerate}
    \item the set $\cf(\overline{X})$ of cusps of the quotient graph $\overline{X}$ that are represented by $\cf(u,v)$ for some $(u,v)\in H(L,K)$;
    \item the double quotient $G(A)\backslash G(K)/\cal B(K)$;
    \item the set of $G(A)$-orbits of isotropic lines in $L^3$, where the action comes from the natural action of $\SL_{3}(L)$;
    \item the Picard group $\pic(B)$.
\end{enumerate}
\end{Prop}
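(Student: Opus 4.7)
The plan is to construct a cycle of natural maps connecting the four sets and verify each is a bijection, with the final one being by far the most delicate.

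\textbf{Bijections (1)$\leftrightarrow$(2)$\leftrightarrow$(3).} A cusp $\cf(u,v)$ is determined by the visual limit $g_{u,v}^{-1}\partial_\infty\rf(\infty)\in\partial_\infty(X)$, since two cusp rays of $\overline X$ lift to rays of $X$ with $G(A)$-equivalent visual limits. The stabilizer in $G(K)$ of $\partial_\infty\rf(\infty)$ is the standard Borel $\cal B(K)$ (by construction of $\cal B$ as corresponding to the positive chamber containing $a^\vee$). Combined with Lemma~\ref{lem cover} and the remark following Corollary~\ref{cor legs}, this identifies $G(K)/\cal B(K)$ with the set $\partial_\infty(X)_{\mathrm{rat}}$, the $G(K)$-orbit of $\partial_\infty\rf(\infty)$. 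Quotienting by $G(A)$ on the left yields (1)$\leftrightarrow$(2). For (2)$\leftrightarrow$(3), inspection of the matrix forms in Section~\ref{sec param} shows that $\cal B=\cal T\cdot\cal U_a$ is precisely the stabilizer in $G$ of the isotropic line $L\cdot e_{-1}$ with $e_{-1}=(1,0,0)$; by Witt's theorem applied to the isotropic hermitian form $h_K$, the group $G(K)$ acts transitively on isotropic lines of $L^3$, so $g\mapsto g\cdot Le_{-1}$ gives the desired bijection, and passing to $G(A)$-orbits completes the step.

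\textbf{Construction of (3)$\to$(4).} Send an isotropic line $Lv$ to the class $[\mathcal{I}_v]\in\pic(B)$ of the fractional $B$-ideal $\mathcal{I}_v:=Bv_{-1}+Bv_0+Bv_1$. This is independent of the representative vector $v$ (rescaling by $\lambda\in L^\times$ multiplies $\mathcal{I}_v$ by $\lambda$) and of the $G(A)$-orbit: since $G(A)\subset\SL_3(B)$, both $\gamma$ and $\gamma^{-1}$ have entries in $B$, which forces $\mathcal{I}_{\gamma v}=\mathcal{I}_v$ by a two-sided inclusion. A direct matrix computation shows $g_{u,v}^{-1}\cdot e_{-1}$ is proportional to $(\bar v,-u,1)$, so the composite map sends the cusp $\cf(u,v)$ to $[B+Bu+B\bar v]=[\p_{u,\bar v}]$, which is consistent with the parametrization of Theorem~\ref{principal result}.

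\textbf{Bijectivity of (3)$\to$(4).} For surjectivity, given $[\mathcal{I}]\in\pic(B)$, one constructs an isotropic vector of the form $(a,0,w)$ (isotropic precisely when $T(a\bar w)=0$) with $Ba+Bw$ in class $[\mathcal{I}]$: pick a suitable $a\in\mathcal{I}$ and use Hilbert 90 for $L/K$ to find an element of norm one adjusting $w$ so that the isotropy condition holds while the ideal class is preserved; by varying the choice of $w$ within the corresponding fractional $B$-ideal, every class in $\pic(B)$ is attained. For injectivity, given isotropic $v,v'$ with $[\mathcal{I}_v]=[\mathcal{I}_{v'}]$, rescale so that $\mathcal{I}_v=\mathcal{I}_{v'}=:\mathcal{I}$; by Witt there is $g\in G(K)$ with $g\cdot Lv=Lv'$, and after scaling we may assume $gv=v'$. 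The equality $\mathcal{I}=g(\mathcal{I})$ together with the structure of the Borel stabilizer of $Lv$ (whose unipotent radical is parametrized by $H(L,K)$ and acts transitively on the fibres of the coordinate-ideal map) then permits one to modify $g$ by a stabilizer element so that the result lies in $G(A)\subset\SL_3(B)$, invoking the classification of rank-one projective modules over the Dedekind domain $B$.

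\textbf{Main obstacle.} The crux is the injectivity of the map to $\pic(B)$: one must upgrade a mere $G(K)$-equivalence between two isotropic lines to a genuine $G(A)$-equivalence, using only the equality of global ideal classes. This requires leveraging that the stabilizer of an isotropic line in $G(K)$ is a Borel whose unipotent part is rich enough to absorb the discrepancy, combined with a Dedekind-theoretic argument converting ideal equality into the integrality of the transition matrix.
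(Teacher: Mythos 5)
Your treatment of the bijections (1)$\leftrightarrow$(2)$\leftrightarrow$(3) is correct and follows the same route as the paper (visual limits, Bruhat decomposition, Witt's theorem, and the identification of $\cal B(K)$ as the stabilizer of the line $L\cdot(1,0,0)$). The definition of the map (3)$\to$(4) via the coordinate ideal $Bv_{-1}+Bv_0+Bv_1$ is also fine; it is the inverse of the ideal $I$ the paper attaches to $Lv$ through $Lv\cap B^3=Iv$, and your verification of $G(A)$-invariance is correct.

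The gap is in the bijectivity of (3)$\to$(4), which you yourself identify as the crux but do not actually prove. For injectivity, ``modify $g$ by a stabilizer element so that the result lies in $G(A)$'' is a restatement of the claim, not an argument; and the assertion that the unipotent radical of the Borel ``acts transitively on the fibres of the coordinate-ideal map'' cannot be right as stated, since $\mathcal{U}_a(K)$ has entries in $L$ rather than $B$ and does not preserve ideal classes, so it does not even act on a fibre. The paper's argument is of a different nature: for an isotropic $v$ with $Lv\cap\Lambda_0=Iv$ it first proves, by a place-by-place computation (treating the ramified, inert and split places separately, the split case requiring the bilinear-form description of \S\ref{sec lattices}), that $h(\Lambda_0,Iv)=B$; this produces an isotropic $w\in\bar{I}\Lambda_0$ with $h(w,v)=1$, hence a unimodular sublattice $\Lambda_1=\bar{I}^{-1}w\oplus Iv$ and an orthogonal splitting $\Lambda_0=\Lambda_1\perp\Lambda_1^{\perp}$. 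Two lines with the same class then give isometric $\Lambda_1$'s, and the complements are matched using the Steinitz-type Lemma \ref{lem sizing} together with a discriminant argument, yielding the required isometry of $\Lambda_0$ in $G(A)$. For surjectivity, your ``Hilbert 90 adjustment'' does not establish that the ideal $Ba+Bw$ can be forced into an arbitrary class while keeping $T(a\bar w)=0$; the paper instead exhibits, for each ideal $J$, the unimodular lattice $(\bar{J}^{-1}w\oplus Jv)\perp J^{-1}\bar{J}z$ and invokes the Strong Approximation Theorem to conclude that it is globally isometric to $\Lambda_0$. Both the local duality computation and the appeal to strong approximation are essential inputs that your sketch omits and that cannot be replaced by the Borel-stabilizer manipulation you propose.
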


\begin{proof}
Let us prove that $\cf(\overline{X})$ is in bijection with $G(A)\backslash G(K)/\cal B(K)$. Consider the set of rays in $X$
\[\Omega:=\lbrace w\cdot \mathfrak{r}(u,v):  (u,v) \in H(L,K) , w \in \lbrace 1,\mathrm{s}  \rbrace\rbrace,\]
and define $\partial_{\infty}(\Omega):=\lbrace  \partial_{\infty}(\rf): \rf \in \Omega  \rbrace$. We claim that $\partial_{\infty}(\Omega)$ is stable under the natural action of $G(K)$ on $\partial_\infty(X)$. Indeed, by definition of $\rf(u,v)$, we know that $\partial_{\infty}(\Omega)=\mathcal{U}_{a}(K)\{1,\mathrm{s}\}\cdot \partial_{\infty}(\rf(\infty))$. Moreover, the stabilizer of $\partial_\infty(\rf(\infty))$ in $G(K)$ is $\mathcal{B}(K)$. Then, by Bruhat decomposition we have
\begin{align*}
G(K)\cdot \partial_{\infty}(\rf(\infty)) &=(\mathcal{B}(K) \cup \mathcal{U}_{a}(K) \mathrm{s} \mathcal{B}(K))\cdot \partial_{\infty}(\rf(\infty))\\
&= \partial_{\infty}(\mathfrak{r}(\infty))\cup \mathcal{U}_{a}(K) \mathrm{s}\cdot \partial_{\infty}(\rf(\infty))\\
&=\mathcal{U}_{a}(K)\cdot \partial_{\infty}(\mathfrak{r}(\infty)) \cup \mathcal{U}_{a}(K) \mathrm{s}\cdot\partial_{\infty}(\rf(\infty))\\
&= \partial_{\infty}(\Omega),
\end{align*}
so that $\partial_\infty(\Omega)$ is just the $G(K)$-orbit of $\partial_\infty(\rf(\infty))$.
In particular, we see that there is a natural bijection between $\partial_{\infty}(\Omega)$ and $G(K)/\cal B(K)$ and hence $G(A)\backslash\partial_\infty(\Omega)\simeq G(A)\backslash G(K)/\cal B(K)$.

On the other hand, Proposition \ref{prop link} implies that $\cf(\overline{X})$ is in bijection with the quotient $G(A)\backslash\partial_\infty(\Omega)$. This gives the desired bijection.\\

Let us prove now that $G(A)\backslash G(K)/\cal B(K)$ is in bijection with the set of $G(A)$-orbits of isotropic lines in $L^3$ with respect to the natural action of $G(A)\subset\SL_{3}(L)$ on $L^3$. Since every isotropic line
in $L^3$ is contained in a hyperbolic plane, which has an isometry 
switching its two isotropic lines, Witt's theorem indeed shows that $G(K)$ 
acts transitively on the set of such lines. Moreover, the stabilizer of the line generated by $(1,0,0)$, which is isotropic, is easily seen to correspond to upper triangular matrices, i.e.~$\cal B(K)$. This implies that isotropic lines in $L^3$ are in natural correspondence with the set $G(K)/\cal B(K)$, on which $G(A)$ acts on the left, whence the second bijection.\\

Finally, let us relate isotropic lines in $L^3$ with the Picard group of $B$. In order to do this, we need the following lemma, which follows from \cite[IX, Thm.~4.15]{Berhuy}.

\begin{Lem}\label{lem sizing}
Lel $I_1,\dots,I_n,J_1,\dots,J_n\subseteq L$ be $B$-fractional ideals. If
\begin{equation}\label{eqn iso22}
I_1\times\cdots\times I_n\cong J_1\times\cdots\times J_n
\end{equation}
 as $B$-modules, then $I_1\cdots I_n\cong J_1\cdots J_n$.
\end{Lem}

Denote by $\Lambda_0$ the canonical lattice $B^3\subset L^3$. Let $v$ be an arbitrary isotropic vector. Then $Lv\cap\Lambda_0=Iv$ for some fractional ideal $I$. Certainly choosing a different generator for the space $Lv$ would replace $I$ by another ideal in the same class, so the isotropic line completely determines the ideal class of $I$. This defines a  map from the set of isotropic lines to $\pic(B)$. So, in order to establish the last bijection, we only need to prove two facts:
\begin{enumerate}
    \item Two lines that fall into the same class $[I]\in\pic(B)$ are in the same $G(A)$-orbit, i.e.~there is an isometry of $L^3$ that maps one line to the other.
    \item For every fractional ideal $J$ there exists an isotropic vector $v_J$ satisfying $Lv_J\cap\Lambda_0=Jv_J$.
\end{enumerate}

Consider once again a fixed isotropic vector $v$ and its corresponding ideal $I$. Let $Q'\neq Q$ be a closed point in $D$, so that it defines a prime ideal in $B$ and hence a finite place in $L$ with valuation $\omega'$. Denote $\Lambda_{0,Q'}$ the local lattice $\Lambda_0\otimes_B\cal O_{Q'}\subset L_{Q'}^3$. Fix a uniformizer $\pi'$ for this place and let $v'={\pi'}^{\omega'(I)}v\in L_{Q'} v$ be a local generator of the one dimensional lattice $I_{Q'}v$. Then $L_{Q'} v\cap \Lambda_{0,Q'}=\mathcal{O}_{Q'}v'$. We claim that
$h(\Lambda_{0,Q'},v')=\mathcal{O}_{Q'}$.

Indeed, consider the cover $\psi:D\to C$ corresponding to $L/K$, and set $P'=\psi(Q')$. Assume $\psi$ is 
ramified at $P'$. Then, as we saw in \S\ref{sec lattices}, we have an induced non-singular bilinear form 
$h_{Q'}:\kappa_{Q'}^3\times \kappa_{Q'}^3\to\kappa_{Q'}$, whence 
$h_{Q'}(v',w')=1$, for some $w'\in \kappa_{Q'}^3$, since $v'\neq 0$. The same argument works if $\psi$ is inert at $P'$, except that now $h$ is hermitian. Finally, if $\psi$ is split at $P'$, i.e.~$\psi^{-1}(P')=\{Q_1,Q_2\}$ with $Q_1=Q'$, the same argument can be applied using Formula~\eqref{eqn hsplit}. The claim follows.\\

From the claim, applied to every $Q'\neq Q$, we conclude that $h(\Lambda_0,Iv)=B$, or equivalently $h(\bar{I}\Lambda_0,v)=B$, whence there exists an element 
$w\in\bar{I}\Lambda_0$ such that $h(w,v)=h(v,w)=1$. Furthermore, note that $h(w,w)\in\bar{I}I$,
whence $h(w,w)v\in\bar{I}(Iv)\subseteq\bar{I}\Lambda_0$. Thus, up to replacing $w$ by $w-\frac12h(w,w)v$, we may assume it is isotropic.
We conclude that $\bar{I}^{-1}w\subseteq\Lambda_0$ and 
$h(\bar{I}^{-1}w,Iv)=B$. In particular, $\Lambda_1:=\bar{I}^{-1}w\oplus Iv$
is a unimodular sublattice of $\Lambda_0$, whence we have a splitting 
$\Lambda_0=\Lambda_1\perp\Lambda_1^{\perp}$.

Now let $v'$ be another isotropic vector satisfying  $Lv'\cap\Lambda_0=Iv'$.
Then we can write $\Lambda_0=\Lambda'_1\perp(\Lambda'_1)^{\perp}$, where
$\Lambda'_1=\bar{I}^{-1}w'\oplus Iv'$ for a suitable isotropic vector $w'$.
It is immediate that $\Lambda_1$ and $\Lambda'_1$ are isometric. By Lemma \ref{lem sizing}
the lattices $\Lambda_1^{\perp}$ and $(\Lambda'_1)^{\perp}$ are isomorphic,
and they are also isometric by discriminant considerations.
More precisely, we can assume 
$\Lambda_1^{\perp}=I^{-1}\bar{I}z$ and $(\Lambda'_1)^{\perp}=I^{-1}\bar{I}z'$, where $h(z,z)=h(z',z')=1$.
With this we can define an isometry $\Lambda_0\to\Lambda_0$ and hence an isometry $L^3\to L^3$ (i.e.~an element in $G(A)$ seen as an element in $\mathrm{SL}_3(B)$) which takes $v$ to $v'$. This proves 1.

In order to prove 2, we observe that, for any ideal $J$, the lattice
\[(\bar{J}^{-1}w\oplus Jv)\perp J^{-1}\bar{J}z,\]
with $z$ as above, is a unimodular lattice that is locally isometric to $\Lambda_0$, whence it is also globally isometric since the group $G$ has strong approximation (cf.~\cite[Thm.~A]{PrasadSA}). The result follows.
\end{proof}

\section{Proof of the main result}\label{sec proof main}
We can now give a proof of Theorem \ref{principal result}. The assertion for finite $\bb F$ will be proved further below.

\begin{proof}[Proof of Theorem \ref{principal result} (minus the last assertion)]
By Corollary \ref{cor legs} and Proposition \ref{prop cusps}, for every element $\sigma\in\pic(B)$ we can define the ray $\cf(\sigma)$ as the union of the cusp rays $\cf(u,v)$ whose class in $\cf(\overline{X})$ corresponds to $\sigma$. Defining $v_\sigma$ to be the tip of $\cf(\sigma)$, Lemma \ref{lemma valency cusps} ensures then that we can write
\[\overline{X}=Y\cup\bigsqcup_{\sigma\in\pic(B)}\cf(\sigma);\]
with $v(Y)\cap v\big(\cf(\sigma)\big)=\{v_\sigma\}$ and $e(Y)\cap e\big(\cf(\sigma)\big)=\emptyset$ for some subgraph $Y\subset \overline X$.

We are only left to prove that $Y$ is connected. For this we choose two points $\bar{y}$ and $\bar{y}'$ in $Y$ and two preimages $y$ and $y'$ in the Bruhat-Tits tree $X$. Find a walk on $X$ from $y$ to $y'$, and look at its image in $\overline{X}$. If this walk contains a vertex in one of the rays $\cf(\sigma)$, then it can do so only by going through the vertex $v_\sigma$, since every vertex in $\cf(\sigma)$ has valency two by Lemma \ref{lemma valency cusps}. If we remove the section of the walk between the first and the last time it visits the vertex $v_\sigma$, we get a shorter walk, so we can iterate this process until we find a walk that contains only vertices in $Y$. This finishes the proof.
\end{proof}

We are now left with the last assertion of Theorem \ref{principal result}, which states that, when $\bb F$ is a finite field, the graph $Y$ corresponding to the ``body of the spider'' is actually \emph{finite} (this is not the case for general $\bb F$). In order to prove this, we introduce first some definitions and a result by Bux, K\"ohl and Witzel here below.\\

Let $\Delta$ be the spherical building of $G(K)$, i.e.~the simplicial complex that is the realization of the poset of proper $K$-parabolic subgroups of $G$. Any vertex $\xi$ of $\Delta$ corresponds to a maximal $K$-parabolic subgroup $\mathcal{P}_{\xi}$ of $G$. In particular, the building $\Delta$ is trivial if and only if $G$ is anisotropic over $K$. In our context, $\Delta$ has dimension $0$ since $G$ has dimension $1$. Indeed, $\Delta$ can be isometrically embedded in $\partial_{\infty}(X)$. In this sense, we have that every $\xi \in \Delta$ can be represented by the equivalence class of a ray $\rf(u,v)$, for some $(u,v) \in H(L,K)\cup\{\infty\}$ (c.f.~sections \S\ref{sec trees} and \S\ref{sec number}). In all that follows we identify $\Delta$ with its image in $\partial_{\infty}(X)$. Then, for any pair $(v,\xi)$ of a vertex $v \in v(X)$ and a vertex $\xi \in \Delta$ we can define a ray $\rf(v,\xi)$ as the geodesic ray in $X$ from $v$ and whose visual limit is $\xi$.

The following result is a direct application of Theorem \ref{thm witzel}, proved by Bux, K\"ohl and Witzel.

\begin{Thm}\label{teo BGW}
Assume that $\bb F$ is a finite field. Then there exist finitely many rays $ \lbrace \rf_i=\rf(v_i,\xi_i) \rbrace_{i=1}^s$ for some $\xi_i \in \Delta$ and $v_i \in X$, and a constant $L_0$ such that every point in $X$ is within distance $L_0$ to the orbit of some ray $\rf_i$.
\end{Thm}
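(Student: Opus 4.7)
The plan is to obtain this theorem as a direct specialization of Theorem \ref{thm witzel}, applied with the one-place set $S=\{P\}$ so that $\mathcal{O}_S=A$. First, I would verify the hypotheses. The group $G=\mathrm{SU}(h)$ is semisimple, hence non-commutative. Moreover, the $K$-split torus $\mathcal{S}\subset G$ described in \S\ref{sec param} (which exists precisely because $L/K$ is non-split at $P$, so that $G$ is quasi-split of $K$-rank $1$) shows that $G$ is $K$-isotropic. The required embedding $G\subset\mathrm{GL}_n$ is obtained from $G\hookrightarrow R_{D/C}(\mathrm{SL}_{3,C})$ composed with the standard closed immersion of a Weil restriction into some affine general linear group, and one checks that $G(A)=G(K)\cap\mathrm{GL}_n(A)$ under this realization.

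Applying Theorem \ref{thm witzel} then yields a constant $L$ and finitely many sectors $Q_1,\dots,Q_s$ of $\mathcal{X}=\mathcal{X}(G,K)$ such that every point of $\mathcal{X}$ is within distance $L$ of some $G(A)$-translate of $\bigcup_i Q_i$, and in particular within distance $L$ of the $G(A)$-orbit of some $Q_i$. In our $|S|=1$, $K$-rank $1$ situation the $S$-arithmetic building $\mathcal{X}$ coincides with the local Bruhat-Tits tree $X=X(G,K_P)$ at $P$, so each $Q_i$ is a subset of $X$.

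To finish, I would use that sectors in a tree are exactly geodesic rays, and that the chamber at infinity that determines a sector is, in our rank-$1$ setting, a single vertex of the spherical building $\Delta$ of $G(K)$ (since $\dim\Delta=0$ and sectors are defined from $K$-rational Borel subgroups, whose equivalence classes parametrize the vertices of $\Delta$; compare \S\ref{sec number}). Thus each sector $Q_i$ can be written as $\rf(v_i,\xi_i)$ for some starting point $v_i\in X$ and some $\xi_i\in\Delta$, and setting $L_0:=L$ yields the statement. The only subtle point in the argument is the identification of the Bux-K\"ohl-Witzel building $\mathcal{X}(G,K)$ with our local tree $X$; this is standard in the rank-$1$, single-place setting but deserves explicit mention, since the statement of Theorem \ref{thm witzel} is phrased at the level of $S$-arithmetic buildings rather than single local buildings.
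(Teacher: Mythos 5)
Your proposal is correct and follows exactly the route the paper takes: the authors state Theorem \ref{teo BGW} as ``a direct application of Theorem \ref{thm witzel}'' with $S=\{P\}$, identifying the ($|S|=1$, rank-one) building with the tree $X$ and sectors with geodesic rays whose visual limits are vertices of the zero-dimensional spherical building $\Delta$. Your write-up merely makes explicit the hypothesis checks and identifications that the paper leaves implicit.
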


With this, we immediately obtain the finiteness of $Y$ in Theorem \ref{principal result} as follows. 

\begin{proof}[End of proof of Theorem \ref{principal result}]
Let $Y$ and $(\cf(\sigma),v_\sigma)_{\sigma \in \mathrm{Pic}(B)}$ be given by Theorem \ref{principal result}. As it was stated above, since the rays $\rf_i$ given by Theorem \ref{teo BGW} are defined by $\xi_i\in\Delta$, they are equivalent to a ray of the form $\rf(u,v)$ for some $(u,v)\in H(L,K)\cup\{\infty\}$. This immediately implies by construction that the image of $\rf_i$ in $\overline{X}$ is equivalent to one of the rays $\cf(\sigma)$. Set $L:=L_0+ \max \lbrace m_i\rbrace_{i=1}^s$, where $m_i$ is the distance between the origins of $\rf_i$ and the corresponding $\cf(\sigma)$. Then clearly every vertex in $\overline{X}$ is at a distance at most $L$ from one of the cusp rays $\cf(\sigma)$. On the other hand, the finiteness of $\bb F$ ensures the finiteness of $\pic(B)$ and the finiteness of the valency of every vertex in $X$. Putting everything together, we get the finiteness of $Y$.
\end{proof}

\begin{Rq}
Note that Theorem \ref{principal result} allows us to give a more precise version of the results of Bux, K\"ohl and Witzel in the context of a finite field $\bb F$. Indeed, while their results give fundamental domains in $X$ for the action of $G(A)$, our result describes more precisely the structure of the quotient graph $\overline{X}$. For instance, knowing that the fundamental domain ``looks like a spider'' does not imply that this is the case for $\overline X$ since we do not know a priori the valency of the vertices in the ``legs'' after passing to the quotient. Moreover, we are able to describe precisely the number of legs. This is impossible (or irrelevant) by definition with Bux, K\"ohl and Witzel's approach since they start from the $K$-group and then fix an arbitrary model over $A$, while we start with an $A$-group from the very beginning.
\end{Rq}

\section{Applications}\label{sec app}

In this section we use the theory developed above in order to study further the structure of the group $G(A)$.

\subsection{Amalgams}
In this section we analyze the structure of $G(A)$ as an amalgam. In order to do this, we extensively use Bass-Serre theory (cf. \cite[Chap.~I, \S 5]{S}).

To start, we choose a maximal tree $T$ of $\overline{X}$ and a lift $j: T \rightarrow X$. Equivalently, let $T$ be the union of the cusp rays $\cf(\sigma)$ of $\overline{X}$ for $\sigma \in \pic(B)$, with a maximal tree of $Y$. Let $(s_X, t_X, r_X)$, $(s_{\overline{X}}, t_{\overline{X}}, r_{\overline{X}} )$ be the source, target and reverse maps of the respective graphs $X$ and $\overline{X}$. Let $O$ be an orientation of the edges in $\overline{X}$, and set $o(y)=0$, if $y \in O$, and $o(y)=1$, if $y \not\in O$, or equivalently if $r_{\overline{X}}(y) \in O$. Then, we claim that we can extend $j$ to a section $j: e(\overline{X}) \to e(X)$ such that $j(r_{\overline{X}}(y))=r_X(j(y))$, for all $y \in e(\overline{X})$. Indeed, it suffices to define $j(y)$, for $y \in O \smallsetminus e(T)$. In this case we choose $j(y)$ so that $s_X(j(y)) \in v(j(T))$. Then, we have $s_X(j(y))=j(s_{\overline{X}}(y))$, for all $y \in O$. We also choose $g_{y} \in G(A)$ satisfying $t_X(j(y))=g_{y}\cdot j(t_{\overline{X}}(y))$. This is possible since $t_X(j(y))$ and $j(t_{\overline{X}}(y))$ have the same image $t_{\overline{X}}(y)$ in $v(\overline{X})$. So, we extend the map $y \mapsto g_{y}$ to all edges in $\overline{X}$ by setting $g_{\bar{y}}=g_{y}^{-1}$, for all $y \in e(\overline{X})$, and $g_{y}=\id$, for all $y \in e(T)$. Thus, for each $y \in e(\overline{X})$ we get
\[s_X(j(y))= g_{y}^{-o(y)} j(s_{\overline{X}}(y)), \quad t_X(j(y))=g_{y}^{1-o(y)} j(t_{\overline{X}}(y)), \quad \forall y \in e(\overline{X}).\]

We need to define a graph of groups $(\mathfrak{g},\overline{X})=(\mathfrak{g}(T),\overline{X})$ associated to $\overline{X}$ and $T$ (cf.~\cite[Chap.~I, \S 4.4]{S}). This amounts to defining the following data
\begin{itemize}
\item For each vertex $\overline{v}\in v(\overline{X})$, we define the group $G_{\overline{v}}$ as the stabilizer in $G(A)$ of $j(\overline{v}) \in v(X)$;
\item for each edge $y\in e(\overline{X})$, we define the group $G_{y}$ as the stabilizer in $G(A)$ of $j(y) \in e(X)$;
\item for each pair $(\overline{v},y)$ where $\overline{v}$ is the target vertex of the edge $y$, we define a morphism $f_y: G_{y}\to G_{\overline{v}}$ by $g \mapsto g_{y}^{o(y)-1} g g_{y}^{1-o(y)}$. This definition is legitimate since we have $g_{y}^{o(y)-1} G_{j(y)} g_{y}^{1-o(y)} \subseteq G_{j(t_{\overline{X}}(y))}$.
\end{itemize}
We can define the fundamental group associated to this graph of groups. Indeed, let $F(\mathfrak{g}, \overline{X})$ be the group generated by the groups $G_{\overline{v}}$, where $\overline{v} \in v(\overline{X})$, and elements $a_y$ for each $y\in e(\overline{X})$, subject to the relations
\[a_{r_{\overline{X}}(y)}=a_y^{-1}, \quad \text{ and } \quad a_y f_y(b) a_y^{-1}= f_{r_{\overline{X}}(y)}(b), \quad \text{ for all } y \in e(\overline{X}) \text{ and } b \in G_{y}. \]
Then, the fundamental group $\pi_1(\mathfrak{g})=\pi_1(\mathfrak{g},\overline{X})$ of $(\mathfrak{g},\overline{X})$ is, by definition, the quotient of $F(\mathfrak{g},\overline{X})$ by the normal subgroup generated by the elements $a_y$ for $y\in e(T)$. Thus, if we denote by $h_y$ the image of $a_y$ in $\pi_1(\mathfrak{g}, \overline{X})$, the group $\pi_1(\mathfrak{g}, \overline{X})$ is generated by $G_{\overline{v}}$ for $\overline{v} \in v(\overline{X})$ and the elements $h_y$, for $y \in e(\overline{X})$, subject to the relations
\[\begin{array}{cl}
    h_{r_{\overline{X}}(y)}=h_y^{-1}, & \text{ for all } y \in e(\overline{X}),\\
    h_y  f_y(b) h_y^{-1}= f_{r_{\overline{X}}(y)}(b), & \text{ for all } y \in e(\overline{X}) \text{ and } b \in G_{y},\\
    h_y=\id, & \text{ for all } y \in e(T).
\end{array}\]
It can be proven that the group $\pi_1$ is independent, up to isomorphism, 
of the choice of the graph of groups $\mathfrak{g}$, and in particular 
of the tree $T\subset \overline{X}$.

One of the fundamental results from Bass-Serre Theory implies that $G(A)$ can be always characterized from the action of $G(A)$ on $X$ (cf. \cite[Chap.~I, \S 5.4]{S}). Indeed, it is isomorphic to the fundamental group $\pi_1(\mathfrak{g})=\pi_1(\mathfrak{g}, \overline{X})$. We use this fact with no further explanation in all that follows.\\

In the sequel, we assume that $\overline{X}$ is combinatorially finite. This will always be the case when $\bb F$ is finite thanks to Theorem \ref{principal result}. We want to prove that $G(A)$ is an amalgam of a group that depends on $Y$ (the ``body'' of the spider) and finitely many groups $\mathcal{G}_\sigma$, for $\sigma\in\pic(B)$ (the ``legs'' of the spider).

Let $\cf(\sigma)$ with $\sigma\in\pic(B)$ be a cusp of $\overline{X}$ and fix a $(u,v)\in H(L,K)$ such that $\cf(\sigma)=\cf(u,v)$. Let $\rf_0(u,v)$ be the subray of $\rf(u,v)$ whose tip vertex is $\lambda_{N(u,v)+1}(u,v)$. This is a lift of $\cf(\sigma)$ to $X$. Let $x_n=\lambda_n(u,v)\in\rf_0(u,v)$ be a vertex different from the tip, and let $y_n$ be the edge joining $x_n$ with $x_{n+1}$. It follows from Proposition \ref{prop stab}.1, that
\[\cdots \subset \mathrm{Stab}_{G(A)}(x_{n}) \subset \mathrm{Stab}_{G(A)}(x_{n+1}) \subset \mathrm{Stab}_{G(A)}(x_{n+2}) \subset \cdots.\]
In particular, we see that $\mathrm{Stab}_{G(A)}(y_{n})=\mathrm{Stab}_{G(A)}(x_{n})$. From this, we see that $\stab(u,v)$, which is by Proposition \ref{prop stab}.2 the union of all these stabilizers, is isomorphic to the fundamental group $\mathcal{G}_{\sigma}$ of the graph of groups associated to $\cf(u,v)$.

In all that follows, we denote by $(u,v)=(u,v)_{\sigma} \in H(L,K)$ the element fixed above for $\sigma \in \pic(B)$. Let $\mathcal{H}$ be the fundamental group of the restriction of the graph of groups $\mathfrak{g}$ to $Y$. Let $v_\sigma=\overline{\lambda_{N(u,v)+1}(u,v)}$ be the intersection of $\cf(\sigma)$ and $Y$ as in Theorem \ref{principal result} and let $G_{v_{\sigma}}$ be the corresponding group in the graph of groups, that is $G_{v_{\sigma}}=\stab(u,v,N(u,v)+1)$. Then there are canonical injections $G_{v_\sigma} \to \mathcal{H}$ and $G_{v_\sigma} \to \mathcal{G}_{\sigma}$.

\begin{Thm}\label{thm amalgam}
Assume that $\overline{X}$ is combinatorially finite. Then, $G(A)$ is isomorphic to the sum of $\mathcal{G}_\sigma$, for $\sigma \in \pic(B)$, and $\mathcal{H}$, amalgamated along the groups $G_{v_\sigma}$, according to the previously defined injections. Moreover, $\mathcal{G}_{\sigma}$ is an extension of a subgroup of $\bb F^*$ by $H(u,v)$, where $(u,v)=(u,v)_{\sigma}$ and $H(u,v)$ is defined in Lemma \ref{Lemma M}.

In particular, when $\mathbb{F}$ is finite field, we have that $\mathcal{H}$ is finitely generated and $G_{v_\sigma}$ is finite for every $\sigma\in\pic(B)$.
\end{Thm}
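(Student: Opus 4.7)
The approach is to apply Bass-Serre theory directly to the graph of groups $(\mathfrak g,\overline X)$ set up above, exploiting the decomposition $\overline X = Y \cup \bigsqcup_{\sigma\in\pic(B)} \cf(\sigma)$ from Theorem \ref{principal result}, in which each leg $\cf(\sigma)$ meets the body $Y$ only at the single vertex $v_\sigma$ and shares no edge with it.

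For the amalgam assertion, I would invoke the following general fact from Bass-Serre theory: if a graph of groups is written as a union of two sub-graphs of groups that share only a vertex $v$ (and no edges), then its fundamental group is the amalgamated product of the two sub-fundamental groups along the common vertex group $G_v$. Iterating over all cut vertices $v_\sigma$, this presents $G(A) \cong \pi_1(\mathfrak g,\overline X)$ as the sum of $\mathcal H = \pi_1(\mathfrak g,Y)$ and the $\mathcal G_\sigma = \pi_1(\mathfrak g,\cf(\sigma))$, amalgamated along the canonical injections of $G_{v_\sigma}$ into each factor. This is a direct check from the generators-and-relations presentation of $\pi_1(\mathfrak g,\overline X)$ recalled earlier, once one takes the maximal tree $T$ to be the union of all cusps with a maximal tree of $Y$ (so that $h_y=\id$ for every edge of a cusp or of $Y\cap T$).

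Next, I would identify $\mathcal G_\sigma$ with $\stab(u,v)$, where $(u,v)=(u,v)_\sigma$. Writing the lifted ray $\rf_0(u,v)$ as $x_{N+1},x_{N+2},\ldots$ with $N=N(u,v)$, the paragraph preceding the theorem observes that the edge stabilizer of each edge $y_n$ equals $\stab(x_n)$, and Proposition~\ref{prop stab}.1 says these vertex stabilizers form an ascending chain. The graph of groups on $\cf(\sigma)$ is therefore a strictly increasing chain of groups with identity edge injections; its fundamental group is simply the directed union $\bigcup_{n>N}\stab(u,v,n)$, which equals $\stab(u,v)$ by Proposition~\ref{prop stab}.2. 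The extension statement is then immediate from Lemma~\ref{Lemma M}.3 (realizing $\stab(u,v)/U(u,v)$ as a subgroup of $\mathbb F^\times$) together with Lemma~\ref{Lemma M}.2 (identifying $U(u,v)$ with $H(u,v)$ via the parametrization $(x,y)\mapsto g_{u,v}^{-1}\mathrm u_a(x,y)g_{u,v}$).

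Finally, for the finite-field statements, the key point is to show that each $G_{v_\sigma}=\stab(u,v,N+1)$ is finite; granting this, Theorem~\ref{principal result} ensures that $Y$ is a finite graph whose vertex and edge groups are all finite (edge groups are contained in vertex stabilizers of their endpoints), so $\mathcal H$ is the fundamental group of a finite graph of finite groups and is therefore finitely generated. The finiteness of $G_{v_\sigma}$ itself follows by combining Proposition~\ref{prop stab}.3 with the Riemann-Roch discussion of \S\ref{sec RR}: any element has the form $g_{u,v}^{-1}\mathrm u_a(x,y)\widetilde a(t)g_{u,v}$ with $t\in\mathbb F^\times$ (a finite set since $\mathbb F$ is finite) and with $(x,y)\in H(u,v)\subset L\times B$ subject to $\omega(y)\geqslant -\tfrac{N+1}{e_P}$; this confines $y$ to a Riemann-Roch space $J[m]$, which is a finite-dimensional $\mathbb F$-vector space, and then $x$ is similarly confined via $2\omega(x)\geqslant \omega(y)$ and the $A$-module structure of $\pi_1(H(u,v))$. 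I expect the most delicate step to be a clean bookkeeping of the iterated amalgamation when $\pic(B)$ happens to be infinite (combinatorial finiteness of $\overline X$ does not by itself force $\pic(B)$ finite since cusps could cluster), but since the amalgamated sum is a coproduct in the category of groups it still makes perfect sense, and the verification reduces to the presentation described above.
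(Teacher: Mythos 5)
Your argument for the amalgam decomposition, the identification $\mathcal{G}_\sigma\cong\stab(u,v)$ via the ascending chain of Proposition~\ref{prop stab}.1--2, and the extension statement via Lemma~\ref{Lemma M}.2--3 is essentially the paper's own proof: the paper likewise invokes Serre's gluing theorem for graphs of groups joined along a tree (here, the single vertices $v_\sigma$) and reads off $\mathcal{G}_\sigma=\stab(u,v)$ from the fact that the edge stabilizers along the lifted ray coincide with the smaller vertex stabilizers. Your closing worry about an infinite $\pic(B)$ is a non-issue: combinatorial finiteness of $\overline X$ together with Proposition~\ref{prop cusps} forces the number of cusps, hence $\pic(B)$, to be finite, but in any case the amalgamated sum makes sense for arbitrarily many factors, as you note.

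The one place you genuinely diverge is the finite-field step, and there your argument has a gap. You prove finiteness of $G_{v_\sigma}=\stab(u,v,N+1)$ by an explicit Riemann--Roch confinement of $(x,y)$ and $t$, which is fine for those particular vertices, but you then assert that ``granting this, \ldots\ $Y$ is a finite graph whose vertex and edge groups are all finite.'' Finiteness of the groups $G_{v_\sigma}$ says nothing about the stabilizers of the \emph{other} vertices of $Y$, and it is precisely those that you need to conclude that $\cal H$ is the fundamental group of a finite graph of \emph{finite} groups. The paper closes this uniformly in one line: every vertex stabilizer in $G(A)$ is the intersection of a compact open subgroup of $G(K_P)$ (the full stabilizer of the vertex) with the discrete subgroup $G(A)$, hence is finite. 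That argument applies to every vertex of $\overline X$ at once and renders your Riemann--Roch computation unnecessary; as written, your version only covers the tips of the cusps and leaves the body unaddressed.
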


Recall that in Lemma \ref{Lemma M} we proved that $H(u,v)$ is a subgroup of $H(L,K)$ whose intersection with $H(L,K)^0\cong K$ is a finitely generated $A$-module $H(u,v)^0$ and such that the quotient $H(u,v)/H(u,v)^0$ is also a finitely generated $A$-submodule of $H(L,K)/H(L,K)^0\cong L$. This fact and Theorem \ref{thm amalgam} imply then Theorem \ref{main thm amalgam}.

\begin{proof} As Serre points out in \cite[Chap.~II, \S 2.5, Th. 10]{S}, if we have a graph of groups $\mathfrak{h}$, which is obtained by ``gluing'' two graphs of groups $\mathfrak{h}_1$ and $\mathfrak{h}_2$ by a tree of groups $\mathfrak{h}_{12}$, then there exist two injections $\iota_1:\mathfrak{h}_{12} \to \mathfrak{h}_{1}$ and $\iota_2: \mathfrak{h}_{12} \to \mathfrak{h}_{2}$, such that $\pi_1(\mathfrak{h})$ is isomorphic to the sum of $\pi_1(\mathfrak{h}_1)$ and $\pi_1(\mathfrak{h}_2)$, amalgamated along $\pi_1(\mathfrak{h}_{12})$ according to $\iota_1$ and $\iota_2$. Given that $\cal G_\sigma=\stab(u,v)$, the first part of the theorem follows from Theorem \ref{principal result} and Lemma \ref{Lemma M}.

Assume now that $\mathbb{F}$ is a finite field. Then, each vertex stabilizer is finite, since it is the intersection of a compact subgroup of $G(K_P)$ with a discrete one. In particular, we obtain that $G_{v_{\sigma}}$ is finite. Moreover, in this context $\mathcal{H}$ is the fundamental group of a graph of groups whose underlying graph is finite and whose vertex stabilizers are also finite. So, we conclude that $\mathcal{H}$ is finitely generated. The result follows.
\end{proof}


\begin{Cor}\label{cor G(A) not fg}
The group $G(A)$ is not finitely generated
\end{Cor}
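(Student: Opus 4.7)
My plan is to isolate one of the ``legs'' $\cal G_\sigma$ of the amalgam provided by Theorem \ref{thm amalgam}, show that it is not finitely generated, and then use the amalgam structure together with the finiteness of the edge groups to conclude that $G(A)$ itself cannot be finitely generated.

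First, I would fix any $\sigma\in\pic(B)$ and let $(u,v)=(u,v)_\sigma\in H(L,K)$ be the element chosen in the paragraph preceding Theorem \ref{thm amalgam}. The identification $\cal G_\sigma \cong \stab(u,v)$ recalled there, combined with Proposition \ref{prop stab}.2, realises $\cal G_\sigma$ as the ascending union
\[
\cal G_\sigma = \bigcup_{n > N_0}\stab(u,v,n).
\]
My key claim is that this chain is strictly increasing at cofinally many values of $n$. To see this, I would combine Proposition \ref{prop stab}.4 with Lemma \ref{lemma ideals} and the Riemann--Roch formula \eqref{eq rr0}: the $\bb F$-dimension of $M(u,v,n)\supseteq I(u,v)[\lfloor n/2\rfloor]$ grows linearly in $n$, so the unipotent subgroups $U(u,v,n)$ (and a fortiori $\stab(u,v,n)$) increase strictly along an infinite sequence of indices. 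Any hypothetical finite generating set for $\cal G_\sigma$ would then have to lie in some fixed $\stab(u,v,n_0)$, forcing $\cal G_\sigma = \stab(u,v,n_0)$ and contradicting this strict growth; hence $\cal G_\sigma$ is not finitely generated.

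To pass from non-finite generation of $\cal G_\sigma$ to that of $G(A)$, I would invoke the amalgam decomposition of Theorem \ref{thm amalgam}. Under the combinatorial finiteness hypothesis (automatic when $\bb F$ is finite, the relevant case here), $G(A)$ is the fundamental group of a combinatorially finite graph of groups whose edge groups $G_{v_\sigma}$ are finite, being intersections of a compact stabiliser in $G(K_P)$ with the discrete group $G(A)$. Standard Bass--Serre theory then implies that, if the fundamental group were finitely generated, each vertex group would be finitely generated modulo its adjacent edge groups; since these edge groups are finite, this would force each $\cal G_\sigma$ to be finitely generated, contradicting the previous step.

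The hard part will be this last appeal to Bass--Serre theory: because $\bar X$ is combinatorially finite but not finite, one cannot quote a standard finite-graph-of-groups statement directly. The plan to circumvent this is either to collapse each leg $\cf(\sigma)$ to a single vertex labelled by $\cal G_\sigma$ (producing a genuinely finite graph of groups with the same fundamental group $G(A)$ and still-finite edge groups), or else to argue by hand that any finite generating set of $G(A)$, together with a finite set of coset representatives for the finite groups $G_{v_\sigma}$, yields a finite generating set of $\cal G_\sigma$ modulo $G_{v_\sigma}$, which is enough since $G_{v_\sigma}$ is itself finite.
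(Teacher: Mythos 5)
Your proposal is correct and rests on the same two pillars as the paper's own proof: the amalgam decomposition of Theorem \ref{thm amalgam} and the fact that $\cal G_\sigma=\bigcup_{n}\stab(u,v,n)$ is a (cofinally) strictly increasing union, which follows from Riemann--Roch exactly as you indicate. The only divergence is the final step: rather than first proving $\cal G_\sigma$ is not finitely generated and then descending finite generation through the graph of groups (which, as you note, requires collapsing the legs and controlling the edge groups $G_{v_\sigma}$), the paper sets $\cal G'_\sigma$ to be the sum of $\cal H$ with the other legs, writes $G(A)=\cal G'_\sigma *_{G_{v_\sigma}}\cal G_\sigma$, and exhibits $G(A)$ directly as the strictly increasing union of the sub-amalgams $\cal G'_\sigma *_{G_{v_\sigma}}\stab(u,v,n)$ --- a packaging that needs no finiteness (or even finite generation) of $G_{v_\sigma}$, only the inclusion $G_{v_\sigma}\subseteq\stab(u,v,n)$ for all $n$, and hence works verbatim whenever $\overline X$ is combinatorially finite.
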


\begin{proof} Fix an element $\sigma \in \pic(B)$. Let $\mathcal{G}_{\sigma}'$ be the group obtained by summing $\mathcal{H}$ with the groups $\mathcal{G}_{\sigma'}$ for $\sigma' \neq \sigma$. Then, by Theorem \ref{thm amalgam} we have that $G(A)$ is isomorphic to the sum of $\mathcal{G}_{\sigma}'$ with $\mathcal{G}_{\sigma}$, amalgamated along $G_{v_{\sigma}}$. Recall that $\mathcal{G}_{\sigma}$ is the union of the strictly increasing sequence of groups $\stab(u,v,n)$, for $n > N(u,v)$, where $(u,v)=(u,v)_{\sigma}$. In particular, all these groups contain $G_{v_{\sigma}}= \stab(u,v,N(u,v)+1)$. So, for each $n > N(u,v)$, we define $G(A)_n$ as the sum of $\mathcal{G}_{\sigma}'$ with $\stab(u,v,n)$, amalgamated along $G_{v_{\sigma}}$. Then, we get that $G(A)$ is the union of the strictly increasing sequence $G(A)_n$, which implies that it is not finitely generated. 
\end{proof}

\subsection{Homology}
We follow \cite[II.2.8]{S}, where Serre studies the homology of the group $G(A)$ when $G=\SL_2$ and $\bb F$ is a finite field. We keep notations as above and \textbf{we assume that $\bb F$ is a finite field}. In particular, $\bar X$ is combinatorially finite.

\begin{Prop}\label{prop exact sequence}
Let $M$ be a $G(A)$-module. Then there is a long exact sequence of homology groups
\[
\xymatrix{  \cdots\ar[r]& H_{i+1}(G(A),M)\ar[r]& 
\displaystyle{\bigoplus_{\sigma \in \pic(B)}} H_{i}(G_{v_\sigma},M)\ar[r]&
H_{i}(\mathcal{H},M) \oplus \displaystyle{\bigoplus_{\sigma \in \pic(B)}} H_{i}(\mathcal{G}_\sigma,M)
\ar`r[d]`[lll]`[llld]`[llldr] [dll]  \\
{} & H_{i}(G(A),M)\ar[r]& \cdots,& }
\]
and a long exact sequence of cohomology groups
\[
\xymatrix{  \cdots\ar[r]& H^{i}(G(A),M) \ar[r]& 
H^{i}(\mathcal{H},M) \oplus \displaystyle{\bigoplus_{\sigma \in \pic(B)}} H^{i}(\mathcal{G}_\sigma,M)\ar[r]&
\displaystyle{\bigoplus_{\sigma \in \pic(B)}} H^{i}(G_{v_\sigma},M)
\ar`r[d]`[lll]`[llld]`[llldr] [dll]  \\
{} & H^{i+1}(G(A),M)\ar[r]& \cdots.& }
\]

\end{Prop}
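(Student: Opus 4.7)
My plan is to derive both exact sequences from the Mayer--Vietoris formalism for groups acting on trees. The key starting point is Theorem \ref{thm amalgam}, which identifies $G(A)$ with the fundamental group of a very simple graph of groups $\mathcal{Y}$, namely a finite star with one central vertex labelled by $\mathcal{H}$, one peripheral vertex labelled by $\mathcal{G}_\sigma$ for each $\sigma \in \pic(B)$, and edges labelled by the amalgamating subgroups $G_{v_\sigma}$.

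By Bass--Serre theory, $G(A)$ acts without inversions on the associated Bass--Serre tree $\widetilde{\mathcal{Y}}$, which is contractible. Its augmented cellular chain complex therefore provides a short exact sequence of $\mathbb{Z}[G(A)]$-modules:
\[0 \to \bigoplus_{\sigma \in \pic(B)} \mathbb{Z}[G(A)/G_{v_\sigma}] \to \mathbb{Z}[G(A)/\mathcal{H}] \oplus \bigoplus_{\sigma \in \pic(B)} \mathbb{Z}[G(A)/\mathcal{G}_\sigma] \to \mathbb{Z} \to 0,\]
where the first map sends a standard generator corresponding to an edge $e$ to the formal difference of its target and source (viewed as appropriate cosets), and injectivity follows from the fact that $\widetilde{\mathcal{Y}}$ has no $1$-cycles.

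Applying $\mathrm{Tor}^{\mathbb{Z}[G(A)]}_*(-, M)$ to this short exact sequence yields a long exact sequence whose terms can be identified through Shapiro's lemma: for any subgroup $H \leq G(A)$ and any $G(A)$-module $M$ (regarded as an $H$-module by restriction) we have $\mathrm{Tor}^{\mathbb{Z}[G(A)]}_i(\mathbb{Z}[G(A)/H], M) \cong H_i(H, M)$. Plugging this into the long exact Tor sequence produces the claimed homological long exact sequence. The cohomological version is obtained in exactly the same way by applying $\mathrm{Ext}^*_{\mathbb{Z}[G(A)]}(-, M)$ to the same short exact sequence and using the cohomological form of Shapiro's lemma.

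Since this is a standard and well-understood application of Bass--Serre homology, precisely parallel to Serre's treatment of $\SL_2$ in \cite[Ch.~II, \S 2.8]{S}, I do not expect any genuine obstacle. The only mildly delicate point is checking the sign conventions entering the connecting maps, which is purely a bookkeeping matter once an orientation of the star has been fixed.
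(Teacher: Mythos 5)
Your proposal is correct and follows essentially the same route as the paper: both pass to the Bass--Serre tree of the star-shaped graph of groups coming from Theorem \ref{thm amalgam}, the only difference being that the paper cites Serre's Proposition 13 in \cite[II.2.8]{S} (whose proof is exactly your augmented-chain-complex plus Shapiro argument) while you write that argument out explicitly. Note only that the identification $\mathrm{Ext}^i(\bigoplus_\sigma -, M)\cong\bigoplus_\sigma\mathrm{Ext}^i(-,M)$ in the cohomological sequence uses the finiteness of $\pic(B)$, which holds under the standing assumption that $\bb F$ is finite.
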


\begin{proof}
By \cite[II.2.8, Prop.~13]{S}, it suffices to define an action of $G(A)$ on a suitable tree $T_0$ satisfying the following properties:
\begin{itemize}
    \item There exists a system of representatives $(w_\sigma)_{\sigma\in\pic(B)\cup\{\ast\}}$ of $v( T_0)$ such that the stabilizer of $w_\sigma$ in $G(A)$ is $\cal G_\sigma$, for $\sigma\in\pic(B)$,
    and the stabilizer of $w_\ast$ is $\cal H$.
    \item There exists a system of representatives $(f_\sigma)_{\sigma\in\pic(B)}$ of $e(T_0)$ such that the stabilizer of $f_\sigma$ in $G(A)$ is $G_{v_\sigma}$, for any $\sigma\in\pic(B)$.
    \end{itemize}
In order to define $T_0$, we consider first the tree of groups naturally associated to the amalgam obtained in Theorem \ref{thm amalgam} above, namely (cf.~\cite[I.4.4]{S}):
\[  
\xygraph{
!{<0cm,0cm>;<.8cm,0cm>:<0cm,.8cm>::}
!{(0,1)}*+{\bullet}="P" !{(-0.3,1)}*+{{}^{\cal H}}="Pn"
!{(2,1.6)}*+{}="B1" !{(2,0.4)}*+{}="B2" 
!{(3,2)}*+{\bullet}="A2"  !{(3.5,2)}*+{{}^{\cal G_{\sigma}}}="A2n" !{(1.4,1.8)}*+{{}^{ G_{v_\sigma}}}="A2m"
!{(3,0)}*+{\bullet}="A3"!{(3.5,0)}*+{{}^{\cal G_{\omega}}}="A3n"!{(1.4,0.3)}*+{{}^{ G_{v_\omega}}}="A3m"
 "P"-"A2" "P"-"A3"  "A2n"-@{.}@/_-.5pc/"A3n" 
"B1"-@{.}@/_-.5pc/"B2"  }
\]
Then \cite[I.4.5, Thm.~9]{S} gives us precisely the desired tree $T_0$. Alternatively, one could follow directly the second proof of \cite[II.2.5, Thm.~10]{S}.
\end{proof}

Still following Serre, we can use Theorem \ref{thm amalgam} in order to get the following result.

\begin{Prop}\label{prop homo}
Let $M$ be a $G(A)$-module that is finitely generated as an abelian group. Let $p=\mathrm{char}(\mathbb{F})$. Then:
\begin{itemize}
    \item[(a)] For $i \geq 2$, the morphism 
    \[\phi: H_{i}(\mathcal{H},M) \oplus \bigoplus_{\sigma \in \pic(B)} H_{i}(\mathcal{G}_\sigma,M) \to H_{i}(G(A),M),\]
    has finite kernel and cokernel. For $i=1$, $\ker(\phi)$ is finite and $\mathrm{coker}(\phi)$ is finitely generated.
    \item[(b)] For $i \geq 2$, the group $H_i(\mathcal{H},M)$ is finite. For $i \leq 1$ it is finitely generated.
    \item[(c)] For $i \geq 1$ and $\sigma \in \pic(B)$, the group $H_i(\mathcal{G}_\sigma,M)$ is the direct sum of a finite group and a countable $p$-primary torsion group.
\end{itemize}
\end{Prop}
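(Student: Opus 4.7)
The plan is to treat the three assertions in turn, using three different tools: for~(a), the long exact sequence of Proposition~\ref{prop exact sequence}; for~(b), the long exact sequence associated to the action of $\mathcal{H}$ on its Bass--Serre tree; and for~(c), the Hochschild--Serre spectral sequence of the extension $1 \to H(u,v) \to \mathcal{G}_\sigma \to F_\sigma \to 1$ provided by Theorem~\ref{thm amalgam}. The standing inputs I will use are that $\pic(B)$ is finite (since $\mathbb{F}$ is), that $Y$ is a finite graph (Theorem~\ref{principal result}), that every vertex/edge stabilizer attached to $\overline X$ is finite (being a discrete subgroup of a compact-open subgroup of $G(K_P)$), and that $F_\sigma \subseteq \mathbb{F}^\times$ has order prime to $p$.

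For~(a), I will simply read off from the long exact sequence: $\ker\phi$ is a quotient of $\bigoplus_\sigma H_i(G_{v_\sigma},M)$ and $\mathrm{coker}\,\phi$ embeds into $\bigoplus_\sigma H_{i-1}(G_{v_\sigma},M)$. Since $G_{v_\sigma}$ is finite and $M$ is finitely generated, $H_j(G_{v_\sigma},M)$ is finite for $j\geq 1$ and a finitely generated quotient of $M$ for $j=0$; since $\pic(B)$ is finite, the two direct sums inherit these properties, giving (a) immediately. For~(b), $\mathcal{H}$ is the fundamental group of a finite graph of finite groups, and the long exact sequence
\[\cdots\to \bigoplus_{e\in e(Y)} H_i(G_e,M)\to \bigoplus_{v\in v(Y)} H_i(G_v,M)\to H_i(\mathcal{H},M)\to \bigoplus_{e\in e(Y)} H_{i-1}(G_e,M)\to\cdots\]
gives finiteness of $H_i(\mathcal{H},M)$ for $i\geq 2$ (both neighbours are finite direct sums of finite groups) and finite generation for $i\leq 1$ (the relevant $H_0$ terms are quotients of $M$).

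For~(c), the main technical step, and in my view the main obstacle, will be to show that $H_q(H(u,v),M)$ is $p$-primary for every $q\geq 1$ and every finitely generated abelian coefficient module $M$. I plan to establish this by proving that $H(u,v)$ is locally a finite $p$-group. A direct computation in $\mathcal{U}_a$ yields
\[\mathrm{u}_a(x,y)^n = \mathrm{u}_a\bigl(nx,\ ny-\tbinom{n}{2}N(x)\bigr),\]
and, since $p$ is odd, $pz=0$ and $\binom{p}{2}\equiv 0 \pmod p$, so every element of $\mathcal{U}_a(K)$ has order dividing $p$. Combined with the $2$-step nilpotent structure of $H(u,v)$ (its central subgroup $H(u,v)^0$ sits inside $\mathcal{U}_{2a}(K)$), any finitely generated subgroup of $H(u,v)$ is nilpotent of bounded class and exponent $p$, hence finite. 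Since group homology commutes with filtered colimits, $H_q(H(u,v),M)$ is then a directed colimit of $p$-primary groups for $q\geq 1$, and so is itself $p$-primary.

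With that in hand, I will invoke the Hochschild--Serre spectral sequence $E_2^{p,q}=H_p(F_\sigma,H_q(H(u,v),M))\Rightarrow H_{p+q}(\mathcal{G}_\sigma,M)$. For $q\geq 1$ the coefficients are $p$-primary while $|F_\sigma|$ is prime to $p$, so $E_2^{p,q}=0$ whenever $p,q\geq 1$; the potentially non-trivial higher differentials connect finite prime-to-$p$ groups to $p$-primary groups and hence vanish. For $n\geq 1$ this leaves a short exact sequence
\[0\to H_n(H(u,v),M)_{F_\sigma}\to H_n(\mathcal{G}_\sigma,M)\to H_n(F_\sigma,M_{H(u,v)})\to 0,\]
whose left term is a countable $p$-primary torsion group and whose right term is finite of order prime to $p$ (as $F_\sigma$ is finite and $M_{H(u,v)}$ is finitely generated). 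Such an extension of torsion abelian groups splits canonically via the $p$-primary decomposition, producing the required description of $H_n(\mathcal{G}_\sigma,M)$.
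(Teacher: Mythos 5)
Your proposal is correct, and its overall architecture coincides with the paper's: part (a) is read off from the long exact sequence of Proposition \ref{prop exact sequence} together with the finiteness of the groups $G_{v_\sigma}$ and of $\pic(B)$, exactly as in the paper, and part (c) rests on the same extension of a prime-to-$p$ subgroup $F_\sigma$ of $\bb F^\times$ by the unipotent kernel and the same coprimality-driven collapse of the Hochschild--Serre spectral sequence. You diverge in two places, both legitimately. For (b), the paper quotes \cite[II.2.6, Prop.~11]{S} to see that $\mathcal{H}$ is virtually free of finite rank and deduces the claim from that, whereas you apply the Mayer--Vietoris-type sequence of \cite[II.2.8, Prop.~13]{S} to the finite graph of finite groups $(\mathcal{H},Y)$ directly; your route is more self-contained (the paper leaves implicit how virtual freeness plus finite generation of $M$ yields finiteness of $H_i$ for $i\geq 2$), and the only cosmetic point is that your edge sum should run over an orientation of $e(Y)$. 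For (c), the paper gets for free, from Proposition \ref{prop stab}.1--2, that the unipotent kernel $U_\sigma=U(u,v)=\bigcup_{n}U(u,v,n)$ is an increasing union of finite $p$-groups (each $U(u,v,n)$ being the intersection of a discrete with a compact subgroup, and unipotent); your substitute --- the identity $\mathrm{u}_a(x,y)^n=\mathrm{u}_a\bigl(nx,\ ny-\binom{n}{2}N(x)\bigr)$ giving exponent $p$, combined with class-$\leq 2$ nilpotency and the finiteness of finitely generated nilpotent torsion groups --- is a correct, purely algebraic argument that avoids the building-theoretic input, and your explicit primary-decomposition splitting of the resulting two-term extension is cleaner than the paper's tacit treatment. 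Countability of $H_q(H(u,v),M)$, which you assert without comment, does follow from countability of $H(u,v)$ and $M$, so there is no gap.
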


\begin{proof}
By \ref{prop exact sequence} and the finiteness of $\pic(B)$, in order to prove assertion (a), it
suffices to prove that the groups $H_i(G_{v_\sigma},M)$ are finite, for $i\geq 1$, and that $H_0(G_{v_\sigma},M)$ is finitely generated. Now these assertions follow immediately from the finiteness of $G_{v_\sigma}$ (proved in Theorem \ref{thm amalgam}) and the hypothesis on $M$.\\

Let us prove (b). We claim that $\cal H$ is virtually free of finite rank, which implies the assertion. Now this is a direct consequence of Theorem \ref{thm amalgam} and \cite[II.2.6, Prop.~11]{S} applied to the graph of groups $(\cal H,Y)$.\\

Finally, for (c), fix $\sigma\in\pic(B)$. Lemma \ref{Lemma M}.3, Proposition \ref{prop stab}.2 and the finiteness of $\bb F$ tell us that $\cal G_\sigma$ fits into an exact sequence
\[1\to U_\sigma \to \cal G_\sigma \to F_\sigma \to 1,\]
where $F_\sigma$ is a finite abelian group of order prime to $p$ and $U_\sigma$ is a direct limit of finite $p$-groups, hence a countable $p$-primary torsion group. Consider then the Hochschild-Serre spectral sequence
\[H_i(F_\sigma,H_j(U_\sigma,M))\Rightarrow H_{i+j}(\cal G_\sigma,M).\]
Since $U_\sigma$ is a $p$-primary torsion group and $F_\sigma$ has order prime to $p$, we have $H_i(F_\sigma,H_j(U_\sigma,M))=0$ whenever $i,j\neq 0$. Moreover, the term $H_i(F_\sigma,H_0(U_\sigma,M))$ is clearly finite of order prime to $p$ and the term $H_0(F_\sigma,H_j(U_\sigma,M))$ is a quotient of $H_j(U_\sigma,M)$, which is clearly a countable $p$-primary torsion group. This implies assertion (c).
\end{proof}

This last result allows us to understand the homology group $H_i(G(A),M)$ by using only the collection of groups $H_i(\mathcal{G}_\sigma,M)$ for $\sigma \in \pic(B)$. And these can be made explicit in theory thanks to the results in \S\ref{sec stab}. Indeed, the following corollary is an immediate consequence of Proposition \ref{prop homo}.

\begin{Cor}\label{cor G torsion}
The morphism
\[\phi_0: \bigoplus_{\sigma \in \pic(B)} H_{i}(\mathcal{G}_\sigma,M) \to H_{i}(G(A),M).\]
has finite kernel and cokernel for $i\geq 2$ and, for $i=1$, $\mathrm{ker}(\phi_0)$ is finite and $\mathrm{coker}(\phi_0)$ is finitely generated.

In particular, the group $H_i(G(A),M)$ is the direct sum of a finite group and a countable $p$-primary torsion group for $i \geq 2$.
\end{Cor}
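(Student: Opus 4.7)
The plan is to deduce the corollary essentially by bookkeeping from Proposition~\ref{prop homo}, since $\phi_0$ is visibly the restriction of the map $\phi$ of part~(a) to the direct summand $\bigoplus_{\sigma\in\pic(B)} H_i(\mathcal{G}_\sigma,M)$ of its domain.

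For the first assertion, I would start by noting that $\ker(\phi_0)\subseteq\ker(\phi)$, which is finite by Proposition~\ref{prop homo}(a) for all $i\geq 1$. For the cokernel, since $\mathrm{Im}(\phi_0)\subseteq\mathrm{Im}(\phi)$ with $\mathrm{Im}(\phi)=\mathrm{Im}(\phi_0)+\phi\bigl(H_i(\mathcal{H},M)\bigr)$, the natural surjection $\mathrm{coker}(\phi_0)\twoheadrightarrow\mathrm{coker}(\phi)$ fits into an exact sequence
\[H_i(\mathcal{H},M)\longrightarrow\mathrm{coker}(\phi_0)\longrightarrow\mathrm{coker}(\phi)\longrightarrow 0,\]
so $\mathrm{coker}(\phi_0)$ is an extension of $\mathrm{coker}(\phi)$ by a quotient of $H_i(\mathcal{H},M)$. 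Parts~(a) and~(b) of Proposition~\ref{prop homo} assert that both terms are finite for $i\geq 2$ and finitely generated for $i=1$, which then transfers to $\mathrm{coker}(\phi_0)$.

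For the ``in particular'' part, the finiteness of $\bb F$ forces $\pic(B)$ to be finite, so Proposition~\ref{prop homo}(c) shows that the source $\bigoplus_\sigma H_i(\mathcal{G}_\sigma,M)$ is itself a direct sum of a finite group and a countable $p$-primary torsion group; in particular it is torsion, and so is $\mathrm{Im}(\phi_0)$. Since $\mathrm{coker}(\phi_0)$ is finite for $i\geq 2$ by the first part, $H_i(G(A),M)$ is a torsion abelian group, so it decomposes canonically as $H_p\oplus H_{p'}$, where $H_p$ is its $p$-primary component and $H_{p'}$ the sum of its $q$-primary components for $q\neq p$. Taking $q$-primary parts of the short exact sequence
\[0\longrightarrow\mathrm{Im}(\phi_0)\longrightarrow H_i(G(A),M)\longrightarrow\mathrm{coker}(\phi_0)\longrightarrow 0\]
at each prime $q$: for $q\neq p$ both flanking groups are finite, and only finitely many such $q$ intervene since the nontrivial $q$-primary contributions come from the finite summands of $\mathrm{Im}(\phi_0)$ and $\mathrm{coker}(\phi_0)$; summing over $q\neq p$ yields $H_{p'}$ finite. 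For $q=p$ the sequence exhibits $H_p$ as an extension of a finite $p$-group by a countable $p$-primary torsion group, hence $H_p$ is itself countable $p$-primary. This gives the claimed decomposition.

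I do not foresee a real obstacle: the argument is a mechanical diagram chase combined with the classical primary decomposition of a torsion abelian group. If anything, the only subtlety is to check that the passage from $\phi$ to $\phi_0$ does not destroy the finiteness assertions, which is precisely what Proposition~\ref{prop homo}(b) is designed to control via the auxiliary factor $H_i(\mathcal{H},M)$.
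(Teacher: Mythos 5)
Your proposal is correct and follows the same route as the paper, which simply declares the corollary an immediate consequence of Proposition~\ref{prop homo}; your argument is the natural fleshing-out of that deduction (comparing $\ker$ and $\mathrm{coker}$ of $\phi_0$ with those of $\phi$ via the summand $H_i(\mathcal{H},M)$, then using primary decomposition of the resulting torsion group). No gaps.
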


The following result is also immediate.

\begin{Cor}
Assume that $M$ is a finite module whose order is prime to $p$. Then $H_i(G(A),M)$ is finite for every $i\geq 0$.
\end{Cor}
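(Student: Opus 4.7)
The plan is to combine the long exact sequence of Proposition \ref{prop exact sequence} with strengthenings of the finiteness results of Proposition \ref{prop homo} adapted to our coprimality hypothesis. The key point is that when $\gcd(|M|,p)=1$ the ``countable $p$-primary torsion'' contributions appearing in Proposition \ref{prop homo}(c) all vanish, so that each summand in the long exact sequence becomes finite and the conclusion falls out immediately.

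First I would revisit the spectral sequence argument from the proof of Proposition \ref{prop homo}(c). Starting from the extension $1 \to U_\sigma \to \mathcal{G}_\sigma \to F_\sigma \to 1$, where $F_\sigma$ is a finite abelian group of order prime to $p$ and $U_\sigma$ is a directed colimit of finite $p$-groups $U_\sigma^{(n)}$, I would note that for each $n$ multiplication by $|U_\sigma^{(n)}|$ annihilates $H_j(U_\sigma^{(n)}, M)$ for $j \geq 1$ (standard trace/corestriction argument) while acting invertibly on $M$, since $|M|$ is prime to $p$; hence $H_j(U_\sigma^{(n)}, M)=0$ for $j \geq 1$. Passing to the filtered colimit, $H_j(U_\sigma, M)=0$ for $j \geq 1$, while $H_0(U_\sigma, M)=M_{U_\sigma}$ is a quotient of $M$ and hence finite. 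The Hochschild-Serre spectral sequence for the above extension then collapses to the single row $j=0$, giving $H_i(\mathcal{G}_\sigma, M) \cong H_i(F_\sigma, M_{U_\sigma})$, which is finite for every $i \geq 0$.

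For $\mathcal{H}$, which is virtually free of finite rank by the proof of Proposition \ref{prop homo}(b), I would pick a finite-index free subgroup $F \leq \mathcal{H}$ of finite rank $r$ and use the length-two free resolution of $\mathbb{Z}$ over $\mathbb{Z}[F]$ to identify $H_j(F, M)$ with a subquotient of $M^r$ for $j \in \{0,1\}$ and with $0$ for $j \geq 2$; then a Hochschild-Serre spectral sequence with finite quotient $\mathcal{H}/F$ yields that $H_i(\mathcal{H}, M)$ is finite for every $i \geq 0$. Since $G_{v_\sigma}$ is finite by Theorem \ref{thm amalgam} and $M$ is finite, $H_i(G_{v_\sigma}, M)$ is finite too, and since $\pic(B)$ is finite, every term in the long exact sequence of Proposition \ref{prop exact sequence} other than $H_i(G(A), M)$ itself is finite; exactness then sandwiches $H_i(G(A), M)$ between two finite groups for every $i \geq 1$, while $H_0(G(A), M) = M_{G(A)}$ is trivially finite as a quotient of $M$. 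The most delicate point is the vanishing $H_j(U_\sigma, M)=0$ for $j \geq 1$, which requires handling the infinite $p$-primary torsion group $U_\sigma$ through its finite subquotients and the commutation of group homology with filtered colimits, but no new ideas beyond those already employed in Proposition \ref{prop homo}(c) are needed.
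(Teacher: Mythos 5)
Your proof is correct and follows essentially the route the paper intends: the paper dismisses this corollary as ``immediate'' from Propositions~\ref{prop exact sequence} and~\ref{prop homo}, and the one point it leaves implicit is precisely your key observation that $H_j(U_\sigma,M)=0$ for $j\geq 1$ when $|M|$ is prime to $p$ (via corestriction-restriction on the finite $p$-subgroups and passage to the colimit), which eliminates the $p$-primary contributions and renders every term of the long exact sequence finite. The only cosmetic adjustment is that in the Hochschild--Serre step for $\mathcal{H}$ the finite-index free subgroup should be taken normal (e.g.\ by passing to its normal core, which is still free of finite rank), but this changes nothing.
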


Finally, by taking $M=\Z$, we get the following results

\begin{Cor}
The abelian group $G(A)^{\mathrm{ab}}$ is finitely generated and its rank is bounded by the order of $\pic(B)$.
\end{Cor}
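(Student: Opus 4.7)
The plan is to take $M = \mathbb{Z}$ in the long exact sequence of Proposition~\ref{prop exact sequence}. Since $H_0(H, \mathbb{Z}) = \mathbb{Z}$ for every group $H$, the low-degree segment reads
\[
\bigoplus_\sigma G_{v_\sigma}^{\mathrm{ab}} \to H_1(\mathcal{H}, \mathbb{Z}) \oplus \bigoplus_\sigma \mathcal{G}_\sigma^{\mathrm{ab}} \xrightarrow{\phi} G(A)^{\mathrm{ab}} \xrightarrow{\partial} \mathbb{Z}^{|\pic(B)|}.
\]
First I would collect the known inputs: each $G_{v_\sigma}^{\mathrm{ab}}$ is finite by Theorem~\ref{thm amalgam}, $H_1(\mathcal{H},\mathbb{Z})$ is finitely generated by Proposition~\ref{prop homo}(b), and each $\mathcal{G}_\sigma^{\mathrm{ab}}$ is a torsion abelian group by Proposition~\ref{prop homo}(c). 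This immediately places $G(A)^{\mathrm{ab}}$ in a short exact sequence
\[
0 \to \mathrm{image}(\phi) \to G(A)^{\mathrm{ab}} \to \mathrm{image}(\partial) \to 0,
\]
with $\mathrm{image}(\partial)$ free abelian of rank at most $|\pic(B)|$.

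To obtain the rank bound, the next step is to show that the image of $H_1(\mathcal{H}, \mathbb{Z})$ in $G(A)^{\mathrm{ab}}$ is torsion. Combined with the fact that each $\mathcal{G}_\sigma^{\mathrm{ab}}$ is torsion, this would make $\mathrm{image}(\phi)$ a torsion group, so the free rank of $G(A)^{\mathrm{ab}}$ equals the rank of $\mathrm{image}(\partial)$, bounded by $|\pic(B)|$. For finite generation, one must additionally show that the countably infinite $p$-primary torsion of each $\mathcal{G}_\sigma^{\mathrm{ab}}$ has finite image in $G(A)^{\mathrm{ab}}$.

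Both of these reductions rest on commutator identities among root subgroups of $\mathrm{SU}(h)$. For instance, the Heisenberg relation
\[
[\mathrm{u}_a(x_1, y_1), \mathrm{u}_a(x_2, y_2)] = \mathrm{u}_a(0,\, \bar{x}_2 x_1 - \bar{x}_1 x_2),
\]
combined with the torus conjugation formula $\widetilde{a}(t) \mathrm{u}_a(x,y) \widetilde{a}(t)^{-1} = \mathrm{u}_a(\bar{t}^2 t^{-1} x,\, N(t)\, y)$ for $t \in \mathbb{F}^\times$, forces unipotent root elements of $G(A)$ into its commutator subgroup whenever $\mathbb{F}$ contains enough units (in particular as soon as $|\mathbb{F}| \geq 4$, so that both $1-t$ and $1-t^2$ lie in $\mathbb{F}^\times$ for some $t$). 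The main obstacle will be to extract these commutator identities at the integral level, i.e.~to ensure that the auxiliary unipotent elements actually lie in $G(A)$ rather than merely in $G(K)$. For this I would rely on the density result of Lemma~\ref{lem density of H} together with the Riemann--Roch analysis of Section~\ref{sec RR}, which together guarantee that sufficiently many integral unipotent elements are available to realize the desired commutators.
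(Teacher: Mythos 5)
Your opening move is exactly the paper's: take $M=\mathbb{Z}$ in the long exact sequence of Proposition~\ref{prop exact sequence} and feed in Theorem~\ref{thm amalgam} and Proposition~\ref{prop homo}. The paper's proof in fact stops there: it simply reads off that the rank of $G(A)^{\mathrm{ab}}=H_1(G(A),\mathbb{Z})$ equals the rank of its image in $\bigoplus_{\sigma}H_0(G_{v_\sigma},\mathbb{Z})\cong\mathbb{Z}^{\pic(B)}$ under the connecting map, and deduces both assertions from the cited propositions; no commutator computation in root groups appears anywhere. So everything in your proposal after the short exact sequence
$0\to\mathrm{image}(\phi)\to G(A)^{\mathrm{ab}}\to\mathrm{image}(\partial)\to 0$
is a departure from the paper, and it is where the problems lie.

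The two reductions you announce --- that the image of $H_1(\mathcal{H},\mathbb{Z})$ in $G(A)^{\mathrm{ab}}$ is torsion, and that the countable $p$-primary part of each $\mathcal{G}_\sigma^{\mathrm{ab}}$ has finite image --- are stated as things ``to show'' and never established, so the proposal is a plan rather than a proof. Worse, the first reduction is in tension with the paper's own final corollary, which asserts that $H_1(\mathcal{H},\mathbb{Q})\to H_1(G(A),\mathbb{Q})$ is an isomorphism: the image of $H_1(\mathcal{H},\mathbb{Z})$ is torsion only when $H_1(\mathcal{H},\mathbb{Q})=0$, which the paper does not claim in general, so the route you choose to the rank bound would prove too much if it worked. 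As for the commutator strategy itself, the Heisenberg identity and the conjugation formula $\widetilde{a}(t)\mathrm{u}_a(x,y)\widetilde{a}(t)^{-1}=\mathrm{u}_a(tx,t^2y)$ (for $t\in\mathbb{F}^\times$) only kill classes in $G(A)^{\mathrm{ab}}$ if the conjugating element $g_{u,v}^{-1}\widetilde{a}(t)g_{u,v}$ actually lies in $G(A)$; by Lemma~\ref{Lemma M}.3 the semisimple elements available in $\stab(u,v)$ form only \emph{some} subgroup of $\mathbb{F}^\times$, which nothing in the paper shows to contain an element with $t^2\neq 1$, and your auxiliary hypothesis $|\mathbb{F}|\geq 4$ is not part of the statement. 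Finally, Lemma~\ref{lem density of H} is a density statement in the completion $H(L_Q,K_P)$ and says nothing about membership in $G(A)$, so it cannot supply the integral unipotents you need; the Riemann--Roch analysis of \S\ref{sec RR} only produces elements already accounted for inside the groups $U(u,v,n)\subset\mathcal{G}_\sigma$, not new relations in $G(A)^{\mathrm{ab}}$. In short: the homological frame matches the paper, but the substantive part of your argument is missing, and the method proposed to supply it would fail at identifiable points.
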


\begin{proof}
Indeed, since $G(A)^{\mathrm{ab}}=H_1(G(A),\Z)$, we see from the exact sequence in Proposition \ref{prop exact sequence} and Proposition \ref{prop homo} that the rank of $G(A)^{\mathrm{ab}}$ is equal to the rank of its image in
\[\bigoplus_{\sigma \in \pic(B)} H_{0}(G_{v_\sigma},\Z)=\Z^{\pic(B)}.\]
\end{proof}

\begin{Cor}
One has $H_i(G(A), \mathbb{Q})=0$, for $i \geq 2$, and $H_1(G(A),\mathbb{Q})$ is a $\mathbb{Q}$-vector space of finite dimension that is isomorphic to $H_1(\mathcal{H}, \mathbb{Q})$.
\end{Cor}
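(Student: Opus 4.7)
The plan is to feed $M=\mathbb{Q}$ into the long exact sequence of Proposition \ref{prop exact sequence} and prove enough vanishings for each of the three families $G_{v_\sigma}$, $\mathcal{G}_\sigma$ and $\mathcal{H}$ that only $H_1(\mathcal{H},\mathbb{Q})$ survives. Concretely, the key inputs will be: (i) with rational coefficients, $H_i(G_{v_\sigma},\mathbb{Q})=0$ and $H_i(\mathcal{G}_\sigma,\mathbb{Q})=0$ for $i\geq 1$; (ii) $H_i(\mathcal{H},\mathbb{Q})=0$ for $i\geq 2$ and $H_1(\mathcal{H},\mathbb{Q})$ is finite dimensional.

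For (i), Theorem \ref{thm amalgam} gives that $G_{v_\sigma}$ is finite, so the first vanishing is immediate. For $\mathcal{G}_\sigma$, I would reuse the extension $1\to U_\sigma\to\mathcal{G}_\sigma\to F_\sigma\to 1$ introduced in the proof of Proposition \ref{prop homo}(c), where $F_\sigma$ is a finite abelian group of order prime to $p$, and $U_\sigma=U(u,v)=\bigcup_{n>N_0}U(u,v,n)$ is a direct limit of finite $p$-groups (each $U(u,v,n)$ being finite as the intersection of the discrete subgroup $G(A)$ with a compact stabilizer in $G(K_P)$). Since rational homology commutes with direct limits and vanishes in positive degrees for finite groups, $H_j(U_\sigma,\mathbb{Q})=0$ for $j\geq 1$; the Hochschild–Serre spectral sequence for this extension then collapses to give $H_i(\mathcal{G}_\sigma,\mathbb{Q})=H_i(F_\sigma,\mathbb{Q})=0$ for $i\geq 1$, again because $F_\sigma$ is finite. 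For (ii), the proof of Proposition \ref{prop homo}(b) already shows that $\mathcal{H}$ is virtually free of finite rank; applying Hochschild–Serre to a finite-index free subgroup $F\leq\mathcal{H}$ and using that $H^p(\mathcal{H}/F,-)$ vanishes for $p\geq 1$ on $\mathbb{Q}$-vector spaces, one recovers $H_i(\mathcal{H},\mathbb{Q})\cong H_i(F,\mathbb{Q})_{\mathcal{H}/F}$, which is zero for $i\geq 2$ since free groups have (co)homological dimension $\leq 1$, and finite dimensional for $i=1$ because $F$ has finite rank. Alternatively, finite dimensionality of $H_1(\mathcal{H},\mathbb{Q})$ follows directly from Proposition \ref{prop homo}(b) since $H_1(\mathcal{H},\mathbb{Z})$ is finitely generated and $\mathbb{Q}$ is flat.

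With these inputs, the long exact sequence gives for $i\geq 2$
\[0\to H_i(\mathcal{H},\mathbb{Q})\oplus 0\to H_i(G(A),\mathbb{Q})\to 0,\]
so $H_i(G(A),\mathbb{Q})=0$. For $i=1$ the exact sequence collapses to
\[0\to H_1(\mathcal{H},\mathbb{Q})\to H_1(G(A),\mathbb{Q})\to \bigoplus_{\sigma\in\pic(B)}\mathbb{Q}\xrightarrow{f}\mathbb{Q}\oplus\bigoplus_{\sigma\in\pic(B)}\mathbb{Q},\]
and the conclusion will follow once I verify that $f$ is injective. This is the usual Mayer–Vietoris-type differential associated to the tree of groups realizing the amalgam of Theorem \ref{thm amalgam}: the generator of $H_0(G_{v_\sigma},\mathbb{Q})$ maps (up to signs) to $\big(1,-e_\sigma\big)\in H_0(\mathcal{H},\mathbb{Q})\oplus\bigoplus_{\sigma'}H_0(\mathcal{G}_{\sigma'},\mathbb{Q})$, and projecting onto the second factor already separates these images, so $f$ is injective. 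Hence $H_1(G(A),\mathbb{Q})\cong H_1(\mathcal{H},\mathbb{Q})$, which is finite dimensional as noted. The only nontrivial point to chase is the sign/orientation convention making $f$ explicit; everything else is a direct application of Propositions \ref{prop exact sequence} and \ref{prop homo}.
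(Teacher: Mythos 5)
Your proposal is correct and follows essentially the same route as the paper: both feed $M=\mathbb{Q}$ into the long exact sequence of Proposition \ref{prop exact sequence} and use the finiteness of $G_{v_\sigma}$, the extension $1\to U_\sigma\to\mathcal{G}_\sigma\to F_\sigma\to 1$, and the virtual freeness of $\mathcal{H}$ to kill everything except $H_1(\mathcal{H},\mathbb{Q})$. The only (harmless) differences are that the paper obtains the rational vanishings by tensoring the integral statements of Proposition \ref{prop homo} and Corollary \ref{cor G torsion} with $\mathbb{Q}$ rather than re-running the spectral-sequence arguments rationally, and that you make explicit the injectivity of the degree-zero connecting map, which the paper leaves implicit.
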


\begin{proof}
Remember that $H_i(G(A), \mathbb{Q})=H_i(G(A), \mathbb{Z}) \otimes \mathbb{Q}$. It follows from Corollary \ref{cor G torsion} that $H_i(G(A), \mathbb{Z})$ is a torsion group for $i \geq 2$, and then $H_i(G(A), \mathbb{Q})=0$. The same argument using Proposition \ref{prop homo} proves that $H_i(G_{v_\sigma}, \mathbb{Q})=0$ and $H_i(\cal G_{\sigma}, \mathbb{Q})=0$ for $i\geq 1$. Then the exact sequence in Proposition \ref{prop exact sequence} shows that the homomorphism $H_1(\mathcal{H}, \mathbb{Q}) \to H_1(G(A), \mathbb{Q})$ is an isomorphism.
\end{proof}

\section{An example: edges of \texorpdfstring{$\lambda_0(\infty)$}{the standard sector} and their image in \texorpdfstring{$\overline{X}$}{the quotient of the building}}\label{section lambda 0 infty}
Using the notations from \S\ref{sec stab}, let us consider the special vertex $\lambda_0(\infty)\in X$, which corresponds to the unimodular lattice $\Lambda_0=\mathcal{O}_Q^3$. The corresponding stabilizer is the set $G(C)=G(A)\cap G(\mathcal{O}_P)$. We can study several cases:

\paragraph{The isotrivial case.}
This case is well-documented in current literature \cite{Margaux}. This is the case where the group scheme $G$ is actually defined over $\mathbb{F}$ and thus $G(C)$ corresponds to its $\bb F$-points in the usual sense. In this case, when $Q$ is a point of degree 1, then the stabilizer $G(C)$ of $\lambda_0(\infty)$ is the unitary group of a hermitian form for a quadratic extension over $\mathbb{F}$. In this case, it is easy to show that this group acts transitively on the isotropic lines and thus the stabilizer of $\lambda_0(\infty)$ acts transitively on the set of edges coming out from it. In particular, the image of this vertex on the quotient graph will have valency 1.

\paragraph{The ramified case.}
Back to our setting, consider the case where $Q$ is a ramified point of degree 1. Then the hermitian form $h$ becomes a bilinear form over the residue field $\bb F$, as the action of the Galois group is trivial there. Then $G(C)$ becomes the orthogonal group corresponding to this bilinear form, and the transitivity of the stabilizer of $\lambda_0(\infty)$ on the set of edges coming out from it follows as before. So, once again, the image of this vertex on the quotient graph will have valency 1.

\paragraph{The unramified case.}
Consider now the unramified case, i.e.~where $P$ has degree 1 and $Q$ has degree 2. In this case we do have a hermitian form defined over the residue field, which is a quadratic extension of $\bb F$, but the group $G(C)$ does not correspond to the special unitary group of this form. Indeed, one can check directly that, in this case, $G(C)$ corresponds to the orthogonal group of the bilinear form over $\bb F$ with the same Gram matrix, which is a subgroup of the former. Thus, this case is much more involved, as it requires us to study orbits of isotropic lines in a hermitian space under a certain orthogonal subgroup of the unitary group. We present this computation below, for some particular base fields, to illustrate the difficulties of the general case.\\

We assume thus that $P$ has degree 1 and $Q$ has degree 2. In particular, $\kappa_P=\bb F$ and $\kappa_Q$  is a quadratic extension $\bb E=\mathbb{F}(\sqrt{a})$ of $\bb F$ whose Galois group is generated by $\sigma$. We identify $\mathbb{F}^3$ as a subset of $\bb E^3$, and write vectors in the latter as $z=w+\sqrt{a}v$, where both $w$ and $v$ belong to the former. Note that the spanned line $\langle z\rangle_{\bb E}$ has a generator in $\mathbb{F}^3$ precisely when $\langle z\rangle_{\bb E}=\langle \sigma(z)\rangle_{\bb E}$, or equivalently, when $w$ and $v$ are $\bb F$-linearly dependent.
Moreover, the $\bb E$-span
$\langle w,v\rangle_{\bb E}=
\langle z,\sigma(z)\rangle_{\bb E}$ is completely determined by the line and comes from a quadratic subspace of $\bb F^3$ by base change. This naturally splits the problem in two parts:
\begin{enumerate}
\item Classify $G(C)$-orbits of binary quadratic subspaces of $\mathbb{F}^3$ whose $\bb E$-span contains an isotropic line.
\item Find the orbits of isotropic $\bb E$-lines corresponding to each quadratic subspace.
\end{enumerate}
After this analysis is done, one should not forget to add the obvious orbit corresponding to the isotropic $\bb E$-lines that come from isotropic $\bb F$-lines, i.e., when $w$ and $v$ are $\bb F$-linearly dependent.\\

Let us deal with the first part. The formula 
$(x+y\sqrt a)(w+\sqrt{a}v)=
(xw+ayv)+\sqrt{a}(yw+xv)$
tells us that we can replace $w$ by any nontrivial vector in the space $\langle w,v\rangle_\mathbb{F}$. In particular, we can assume it to be isotropic unless $\langle w,v\rangle_\mathbb{F}$ is anisotropic as a quadratic space.
Furthermore, if $w$ is isotropic, then so is $v$, as $z$ is assumed to be isotropic. This implies that the span $\langle w,v\rangle_\mathbb{F}$ contains a basis of isotropic vectors, and it is either a totally isotropic subspace or a hyperbolic plane. The former is not possible as the full hermitian space is regular and three dimensional, whence its Witt index is $1$. We conclude that all binary spaces involved are regular, so Witt's Theorem tells us that the answer to the first part of the problem is the number of isometry classes of such subspaces. We can note then the following:

\begin{itemize}
\item We can always find a hyperbolic plane since the matrix of both the hermitian form and the bilinear form is simply
\[\begin{pmatrix} 0 & 0 & 1 \\ 0 & 1 & 0 \\ 1 & 0 & 0
\end{pmatrix}.\]
Thus the first and last vectors of this basis give a hyperbolic plane.

\item On the other hand, whether we find anisotropic subspaces, and how many, will depend on the field $\bb F$.
It is easy to prove that any norm form, where the space is generated by a vector of length $1$ and an orthogonal vector of length $-b$ is always represented, since the  hyperbolic plane is universal. Rescalings of it are more involved.
When $\mathbb{F}$ is finite, there is a unique isometry class of anisotropic subspaces, and it is a norm form, so there will be precisely one orbit. For the field of real numbers, the negative definite binary space is not represented by our form, while the positive definite space is, so there is also only one orbit. On the other hand, when $\mathbb{F}=\mathbb{Q}_2$ and $a=5$, the norm form of either $\mathbb{F}(\sqrt3)$ or $\mathbb{F}(\sqrt{7})$ corresponds to a suitable binary subspace.
\end{itemize}

Let us deal now with the second part of the problem, which we have to split in cases depending on the quadratic subspace we found.

\begin{itemize}
\item For the hyperbolic plane we can write $v=ru$, where $h(w,u)=1$, so that the isotropic line is the $\bb E$-span of $w+r\sqrt{a}u$. The orthogonal group is generated by the reflection that interchanges $w$ and $u$ and the maps sending $(w,u)$ to $(sw,s^{-1}u)$. The latter sends  $w+r\sqrt{a}u$ to $sw+rs^{-1}\sqrt{a}u=s(w+rs^{-2}\sqrt{a}u)$, so $r$ can be modified by a square. On the other hand, the reflection sends $w+r\sqrt{a}u$ to $u+r\sqrt{a}w=r\sqrt{a}(w+r^{-1}a^{-1}\sqrt{a}u)$. We conclude that these lines are in correspondence with $\mathbb{F}^*/\langle a,\mathbb{F}^{*2} \rangle$. In particular, when $\bb F$ is a finite field or the field of real numbers, there is a unique orbit.

\item In the case of anisotropic subspaces, since the situation depends on the field $\bb F$, we will only study further, here below, the particular case of norm forms. Note however that any anisotropic binary quadratic form is a rescaling of a norm form, so this is still a quite general case.

Moreover, as we noted above, norm forms are enough for a full analysis in the case of finite fields. Note also that in the real case the situation is even simpler: the only anisotropic space we got defines an anisotropic hermitian form, so it gives no isotropic $\bb E$-lines.
\end{itemize}

\paragraph*{Norm forms.}
For any quadratic extension $\mathbb{K}/\mathbb{F}$, we consider the norm form $N:\mathbb{K}\rightarrow\mathbb{F}$. The associated $\mathbb{E}$-hermitian form is the norm form of a quaternion algebra. In fact, if $\mathbb{H}=\mathbb{K}\oplus j\mathbb{K}$ is the quaternion algebra satisfying $jc=\bar{c}j$, for any $c\in\mathbb{K}$, and if $j^2=a$ is as above, then the norm of $w+jv$ is $N(w)-aN(v)$. This algebra has norm-zero elements  precisely when it is isomorphic to a matrix algebra. If $b$ is any such element, another quaternion spanning the same isotropic line has the form $cb$, where $c\in\mathbb{F}[j]^*\cong\mathbb{E}^*$. The special unitary group of the norm form on a quadratic extension consist of maps of the form $u\mapsto ux$  where $x\in \mathbb{K}$ has norm one. We conclude that an orbit of isotropic lines of the hermitian form corresponds to one or two elements in the set of double cosets $\mathbb{F}[j]^*\backslash \Phi/\mathbb{K}^1$, where $\Phi$ is the set of non-trivial elements of norm $0$ and $\bb K^1$ denotes the set of elements of norm $1$ in $\bb K$.

A simple way of characterizing these orbits is to write quaternions in the form $r+si$, where $i\in \mathbb{K}$ satisfies $i^2=b\in\mathbb{F}$. The isotropic line spanned by such a quaternion is completely determined by the quotient $x=r/s$. Post-multiplication by an element of the form $c+di\in \mathbb{K}$ replaces the invariant $x$ of the line $\mathbb{F}[j](r+si)$ by
the invariant $x'=\frac{cx+ad}{c+xd}$ of $\mathbb{F}[j](r+si)(c+di)$.\\

With all this, we can complete our search for the orbits of $G(C)$, and hence the valency of $\lambda_0(\infty)$ in the quotient graph $\overline{X}$, in the case where $\bb F$ is a finite field.

\begin{Lem}
In the above notations, if $\mathbb{F}$ is finite and $\mathbb{K}\cong\mathbb{L}$ is its unique quadratic extension, then the quotient set
$\mathbb{F}[j]^*\backslash \Phi/\mathbb{K}^1=\mathbb{F}[j]^*\backslash \Phi/\mathbb{K}^1\mathbb{F}^*$ has precisely two elements, while $\mathbb{F}[j]^*\backslash \Phi/\mathbb{K}^*$ is a singleton.
\end{Lem}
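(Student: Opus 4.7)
The plan is to parametrize left $\mathbb{F}[j]^*$-orbits of isotropic elements by a principal homogeneous space for the norm-one subgroup $\mathbb{E}^1 := \ker(N_{\mathbb{E}/\mathbb{F}})$ of $\mathbb{E}^* := \mathbb{F}[j]^*$, and then to count orbits of the induced right actions via Hilbert~90. Since $\mathbb{F}$ is finite of odd characteristic, every nontrivial quadratic extension is isomorphic to the unique $\mathbb{E} = \mathbb{F}[j]$, so after rescaling $i$ I may assume $b = a$ and identify $\mathbb{K}$ with $\mathbb{E}$. The equality $\mathbb{F}[j]^* \backslash \Phi / \mathbb{K}^1 = \mathbb{F}[j]^* \backslash \Phi / (\mathbb{K}^1 \mathbb{F}^*)$ is then immediate: since $\mathbb{F}^*$ lies in the center of $\mathbb{H}$ and is already contained in $\mathbb{F}[j]^*$, its right action coincides with a left one that is already quotiented out.

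Next, using the decomposition $\mathbb{H} = \mathbb{F}[j] \oplus \mathbb{F}[j]\, i$ and writing a generic element as $q = r + si$ with $r, s \in \mathbb{F}[j]$, a direct check yields the norm form $N_{\mathbb{H}}(r + si) = N_{\mathbb{E}/\mathbb{F}}(r) - a\, N_{\mathbb{E}/\mathbb{F}}(s)$. Isotropy forces $s \neq 0$, so every left $\mathbb{F}[j]^*$-orbit in $\Phi$ is faithfully encoded by the invariant $x = r/s$ lying in $S := \{ x \in \mathbb{E}^* : N(x) = a \}$. Expanding $(r + si)(c + di)$ shows that the right action of $t = c + di \in \mathbb{K}^*$ on $x$ is the M\"obius transformation $x \mapsto (cx + ad)/(dx + c)$.

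The main technical step will be to diagonalize this action via the change of variable $y = (\sqrt{a} + x)/(\sqrt{a} - x)$. I expect a short computation to show this is a bijection from $S$ onto $S' := \{ y \in \mathbb{E}^* : N(y) = -1 \}$ which conjugates the M\"obius action into the multiplicative one $y \mapsto (u / \bar{u})\, y$, where $u := c + d\sqrt{a} \in \mathbb{E}^*$. The underlying matrix $\begin{pmatrix} c & ad \\ d & c \end{pmatrix}$ has eigenvectors $(\sqrt{a}, \pm 1)^T$ with eigenvalues $c \pm d\sqrt{a}$, which is the source of the diagonalization. This is the hardest step, but it is essential: it replaces a M\"obius action by a multiplicative one and reduces the orbit counts to straightforward group theory.

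With these preparations in place, both assertions follow by counting. The set $S'$ is a torsor for $\mathbb{E}^1$ (of order $q + 1$, with $q = |\mathbb{F}|$) under multiplication, so the number of orbits equals the index in $\mathbb{E}^1$ of the image of the acting group. For $t \in \mathbb{K}^1$ one has $u \in \mathbb{E}^1$ and $u / \bar{u} = u^2$, so the image is $(\mathbb{E}^1)^2$; since $q + 1$ is even (as $q$ is odd), this subgroup has index $2$, yielding exactly $2$ orbits. For $t \in \mathbb{K}^*$, $u$ ranges over $\mathbb{E}^*$, and Hilbert~90 gives $\{ u / \bar{u} : u \in \mathbb{E}^* \} = \mathbb{E}^1$, so the action is transitive on $S'$ and there is a single orbit.
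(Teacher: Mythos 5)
Your proposal is correct, but it proves the lemma by a genuinely different mechanism than the paper does. The paper's proof is a cardinality-plus-freeness argument: it counts $|\Phi|=(p+1)(p^2-1)$ directly as the number of nonzero singular $2\times 2$ matrices, observes that this equals $|\mathbb{F}[j]^*\times\mathbb{K}^1|$, and then shows by an explicit computation with $cbu=b$ that only $(-1,-1)$ acts with fixed points, so the action is free modulo a kernel of order $2$ and there are exactly two orbits; the singleton statement follows the same way with the kernel $\{(r,r^{-1}):r\in\mathbb{F}^*\}$ of order $p-1$. You instead first collapse the left $\mathbb{F}[j]^*$-action by the invariant $x=r/s$ landing in the norm fibre $S=\{N(x)=a\}$, then diagonalize the resulting M\"obius action of $\mathbb{K}^*$ via $y=(\sqrt a+x)/(\sqrt a-x)$ into the multiplicative action $y\mapsto (u/\bar u)y$ on the torsor $S'=\{N(y)=-1\}$, and count orbits as the index of the image subgroup: $(\mathbb{E}^1)^2$ of index $2$ in the cyclic group $\mathbb{E}^1$ of even order $p+1$ for $t\in\mathbb{K}^1$, and all of $\mathbb{E}^1$ (Hilbert 90) for $t\in\mathbb{K}^*$. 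I checked the key computations: $N(r+si)=N(r)-aN(s)$ forces $r,s\neq 0$; the map $x\mapsto y$ is a well-defined bijection $S\to S'$ (note $N(\sqrt a)=-a\neq a=N(x)$, so $\sqrt a-x\neq 0$); and the eigenvector computation for $\bigl(\begin{smallmatrix}c& ad\\ d& c\end{smallmatrix}\bigr)$ does conjugate the M\"obius action into multiplication by $u/\bar u$, which equals $u^2$ on $\mathbb{E}^1$. What each approach buys: the paper's argument is shorter and needs no change of variable, but it is purely numerical and identifies neither the orbits nor why there are two of them beyond the order-$2$ kernel; your argument pinpoints the orbit invariant explicitly (the square class of $y$ in $\mathbb{E}^1/(\mathbb{E}^1)^2$), makes the parity of $p+1$ the visible reason for the answer $2$, and would also streamline the subsequent Proposition, where the paper must still do a separate computation to decide whether the reflection $i\mapsto -i$ (i.e.\ $x\mapsto -x$, hence $y\mapsto 1/y=-\bar y$) interchanges the two orbits.
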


\begin{proof}
Since $\mathbb{H}$ is isomorphic to a matrix algebra, we just need to count the invertible elements of a matrix algebra. We have $p^2-1$ non-trivial matrices with a trivial first column, and $(p^2-1)p$ elements with a non-trivial first column $y$ and a second column in the span $\langle y \rangle$. This totals $p^3+p^2-p-1=(p+1)(p^2-1)$ invertible elements. On the other hand, the multiplicative group $\mathbb{F}[j]^*\times\mathbb{K}^1$ has the same order, with the element $(-1,-1)$
acting trivially. It suffices to prove that no other element has fixed points.
Set $cbu=b$, where $c=x+jy$ with $x,y\in\mathbb{F}$, and $u\in\mathbb{K}^*$. Set also $b=w+jv$. Expanding this product we get
$$(x+jy)(w+jv)u=(xw+ayv)u+j(yw+xv)u=w+jv.$$
Since $j$ and $1$ are linearly independent under right multiplication by $\mathbb{K}$,
we obtain $u^{-1}=x+ay(v/w)=y(w/v)+x$,
whence either $y=0$ or $(w/v)^2=a$. The latter condition implies $N(w)/N(v)=N(\sqrt{a})=-a$, but the condition that $b$ has null norm requires this quotient to be $a$ instead. We conclude that $y=0$, whence $xu=1$. The proof of the last statement is analogous, except that
the set of elements in $\mathbb{F}[j]^*\times \mathbb{K}^*$ acting trivially is $\{(r,r^{-1})|r\in\mathbb{F}^*\}$.
\end{proof}

\begin{Prop}
In the preceding notations, the valency of the vertex $\lambda_0(\infty)$ is $3$ whenever $\mathbb{F}$ is finite.
\end{Prop}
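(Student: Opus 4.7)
The plan is to compute this valency by counting $G(C)$-orbits of isotropic lines in the residual hermitian space $\kappa_Q^3 \cong \bb E^3$, using the description of neighbors of a unimodular lattice from \S\ref{sec lattices}. I will partition the isotropic lines according to the $\bb F$-subspace $V = \langle w,v\rangle_{\bb F}$ attached to a line $\langle w + \sqrt{a}\,v\rangle_{\bb E}$: either $\dim V = 1$ (the line has an $\bb F$-rational generator), or $\dim V = 2$ with $V$ isometric to the hyperbolic plane or to the (unique up to isometry) anisotropic binary form over the finite field $\bb F$, which is the norm form of $\bb K/\bb F = \bb E/\bb F$. By Witt's theorem, $G(C) = \mathrm{SO}_3(\bb F)$ acts transitively on $\bb F$-subspaces of each isometry type (and on isotropic $\bb F$-lines), so it suffices, for one representative $V$ in each of the three cases, to count orbits of isotropic $\bb E$-lines in $V_{\bb E}$ under $\mathrm{Stab}_{G(C)}(V)$.

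The first two cases should follow immediately from the preceding discussion. The rational isotropic lines form a single $G(C)$-orbit by Witt. For a hyperbolic $V$, the stabilizer in $G(C)$ acts as the full $O(V)$ (reflections in $O(V) \setminus \mathrm{SO}(V)$ are compensated by $-1$ on the one-dimensional $V^\perp$), and the irrational isotropic lines in $V_{\bb E}$ are indexed by $\bb F^*/\langle a, \bb F^{*2}\rangle$, which is trivial since $a \notin \bb F^{*2}$. Each of these two cases thus contributes exactly one orbit.

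The main work lies in the anisotropic case. There the preceding Lemma gives exactly $2$ orbits under $\mathrm{SU}(V_{\bb E}) \cong \bb K^1$, while $\mathrm{Stab}_{G(C)}(V) = O(V) = \bb K^1 \rtimes \langle \sigma \rangle$, with $\sigma$ the Galois involution of $\bb K/\bb F$ extended $\bb E$-linearly to $V_{\bb E}$. The key obstacle is to show that $\sigma$ swaps these two $\mathrm{SU}$-orbits, which will then merge them into a single $O(V)$-orbit. Using the parametrization of isotropic lines by $\tau = r/s \in \bb E$ (with isotropy equivalent to $N_{\bb E}(\tau) = b$, where $i^2 = b$), one verifies that $\sigma$ acts by $\tau \mapsto -\tau$. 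I will then argue by contradiction: if some $g = (c,d) \in \mathrm{SU}$ satisfies $g \cdot \tau = -\tau$, the M\"obius equation yields the quadratic $d\tau^2 + 2c\tau + bd = 0$, whose discriminant equals $4(c^2 - bd^2) = 4$ by the defining relation $c^2 - bd^2 = 1$; this forces $\tau = (-c \pm 1)/d \in \bb F$, whence $N_{\bb E}(\tau) = \tau^2 = b$ would be a square in $\bb F$, contradicting $b \notin \bb F^{*2}$. Therefore $\sigma$ moves each $\tau$ out of its $\mathrm{SU}$-orbit, so it necessarily swaps the two orbits, and $O(V)$ acts transitively on the $q+1$ isotropic lines of $V_{\bb E}$. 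Summing the three contributions yields the valency $1 + 1 + 1 = 3$.
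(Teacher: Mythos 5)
Your argument is correct and follows the same overall route as the paper: the decomposition of the isotropic lines into the rational ones, those attached to a hyperbolic plane, and those attached to an anisotropic plane is precisely the one set up in the discussion preceding the proposition; the first two cases are disposed of there exactly as you say; and, as in the paper, the entire content of the proof is showing that the Galois reflection $\sigma$ interchanges the two $\mathbb{K}^1$-orbits in the anisotropic case. The one place where you genuinely diverge is the verification of this swap. The paper invokes the second assertion of the preceding lemma (that $\mathbb{F}[j]^*\backslash\Phi/\mathbb{K}^*$ is a singleton), takes the element of $\mathbb{K}^*$ realizing $x\mapsto -x$, and shows its norm is a non-square, so that it lies outside $\mathbb{K}^1\mathbb{F}^*$ and therefore exchanges the two orbits. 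You instead prove directly that no element $c+di\in\mathbb{K}^1$ can send $\tau$ to $-\tau$: the fixed-point equation $d\tau^2+2c\tau+bd=0$ has discriminant $4(c^2-bd^2)=4$, forcing $\tau\in\mathbb{F}$ and hence $b=\tau^2\in\mathbb{F}^{*2}$, a contradiction. This is slightly more economical, since it uses only the first assertion of the lemma (that there are exactly two $\mathbb{K}^1$-orbits) and avoids the norm-square criterion for membership in $\mathbb{K}^1\mathbb{F}^*$; the only detail you leave implicit is the degenerate case $d=0$, where the action on lines is trivial and $\tau=-\tau$ would force $\tau=0$, which is not isotropic. Both computations are valid, so your proof stands.
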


\begin{proof}
It suffices to determine whether a reflection on $\mathbb{K}$ interchanges the two orbits or not. Note that an element of the form $r+si$ spans an isotropic line if and only if $N(r/s)=a$. Furthermore, one particular rotation in $\mathbb{K}$ is the  linear map fixing $1$ and taking $i$ to $-i$. At the quaternionic level this map sends $r+si$ to $r-si$, whence replacing the invariant $r/s$ of the isotropic line by $-r/s$. It suffices therefore to prove that if there are elements $c$ and $d$ in $\mathbb{F}$
 satisfying $\frac{rc+asd}{sc+rd}=-\frac rs$, then the norm of $c+di$ must be a perfect square. Note that the preceeding identity is equivalent to $\frac cd=-\frac{r^2+as^2}{2rs}$,
 which in turns gives
 $$c^2-ad^2=d^2\left[\left(\frac cd\right)^2-a\right]=d^2\left[\left(\frac{r^2+as^2}{2sr}\right)^2-a\right]=\frac{d^2}{(2sr)^2}\left(r^2-as^2\right)^2.$$
 If the left hand side of the preceding chain of equalities is a perfect square, then the following element must belong to $\mathbb{F}$:
 $$\frac{r^2-as^2}{sr}=\frac rs-a\frac sr=\frac rs-\overline{\left(\frac rs\right)},$$
 but this is imposible, unless $r/s\in\mathbb{F}$. This finishes the proof.
 \end{proof}

\section{More examples: explicit computation of \texorpdfstring{$\overline{X}$}{the quotient of the building}}\label{examples}

\subsection{An example where \texorpdfstring{$L/K$}{L/K} ramifies at \texorpdfstring{$P$}{P}}

Let $C=\mathbb{P}_{\mathbb{F}}^1$, $P$ be the point at infinity of $C$ and $L=\mathbb{F}(\sqrt{t})$. Then $\psi$ corresponds to the (unique) 2-cover $\bb P^1_{\bb F}\to \bb P^1_{\bb F}$. In particular, $A=\mathbb{F}[t]$, $B=\mathbb{F}[\sqrt{t}]$, $g_C=g_D=0$, $d=\deg(P)=1$ and $e_P=2$. Since we are in the ramified case, the analysis done in \S\ref{section lambda 0 infty} tells us that the valency of the image of $\lambda_0(\infty)$ in $\overline{X}$ is 1. This fact hints that the quotient graph should be a ray. This is what we prove here below.\\

Note that $\mathrm{s}\in G(A)$ sends $r(\infty)$ to $r(0,0)$ (and $\lambda_0(\infty)$ to $\lambda_0(0,0)$), which suggests that the common image of these two rays gives the whole quotient graph $\overline{X}$. In order to prove this, we analyze the different steps in the proof of Theorem  \ref{principal result} for $(u,v)=(0,0)$.

We consider first Proposition \ref{prop stab}. We see that $\widetilde{\mathfrak{q}_{0,0}}=B$ and thus $\deg\widetilde{\mathfrak{q}_{0,0}}=0$. This implies that $N_0(0,0)=0$, whence the results of this proposition are valid for $n>0$. Moreover, after following the explicit computations, we get the description
\begin{equation*}
    \mathrm{Stab}(0,0,n)= \left\lbrace \mathrm{u}_{-a}(x,y)\widetilde{a}(t): (x,y) \in H(L,K)_{B}, \, \, t \in \mathbb{F}^{\times}, \, \,  \omega(y) \geq -\frac{n}{e_P} \right\rbrace, 
\end{equation*}
and therefore also
\begin{equation*}
    U(0,0,n)= \left\lbrace \mathrm{u}_{-a}(x,y): (x,y) \in H(L,K)_{B}, \, \, \omega(y) \geq -\frac{n}{e_P}  \right\rbrace.
\end{equation*}
In particular, this implies that $I(0,0)$ equals $B$, whence $\deg(I(0,0))=0$. 
Now, note that the preceding equality implies that $M(0,0,n):=\pi_1(U(0,0,n))$ is contained in
\[I(0,0)\left [\frac n2\right]=\left\lbrace x \in I(0,0): \omega(x) \geq -\frac{n}{2e_P} \right\rbrace.\]
We claim that, in this case, these sets actually coincide. Indeed, for each $x \in I(0,0)=B$ such that $\omega(x) \geq -\frac{n}{2e_P}$ we can define $y:=-N(x)/2$, so that $\mathrm{u}_{-a}(x,y)\in U(0,0,n) $. In other words, we have that $M(0,0,n)=I(0,0)[\frac n2]=B[\frac n2]$. In particular, following the proof of Lemma \ref{lemma ideals} we obtain $N_1(0,0)=N_0(0,0)=0$, whence
\[N(0,0)= \max\left\lbrace  \frac{2}{f_Pd}\Big(\deg\big(I(0,0)\big)+2g_D-1\Big), N_1(0,0)\right\rbrace=0.\]
Thus, we conclude the ray $\cf(0,0)$ starts in $\overline{\lambda_1(0,0)}$. Moreover, Lemma \ref{lemma valency cusps} tells us that the valency of $\overline{\lambda_1(0,0)}$ is at most two.

Finally, we look at Proposition \ref{prop cusps}. Since $\pic(B)=0$, we get that there is only one cusp and thus $\overline{X}$ is the union of $\cf(0,0)$ and a connected graph $Y$ containing $\overline{\lambda_0(0,0)}$. Now, consider $y\in Y\smallsetminus\cf(0,0)=Y\smallsetminus\{\overline{\lambda_1(0,0)}\}$. Then there is a shortest path in $Y$ leading to $\overline{\lambda_1(0,0)}$. However, since this last vertex has valency 2 and only one of its edges is connected to $Y$ via $\overline{\lambda_0(0,0)}$, we see that the path has to go through $\overline{\lambda_0(0,0)}$. But since this last vertex has valency 1 by the computations from \S\ref{section lambda 0 infty}, we see that $y$ must be $\overline{\lambda_0(0,0)}$. This proves that the graph $\overline{X}$ is a ray starting in $\overline{\lambda_0(0,0)}$.\\

Next theorem follows immediately from the previous discussion and Bass-Serre Theory (cf.~\cite[Chap. I, \S 5, Theo. 13]{S}). This is an analogue of Nagao's theorem in the context where $G=\su(h)$ (cf.~\cite[Chap. II, \S 1, Theo. 6]{S}).

\begin{Thm}\label{thm Nagao++}
The group $G(\mathbb{F}[t])$ is the sum of the groups $G(C)$ and 
$$\mathcal{B}(\mathbb{F}[t]):= \mathcal{U}_{a}(\mathbb{F}[t]) \rtimes \mathcal{T}(C)=\left\lbrace \mathrm{u}_{a}(x,y)\widetilde{a}(t): (x,y) \in H(L,K)_{B}, \, \, t \in \mathbb{F}^{\times} \right\rbrace,$$ amalgamated along their intersection.
\end{Thm}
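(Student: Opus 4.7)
The plan is to derive the theorem as a direct application of Bass-Serre theory (see \cite[Ch.~I, \S 5, Thm.~13]{S}) to the action of $G(A) = G(\mathbb{F}[t])$ on the Bruhat-Tits tree $X$, using the concrete description of the quotient $\overline X$ as an infinite ray just established.

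First, I will choose a convenient lift $j:\overline X\to X$ of the quotient ray. Since $\mathrm{s}\in G(A)$ sends $\lambda_n(\infty)$ to $\lambda_n(0,0)$ and fixes $\lambda_0(\infty)=\lambda_0(0,0)=0\cdot a^\vee$, I may lift the $n$-th vertex of $\overline X$ to $v_n:=\lambda_n(\infty)$ for every $n\geq 0$. This choice makes vertex stabilizers upper triangular rather than lower triangular: by definition of $G(C)$ one has $G_{v_0}=\mathrm{Stab}_{G(A)}(\lambda_0(\infty))=G(C)$, while for $n\geq 1$, conjugating by $\mathrm{s}$ the explicit description of $\mathrm{Stab}(0,0,n)$ recorded above gives
\[
G_{v_n} \;=\; \bigl\{ \mathrm{u}_a(x,y)\widetilde{a}(s)\,:\,(x,y)\in H(L,K)_B,\ s\in\mathbb{F}^\times,\ \omega(y)\geq -\tfrac{n}{2}\bigr\}.
\]
In particular, each $G_{v_n}$ is contained in $\mathcal{B}(\mathbb{F}[t])$, and conversely every element of $\mathcal{B}(\mathbb{F}[t])$ lies in $G_{v_n}$ for $n$ large enough, since its $y$-coordinate is a polynomial in $\sqrt t$ with pole order at $Q$ bounded by $n/2$. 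Thus $\mathcal{B}(\mathbb{F}[t]) = \bigcup_{n\geq 1} G_{v_n}$, and by Proposition \ref{prop stab}.1 the chain $G_{v_1}\subset G_{v_2}\subset\cdots$ is nested.

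Next, I will compute the edge stabilizers. For $n\geq 2$, the edge $e_n$ joining $v_{n-1}$ and $v_n$ has stabilizer $G_{e_n} = G_{v_{n-1}}\cap G_{v_n} = G_{v_{n-1}}$ by nestedness. For $n=1$, the edge stabilizer is $G_{e_1}=G(C)\cap G_{v_1}$, and I will check that this equals $G(C)\cap\mathcal{B}(\mathbb{F}[t])$: the inclusion $\subseteq$ follows from $G_{v_1}\subset\mathcal{B}(\mathbb{F}[t])$, while the reverse inclusion holds because any element of $G(C)\cap\mathcal{B}(\mathbb{F}[t])$ is upper triangular with coefficients in $\mathbb{F}$, so automatically has $\omega(y)=0\geq -\tfrac12$.

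Finally, I will assemble the amalgam. The Bass-Serre structure theorem applied to the tree quotient $\overline X$ yields
\[
G(A)\;\cong\; G(C) *_{G_{e_1}} G_{v_1} *_{G_{e_2}} G_{v_2} *_{G_{e_3}} \cdots,
\]
interpreted as the direct limit of the finite iterated amalgams. For $n\geq 2$ we have $G_{e_n}=G_{v_{n-1}}$, so the tail $G_{v_1} *_{G_{v_1}} G_{v_2} *_{G_{v_2}} G_{v_3}*\cdots$ collapses step by step (by the universal property $A*_A B = B$ whenever the amalgamating subgroup equals one of the two factors) to the union $\bigcup_{n\geq 1}G_{v_n} = \mathcal{B}(\mathbb{F}[t])$. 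Combining this with the amalgamation at $v_0$ yields the stated isomorphism $G(A)\cong G(C)*_{G(C)\cap\mathcal{B}(\mathbb{F}[t])}\mathcal{B}(\mathbb{F}[t])$. The only step deserving explicit care is this telescoping of the infinite tail of the amalgam into $\mathcal{B}(\mathbb{F}[t])$; every other ingredient is routine once Proposition \ref{prop stab} is in hand.
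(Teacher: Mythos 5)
Your proposal is correct and follows essentially the same route as the paper, which simply declares the theorem ``immediate from the previous discussion and Bass-Serre theory'': you lift the quotient ray to $\rf(\infty)$, use the nested stabilizers from Proposition \ref{prop stab} to identify $\bigcup_n G_{v_n}$ with $\mathcal{B}(\mathbb{F}[t])$, and telescope the infinite amalgam exactly as Serre does for Nagao's theorem. The details you supply (the edge stabilizers $G_{e_n}=G_{v_{n-1}}$ for $n\geq 2$, the identification $G_{e_1}=G(C)\cap\mathcal{B}(\mathbb{F}[t])$, and the collapse of the tail via $A*_A B=B$) are precisely the ones the paper leaves implicit.
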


\subsection{An example where \texorpdfstring{$L/K$}{L/K} is unramified at \texorpdfstring{$P$}{P}}

As in the last example, we let $C=\mathbb{P}_{\mathbb{F}}^1$ with $K=\mathbb{F}(C)=\mathbb{F}(t)$. Let $P$ be the point at infinity of $C$, and let
$\phi:\mathbb{P}_{\mathbb{F}}^1\rightarrow\mathbb{P}_{\mathbb{F}}^1$ be the automorphism satisfying $t\circ\phi=1/t$.
Since the extension $\mathbb{F}(\sqrt{a-t})/K$ is unramified at $P_0=\phi(P)$, and non-split when $a \in \mathbb{F} \smallsetminus \mathbb{F}^2$, we see that $L=\mathbb{F}(\sqrt{a-1/t})=K(\sqrt{at^2-t})$ is unramified and non-split at $P$. In particular $D\cong\mathbb{P}^1_\mathbb{F}$. Then $A=\mathbb{F}[t]$, $d=\deg(P)=1$ and $B=\mathbb{F}[t,\sqrt{at^2-t}]$. Moreover, in our context we have $K_P=\mathbb{F}((1/t))$, $L_Q=K_P(\sqrt{a-1/t})$, $\mathcal{O}_P=\mathbb{F}[[1/t]]$ and $\mathcal{O}_Q=\mathcal{O}_P[\sqrt{a-1/t}]$. In all that follows we fix the uniformizing parameter $\pi=1/t$ of $K_P$ and $L_Q$, and set $\kappa_Q=\mathbb{F}(\sqrt{a})\cong\mathcal{O}_Q/\pi\mathcal{O}_Q$. 

Note that $\mathrm{Pic}(B) = \mathbb{Z}/2\mathbb{Z}$, since $L$ is the function field of a plane quadratic curve, which is rational, and the point at infinity has degree $2$. In particular, the quotient graph $\overline{X}$ has only two cusps by Proposition \ref{prop cusps}. We claim that these two cusps correspond to $\cf(\infty)=\cf(0,0)$ and $\cf(0,v)$ for $v=\sqrt{a-1/t}\in L$. Indeed, note first that $\tr(v)=0$ and hence $(0,v)\in H(L,K)$. Then, using Proposition \ref{prop cusps} (and its proof), it suffices to see that the isotropic lines generated by $(1,0,0)$ and $(0,v,1)$ do not lie in the same $G(A)$-orbit, which amounts to show that the $B$-ideal
\[J=\{\lambda\in L\mid \lambda(0,v,1)\in B^3\subset L^3\}=B \cap \frac{1}{v} B,\]
is not principal. We have $t, \sqrt{at^2-t} \in B \cap \frac{1}{v}B$. So $M:=tB+\sqrt{at^2-t}B$ is contained in $B \cap \frac{1}{v}B$. Note that $B \cong \mathbb{F}[t,y]/(y^2-at^2+t)$, whence we get $B/M \cong \mathbb{F}[t,y]/(t,y,y^2-at^2+t) \cong \mathbb{F}[t,y]/(t,y) \cong \mathbb{F}$. This implies that $M$ is maximal. And since $1 \notin J$, we conclude that $J=M$.

Now, if $J=zB$ for some $z \in B$, then $z$ divides $t$ and $\sqrt{at^2-t}$. In particular, $N(z)$ divides $t^2$ and $at^2-t$, whence $N(z)$ divides $t$. If $N(z)$
has degree 1, we contradict the fact that the extension is unramified at infinity. 
We conclude that $N(z) \in \mathbb{F}^{*}$, in particular $z \in B$ is an invertible element and whence $J=zB=B$. This is a contradiction, which concludes the proof of our claim.\\

Having our explicit cusps at hand, we need to compute the numbers $N(0,0)$ and $N(0,v)$. As in the previous example, we get the identities $\widetilde{\mathfrak q_{0,0}}=I(0,0)=B$,
$$\mathrm{Stab}(0,0,n)= \left\lbrace \mathrm{u}_{-a}(x,y)\widetilde{a}(t): (x,y) \in H(L,K)_{B}, \, \, t \in \mathbb{F}^{\times}, \, \,  \omega(y) \geq -n \right\rbrace ,$$
and $M(0,0,n)=B[\frac n2]$. We conclude in the same way that $N(0,0)=0$. This implies that the tip of $\cf(0,0)$ is $\overline{\lambda_1(0,0)}$. On the other hand, we have already seen that $\widetilde{\mathfrak q_{0,v}}=B \cap \frac{1}{\bar v}B$ has degree 1, as it is a maximal ideal
with quotient $\bb F$. We obtain, then, that $N_0(0,v)$ is equal to 2. Following, thus, the explicit computations in the proof of Proposition \ref{prop stab}, we conclude, 
for any $n>2$, the identity
$$
\mathrm{Stab}(0,v,n)= \Big\{  g_{0,v}^{-1}\mathrm{u}_{a}(x,y) \widetilde{a}(t) g_{0,v} : (x,y) \in H(L,K)_{B}, \, t \in \mathbb{F}^{\times}, \,  \omega(y) \geqslant -n \Big\}.
$$
Therefore, the group containing all unipotent elements of $\mathrm{Stab}(0,v,n)$ is
$$ U(0,v,n)= \Big\{ g_{0,v}^{-1} \mathrm{u}_{a}(x,y) g_{0,v}: \, (x,y) \in H(L,K) \cap (B \times B), \,  \omega(y) \geqslant -n \Big\}.$$
In particular, we can fix $I(0,v)=B$. Moreover, as in the previous example we can show $M(0,v,n)=B[\frac n2]$. In particular, this implies that $N_1(0,v)=N_0(0,v)=2$. Thus,
\[N(0,v)= \max\left\lbrace  \frac{2}{f_Pd}(\deg(I(0,v))+2g_D-1), N_1(0,v)\right\rbrace=2,\]
and thus the tip of $\cf(0,v)$ is $\overline{\lambda_3(0,v)}$.\\

With all this, we are only left with the study of the ``body of the spider'', that is, the subgraph $Y$ of Theorem \ref{principal result}. This is however quite dependent on the base field $\bb F$ (already the valency of $\overline{\lambda_0(\infty)}$, computed in \S\ref{section lambda 0 infty}, illustrates this), so we will stop our analysis here. Note however that we already know that $Y$ must contain both,
$\overline{\lambda_0(\infty)}=\overline{\lambda_0(0,0)}$ and $\overline{\lambda_i(0,v)}$, for $i=0,1,2$, and that $\lambda_0(0,v)=\lambda_0(\infty)$. Indeed, it suffices to note that $g_{0,v}$, which satisfies $\lambda_0(0,v)=g_{0,v}^{-1} \cdot \lambda_0(\infty)$, is actually contained in the stabilizer of $\lambda_0(\infty)$.\\

We may summarize our findings by stating that $\overline{X}$ is obtained by attaching a graph $Y$ attached via three points ($\overline{\lambda_i(0,v)}$, for $i=0,1,2$) to a graph that is isomorphic to an appartment of the Bruhat-Tits tree $X$ (it is actually the image of the appartment $g_{0,v}^{-1}\bb A$, where $\bb A$ is the standard appartment).  Moreover, if $\bb F$ is finite, then the subgraph $Y$ is finite. On the other hand, when $\mathbb{F}=\mathbb{R}$, then $\overline{\lambda_0(0,0)}=\overline{\lambda_0(0,v)}$ need 
not be attached to $Y$, as its valency was proven to be 2 
in \S\ref{section lambda 0 infty}.
This is described by Figure \ref{Figure unramified case}.

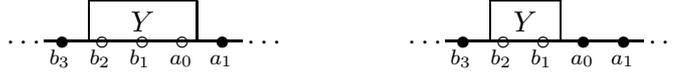
\begin{figure}[h!]
\linethickness{0.4pt}
\ifx\plotpoint\undefined\newsavebox{\plotpoint}\fi 
\begin{picture}(46.5,12.25)(-100,0)
\put(21,5){\framebox(40,15)[cc]{$Y$}}
\put(4,5){\line(1,0){74}}
\put(8,2){$\bullet$}
\put(6,-2){${}_{b_3}$}
\put(23,2){$\circ$}
\put(21,-2){${}_{b_2}$}
\put(38,2){$\circ$}
\put(36,-2){${}_{b_1}$}
\put(53,2){$\circ$}
\put(51,-2){${}_{a_0}$}
\put(68,2){$\bullet$}
\put(66,-2){${}_{a_1}$}
\put(-10,2){$\cdots$}
\put(80,2){$\cdots$}
\end{picture}
\begin{picture}(46.5,12.25)(-200,0)
\put(21,5){\framebox(26,15)[cc]{$Y$}}
\put(4,5){\line(1,0){74}}
\put(8,2){$\bullet$}
\put(6,-2){${}_{b_3}$}
\put(23,2){$\circ$}
\put(21,-2){${}_{b_2}$}
\put(38,2){$\circ$}
\put(36,-2){${}_{b_1}$}
\put(53,2){$\bullet$}
\put(51,-2){${}_{a_0}$}
\put(68,2){$\bullet$}
\put(66,-2){${}_{a_1}$}
\put(-10,2){$\cdots$}
\put(80,2){$\cdots$}
\end{picture}
\caption{The quotient graph $\overline{X}$ of Example 12.2 for arbitrary $\bb F$ (left), and for $\bb F=\bb R$ (right). Here $a_i=\lambda_i(0,0)$ and $b_i=\lambda_i(0,v)$.}\label{Figure unramified case}
\end{figure}

The following result is then immediate from Bass-Serre Theory (cf.~\cite[Chap. I, \S 5, Theo. 13]{S}) and our analysis of stabilizers of vertices in \S\ref{sec stab}.

\begin{Thm}
Assume that $\mathbb{F}$ is finite. Set $C=\mathbb{P}_{\mathbb{F}}^1$, $K=\mathbb{F}(C)=\mathbb{F}(t)$, $L=\mathbb{F}(\sqrt{a-1/t})$ and let $P$ be the 
point at infinity in $C$. Let $G(\mathbb{F}[t])$ be the abstract group consisting of the $\mathbb{F}[t]$-points of the special unitary group $G$ defined from the quadratic extension $L/K$ and the form $h_K$. Then, there exist: 
\begin{itemize}
    \item a finitely generated subgroup $\cal H$ of $G(\mathbb{F}[t])$; and
    \item two finite subgroups $\mathcal{B}_1, \mathcal{B}_2$, defined as the respective intersection of $\cal H$ with $$ \mathcal{P}_1= \mathcal{U}_{a}(\mathbb{F}[t]) \rtimes \mathcal{T}(C)=\Big\{ \mathrm{u}_{a}(x,y) \widetilde{a}(t) : (x,y) \in H(L,K)_{B}, \, t \in \mathbb{F}^{\times} \Big\}, $$
    and 
    $$\mathcal{P}_2= g_{0,v}^{-1}(\mathcal{U}_{a}(\mathbb{F}[t]) \rtimes \mathcal{T}(C)) g_{0,v}=\Big\{  g_{0,v}^{-1}\mathrm{u}_{a}(x,y) \widetilde{a}(t) g_{0,v} : (x,y) \in H(L,K)_{B}, \, t \in \mathbb{F}^{\times} \Big\},$$
\end{itemize}
such that $G(\mathbb{F}[t])$ is the sum of $H$, $\mathcal{P}_1$ and $\mathcal{P}_2$, amalgamated along $\mathcal{B}_1$ and $\mathcal{B}_2$.
\end{Thm}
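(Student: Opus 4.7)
The plan is to invoke Theorem \ref{main thm amalgam} (i.e.~Theorem \ref{thm amalgam}) and identify each piece of the resulting amalgam with one of the groups in the statement. Since $\mathbb{F}$ is finite, the last assertion of Theorem \ref{principal result} guarantees that $\overline{X}$ is combinatorially finite, so the general amalgamation theorem applies: $G(\mathbb{F}[t])$ is the sum of $\mathcal{H}$ (the fundamental group of the graph of groups on $Y$) with the cusp groups $\mathcal{G}_\sigma$ for $\sigma\in\mathrm{Pic}(B)$, amalgamated along the vertex stabilizers $G_{v_\sigma}$. The preceding discussion in the example already established $\mathrm{Pic}(B)=\mathbb{Z}/2\mathbb{Z}$, with the two cusps represented by $\mathfrak{c}(0,0)$ (trivial class) and $\mathfrak{c}(0,v)$ with $v=\sqrt{a-1/t}$ (non-trivial class). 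The theorem thus reduces to identifying $\mathcal{G}_{\sigma_0}$ with $\mathcal{P}_1$, $\mathcal{G}_{\sigma_1}$ with $\mathcal{P}_2$, and consequently $G_{v_{\sigma_i}}$ with $\mathcal{B}_i$.

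For the trivial cusp, instead of $\mathfrak{r}(0,0)$ I would use the equivalent representative $\mathfrak{r}(\infty)=\mathrm{s}\cdot\mathfrak{r}(0,0)$, which descends to the same cusp in $\overline{X}$ since $\mathrm{s}\in G(A)$. With this choice, $\mathcal{G}_{\sigma_0}$ is simply the stabilizer in $G(A)$ of the visual limit of $\mathfrak{r}(\infty)$, namely $\mathcal{B}(K)\cap G(A)$. A direct matrix computation, combined with the observation $B^\times=\mathbb{F}^\times$ (valid because $B$ is the ring of regular functions on $D\cong\mathbb{P}^1_{\mathbb{F}}$ minus a single point), shows that this intersection equals exactly $\mathcal{U}_a(\mathbb{F}[t])\rtimes\mathcal{T}(C)=\mathcal{P}_1$.

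For the non-trivial cusp, Proposition \ref{prop stab}.2 gives $\mathcal{G}_{\sigma_1}=\mathrm{Stab}(0,v)=\bigcup_{n>2}\mathrm{Stab}(0,v,n)$, and the explicit formula for $\mathrm{Stab}(0,v,n)$ established in the example (together with the computation $I(0,v)=B$) yields, after taking the union over $n$, that
\[\mathcal{G}_{\sigma_1}=\{g_{0,v}^{-1}\mathrm{u}_a(x,y)\widetilde{a}(t)g_{0,v}\colon (x,y)\in H(L,K)_B,\; t\in\mathbb{F}^\times\}=\mathcal{P}_2.\]
The subtle point here is that although $g_{0,v}\in G(K)\smallsetminus G(A)$, conjugation of any $\mathcal{P}_1$-element by $g_{0,v}^{-1}$ nevertheless lands in $G(A)$; this is precisely the content of $I(0,v)=B$ via Proposition \ref{prop stab}.4, which is specific to this example and is the main technical check in the argument. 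Without it, one would only get an inclusion of $\mathcal{P}_2$ into a larger subset of $g_{0,v}^{-1}\mathcal{B}(K)g_{0,v}$.

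The identification $\mathcal{B}_i=\mathcal{H}\cap\mathcal{P}_i=G_{v_{\sigma_i}}$ is then immediate from the Bass--Serre description: $v_{\sigma_i}$ is by construction the unique common vertex of $Y$ and $\mathfrak{c}(\sigma_i)$, so any element lying both in $\mathcal{H}$ and in $\mathcal{P}_i=\mathcal{G}_{\sigma_i}$ must stabilize $v_{\sigma_i}$, and conversely. Finiteness of the $\mathcal{B}_i$ and finite generation of $\mathcal{H}$ are inherited directly from the last paragraph of Theorem \ref{thm amalgam}. The principal obstacle is the choice of lift for the trivial cusp: using $\mathfrak{r}(0,0)$ directly would yield $g_{0,0}^{-1}\mathcal{B}(K)g_{0,0}\cap G(A)$, i.e.~the opposite (lower-triangular) Borel, which would not match the description of $\mathcal{P}_1$.
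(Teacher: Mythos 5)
Your proposal is correct and takes essentially the same route as the paper, whose proof is precisely the one-line combination of Bass--Serre theory (Theorem~\ref{thm amalgam}, applicable since $\bb F$ finite makes $\overline{X}$ combinatorially finite) with the example's computations of the two cusps, the stabilizers $\mathrm{Stab}(0,0,n)$ and $\mathrm{Stab}(0,v,n)$, and $I(0,v)=B$. Your explicit choice of the lift $\rf(\infty)=\mathrm{s}\cdot\rf(0,0)$ for the trivial cusp correctly resolves a detail the paper leaves implicit, namely that the example computes $\mathrm{Stab}(0,0,n)$ in lower-triangular form via $\mathrm{u}_{-a}$ while $\mathcal{P}_1$ is stated in upper-triangular form, the two being conjugate by $\mathrm{s}\in G(A)$.
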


\section*{Acknowledgements}
The authors would like to thank an anonymous referee for his or her thorough reading of this paper and his or her comments.

The first author's research was partially supported by ANID via FONDECYT Grant N\textsuperscript{o} 1200874. The second author's research was partially supported by CONICYT/ANID via the Doctoral fellowship N\textsuperscript{o} 21180544.
The third author was partially supported by the GeoLie project (ANR-15-CE40-0012, The French National Research Agency).
The fourth author's research was partially supported by ANID via FONDECYT Grant N\textsuperscript{o} 1210010.

\end{document}